\newcommand{\cc}{\mathbb C}
\newcommand{\zz}{\mathbb Z}
\newcommand{\qq}{\mathbb Q}
\newcommand{\rr}{\mathbb R}
\newcommand{\la}{\langle}
\newcommand{\ra}{\rangle}
\newcommand{\lra}{\longrightarrow}
\newcommand{\hra}{\hookrightarrow}
\newcommand{\al}{\alpha}
\newcommand{\be}{\beta}
\newcommand{\ga}{\gamma}
\newcommand{\de}{\delta}
\newcommand{\Del}{\Delta}
\newcommand{\ep}{\epsilon}
\newcommand{\vp}{\varpi}
\newcommand{\lam}{\lambda}
\newcommand{\Lam}{\Lambda}
\newcommand{\hd}{\mathrm{hd}}
\def\Ddots{\mathinner{\mkern1mu\raise\p@
\vbox{\kern7\p@\hbox{.}}\mkern2mu
\raise4\p@\hbox{.}\mkern2mu\raise7\p@\hbox{.}\mkern1mu}}
\newcommand{\B}{\mathcal{B}}
\renewcommand{\bm}{{\bf m}}
\newcommand{\bn}{{\bf n}}
\newcommand{\zi}{z_{\i}(b_\bullet)}
\renewcommand{\i}{\underline{i}}
\DeclareMathOperator{\ind}{ind}
\DeclareMathOperator{\G}{G}
\DeclareMathOperator{\Ind}{Ind}
\DeclareMathOperator{\SL}{SL}
\DeclareMathOperator{\Lie}{Lie}
\DeclareMathOperator{\val}{val}
\newcommand{\fg}{\mathfrak g}
\newcommand{\fh}{\mathfrak h}
\newcommand{\fs}{\mathfrak{s}}
\newcommand{\fu}{\mathfrak{u}}
\newcommand{\calo}{\mathcal{O}}
\newcommand{\Gn}{\overline{\mathrm{G}}^{(n)}}
\newtheorem{Thm}{Theorem}[section]
\newtheorem{Prop}[Thm]{Proposition}
\newtheorem{Algo}[Thm]{Algorithm}
\newtheorem{Lem}[Thm]{Lemma}
\newtheorem{Cor}[Thm]{Corollary}
\newtheorem{Conj}[Thm]{Conjecture}
\theoremstyle{definition}
\newtheorem{Def}[Thm]{Definition}
\theoremstyle{remark}
\newtheorem{Rem}[Thm]{Remark}
\theoremstyle{definition}
\title{Resonant Mirkovi\'{c}-Vilonen polytopes and formulas for highest-weight characters}
\author{Spencer Leslie}
\date\today
\address{Department of Mathematics, Duke University, Durham, NC 27710, USA}
\email{lesliew@math.duke.edu}
\subjclass[2010]{Primary 20G25 ; Secondary 22E50, 33C15}
\keywords{ $p$-adic reductive groups, spherical Whittaker functions, total positivity, crystal bases, Mirkovi\'{c}-Vilonen polytopes, resonance, Tokuyama's formula}
\begin{document}

\begin{abstract}
Formulas for the product of an irreducible character $\chi_\lam$ of a complex Lie group and a deformation of the Weyl denominator as a sum over the crystal $\B(\lam+\rho)$ go back to Tokuyama. We study the geometry underlying such formulas using the expansion of spherical Whittaker functions of $p$-adic groups as a sum over the canonical basis $\B(-\infty)$, which we show may be understood as arising from tropicalization of certain toric charts that appear in the theory of total positivity and cluster algebras. We use this to express the terms of the expansion in terms of the corresponding Mirkovi\'{c}-Vilonen polytope.

In this non-archimedean setting, we identify resonance as the appropriate analogue of total positivity, and introduce \emph{resonant Mirkovi\'{c}-Vilonen polytopes} as the corresponding geometric context. Focusing on the exceptional group $G_2$, we show that these polytopes carry new crystal graph structures which we use to compute a new Tokuyama-type formula as a sum over $\B(\lam+\rho)$ plus a geometric error term coming from finitely many crystals of resonant polytopes.

\end{abstract}

\maketitle

\tableofcontents
\section{Introduction}\label{Section: introduction}
\subsection{Overview}
Let $\chi_\lam$ be the character of an irreducible reprensentation $V(\lam)$ of highest-weight $\lam$ of a complex Lie group $G^\vee$. Tautologically, one may write $\chi_\lam$ as a sum over the Kashiwara crystal graph $\B(\lam)$: for $\tau$ in a maximal torus $T^\vee\subset G^\vee$ 
\[
\chi_\lam(\tau)=\sum_{\nu\in\B(\lam)}\tau^{\mathrm{wt}(\nu)},
\]
where $wt:\B(\lam)\lra X^\ast(T^\vee)$ is the weight map of the crystal graph. Remarkably, one may also write $\chi_\lam$ as a sum over $\B(\lam+\rho)$, where $\rho$ is the Weyl vector. Indeed, several such formulas are known going back to Tokuyama, who gave such an expression in type $A$ by studying certain statistics of strict Gelfand-Tsetlin patterns with top row $\lam+\rho$. In each case, one must introduce a \emph{deformation of the Weyl denominator} to modify $\chi_\lam$. By the Casselman-Shalika formula, values of spherical Whittaker functions $W_\tau$ of the $p$-adic group $G$ dual to $G^\vee$ can be expressed in terms of such characters, and in fact the same deformation appears naturally in this context. It is thus natural to seek a direct connection between Whittaker functions and crystal graphs. 

The goal of this paper is to develop and analyze, using ideas from geometric representation theory, the connection between geometry and the values of Whittaker functions. We begin by considering the formal expansion of these functions as a sum of integrals indexed by Mirkovi\'{c}-Vilonen (MV) cycles first introduced in \cite{McN}, which is related to a construction of Braverman, Finkelberg and Gaitsgory \cite{BravGaits, BFG}. Using Lusztig's parametrization, MV cycles may be put in bijection with the canoncial basis $\B(-\infty)$ of the upper part of the quantized enveloping algebra (see Section \ref{Sec: canonical} below), so that $W_\tau$ is a formal sum over an infinite crystal graph. Indeed, McNamara reproved Tokuyama's result in type $A$ by reducing to a sum over $\B(\lam+\rho)$, and conjectured that this was possible for a general group $G$. We will show that this is false in general, and that the failure of this to hold reveals new structures on MV polytopes (see \cite{anderson2003polytope}).

We show that this expansion may be understood as arising from the tropicalization of certain toric charts that appear in the theory of total positivity and cluster algebras. These toric charts are geometric liftings of Lusztig's parametrization of $\B(-\infty)$, and by introducing them we take advantage of the theory of generalized minors to explicitly compute the terms arising in the expansion in terms of the corresponding MV polytope. Our computation reveals that, in general, not only does the sum over $\B(-\infty)$ not reduce to $\B(\lam+\rho)$, but is not even a finite sum: there are infinitely many MV polytopes with non-trival contributions. Moreover, we also find that the sum \emph{cannot} be reduced via cancellation to $\B(\lam+\rho)$.

We explain this behavior in terms of \emph{resonance}. This notion has combinatorial origins in the literature on metaplectic Whittaker functions, but we reinterpret it geometrically from the point of view of total positivity: given a positive structure on a variety $X$, one tropicalizes $X$ with respect to a toric chart $\mathbb{G}_m^N\hra X$ by restricting to ``positive loops'' $\mathbb{G}_m^N(\rr_{>0}((t)))$ and taking valuations. The benefit of this restriction is that rational maps then tropicalize by the na{i}ve rules
\[
\val(x+y) = \min\{\val(x),\val(y)\}, \quad \val(xy)=\val(x)+\val(y).
\]
In the $p$-adic setting, this first rule fails on sets of positive measure when $\val(x)=\val(y)$, and this leads to resonance when studying $p$-adic integrals. Our perspective is that the behavior of these integrals may be lifted to additional structure on the relevant MV polytopes. To make this precise, we introduce the notion of a resonant MV polytope to give a geometric framework.

To illustrate these ideas, we consider Whittaker functions on the exceptional group $G_2$. In this case, we construct new crystal graph structures on the set of (infinitely many) resonant MV polytopes that rely on the resonant properties. We then use this structure to reduce the sum over $\B(-\infty)$ to a sum over $\B(\lam+\rho)$ plus a geometric error term coming from finitely many crystals of resonant polytopes, proving a new geometric Tokuyama-type formula. This gives the first example of new results on canonical bases  proved by considering the connection with $p$-adic Whittaker functions.

Next, we present our main results precisely.

\subsection{Whittaker functions, MV cycles, and Tokuyama-type formulas}  Let $G$ be a simply-connected Chevalley group over a non-archimedean local field $F$ with ring of integers $\calo$ and uniformizer $\vp$. Fix a split maximal torus $T\subset G$ and a Borel subgroup $B\supset T$. These choices determine the set of roots $\Phi=\Phi(G,T)$ and positive roots $\Phi^+$, with simple roots $\Delta$. One also has the Langlands dual groups $T^\vee\subset B^\vee\subset G^\vee$, where $G^\vee$ is the simple complex Lie group with dual root system and $T^\vee$ is a maximal torus dual to $T$. For each $\tau\in T^\vee$, one obtains a character of $B$ which is trivial on $B\cap K$, where $K\subset G$ is a maximal compact subgroup of $G$. We define the associated spherical function $\varphi_\tau:G\lra \cc$ by
\[
\varphi_\tau(bk)=\left(\delta^{1/2}\tau\right)(b),
\]
where $b\in B$ and $k\in K$, and $\de$ is the modular quasi-character of $B$. This is sufficient to define $\varphi_\tau$ by the Iwasawa decomposition $G=BK$. If $\psi:U\lra \cc^\times$ is a generic character, we are interested in the spherical Whittaker function of $G$
\begin{equation}\label{eqn: whittaker intro}
W_\tau(g) := \displaystyle \int_{U}\varphi_\tau(w_0ug)\psi(u)du,
\end{equation}
where $w_0$ is the long element of the Weyl group of $G$. These objects are ubiquitous in several areas of mathematics, and have been a central subject of research for several decades. For example, the explicit evaluation of $W_\tau$ is a crucial step in the Langlands-Shahidi method for studying automorphic $L$-functions. Additionally, if we replace the group $G$ with a metaplectic cover $\widetilde{G}$, these are precisely the $p$-parts of certain Weyl group multiple Dirichlet series \cite{BBF1,ChintaGunnells, McNCS}.  We recall the theory of metaplectic Whittaker functions in Section \ref{Section: Whittaker}.%

The Iwasawa decomposition implies that $W_\tau$ is completely determined by its values on the image of $\vp$ under the dominant coweight $\lam:\mathbb{G}_m\lra T$, which we denote by $\vp^\lam$. By duality, $\lam$ corresponds to the irreducible representation $V_\lam$ of $G^\vee$ with highest weight $\lam$. The famous formula of Casselman and Shalika \cite{CassShalika} connects these two, stating that
\begin{equation}\label{Casselman-Shalika}
W_\tau(\vp^\lam)=\prod_{\al\in \Phi^+}\left(1-q^{-1}\tau^\al\right)\chi_\lam(\tau),
\end{equation}
where $q=|\calo/\vp|$, and $\chi_\lam$ is the character of the irreducible representation $V_\lam$. 

As stated above, Tokuyama independently studied the product on the right-hand side, giving a combinatorial computation in type $A$ for it as a sum over {strict Gelfand-Tsetlin patterns} with top row $\lam+\rho$ \cite{Tok}. This may be interpreted \cite{BBF1} as a sum over the Kashiwara crystal graph $\B(\lam+\rho)$ of highest weight $\lam+\rho$:
\begin{equation}\label{Tokuyama's formula}
\prod_{\al\in \Phi^+}\left(1-q^{-1}z^\al\right)\chi_\lam(z)= \sum_{\nu\in \B(\lam+\rho)}G_{std}(\nu)z^{wt(\nu)-w_0(\lam+\rho)},
\end{equation}
where $G_{std}$, known as the standard contribution function, is defined in terms of the crystal graph structure of $\B(\lam+\rho)$. This formula has ties to several areas in representation theory \cite{GuptaRoyvanPaski,BBFybe} and may be viewed as a discrete analogue of the connection between archimedean Whittaker functions and {geometric crystals} \cite{chhaibi2013littelmann,Lam}.

Finding generalizations of this formula for other groups is a difficult problem in combinatorial representation theory that has seen much recent interest \cite{HamelKing,friedlander2015crystal,defranco}. Viewed as identities in the polynomial ring $\cc[q^{-1}][T]$ where $q^{-1}$ is a formal parameter, each case expresses the product (\ref{Casselman-Shalika}) as a sum over $\B(\lam+\rho)$, though with increasingly more complex replacements for the contribution function $G_{std}$. 

In \cite{McN}, McNamara related Whittaker functions to crystals more directly by connecting the computation of $W_\tau(\vp^\lam)$ to the geometric construction of crystal bases via \emph{Mirkovi\'{c}-Vilonen (MV) cycles}. More precisely, for any Chevalley group $G$, let $\i=(i_1,\cdots,i_N)$ is a reduced expression of the long element $w_0$ of the Weyl group. Then there is a decomposition (depending on $\i$) of the unipotent radical of the Borel 
\begin{equation}\label{eqn: decomp intro}
U=\bigsqcup_{{\bf m}\in \zz_{\geq0}^N}C^{\i}(\bf m),
\end{equation}
where $\bm$ is defined by taking valuations of certain coordinates $\{t_\al,w_\al\}_{\al\in \Phi^+}$. The key point is that the same decomposition arises in the study of MV cycles and polytopes \cite{Kam1}: the data $\bm$ corresponds to the \textbf{$\i$-Lusztig data} of the MV polytope. With this the Whittaker function (\ref{eqn: whittaker intro}) may be expressed formally as a sum of integrals
\begin{equation}\label{eqn: whittaker sum}
W_\tau(\vp^\lam)=\sum_{\bm\in \B(-\infty)}I_\lam(\bm)
\end{equation}
indexed by (stable) MV polytopes of $G$, or more representation-theoretically the canonical basis  $\B(-\infty)$, where
\begin{equation}\label{intro mv int}
I_\lam(\bm)=\int_{C^{\i}(\bm)}\varphi_\tau(w_0u\vp^\lam)\psi(u)du.
\end{equation}

The reason this is formal is that while the Casselman-Shalika formula tells us that $W_\tau(\vp^\lam)$ is a polynomial, it is not at all clear that the sum on the right is finite. Indeed, essentially nothing is known about the support of this sum or the evaluations of the individual terms in general. However, in the special case $G=\SL(r)$ and for the lexicographically minimal reduced expression of $w_0$, McNamara shows that
\begin{equation}\label{eqn: vanishing}
I_\lam(\bm)=0 \text{  unless  }(\i,\bm)\in \B(\lam+\rho),
\end{equation}
and recovers Tokuyama's formula term by term.  Thus, the connection between spherical Whittaker functions and the theory of MV polytopes recovers Tokuyama's formula as a special case, so it is thus natural to study this expansion for other classes of algebraic groups and long words.
\begin{Rem}
Similar results have been achieved in special cases using the theory of Fourier coefficients of Eisenstein series, including a new Tokuyama-type formula in type $B$ in \cite{ FZ16}. By considering metaplectic Eisenstein series, crystal-graph theoretic expressions for the $p$-parts of Weyl group multiple Dirichlet series for types $A$ \cite{BBF1, BBF} and $C$ \cite{FZ15,gray2017metaplectic} have also been established. However, in all these cases the approach is ultimately combinatorial and the connection of this work to MV cycles is not clear.
\end{Rem}
\subsection{Main results}
Fix a dominant coweight $\lam=\sum_i\lam_i\Lam_i^\vee$, where $\Lam_i^\vee$ is the dual basis to the simple roots, and set $-w_0\lam=\sum_i\lam^\ast_i\Lam^\vee_i$.  While the expansion (\ref{eqn: whittaker sum}) directly relates spherical Whittaker function to $\B(-\infty)$, the determination of those basis elements for which $I_\lam(\bm)\neq0$ and more generally the explicit computation of $I_\lam(\bm)$ are highly non-trivial. McNamara conjectured \cite[Remark 8.5]{McN} that the behavior (\ref{eqn: vanishing}) in the type $A$ case is general: that is, for any dominant weight $\lam$, 
\begin{equation}\label{conj mcnamara}
I_\lam(\bm)=0 \text{  unless  }(\i,\bm)\in \B(\lam+\rho).
\end{equation}
for any Chevalley group $G$ and reduced expression $\i$. This would constitute a vast generalization of Tokuyama's formula even in type $A$, where this is known only for a single long word. 

We will show that this is false in general: there can be infinitely many canonical basis elements $(\i,\bm)\notin \B(\lam+\rho)$ such that $I_\lam(\bm)\neq0$. Moreover, these ``external terms'' need not cancel out, so that it is not even true that the sum (\ref{eqn: whittaker sum}) reduces to a sum over $\B(\lam+\rho)$. From our perspective, this explains the increasingly complicated nature of the contribution functions $G_{std}$: the geometric expansion of (\ref{Casselman-Shalika}) in terms of crystal graphs is not naturally a sum over $\B(\lam+\rho)$ but over some larger set. 

As noted above, the central obstruction here is \emph{resonance}, which we introduce \emph{resonant MV polytopes} to give a geometric context (see Figure \ref{fig:test2intro} for an example for $G_2$). Fix an MV polytope $M$ of weight $(0,\mu)$ and $\i$-Lusztig data $\bm$, and consider the associated integral $I_\lam(\bm)$.  Expressing this integral in terms of $M$ is already a difficult problem: while the term $\varphi_\tau(w_0u\vp^\lam)$ appearing in (\ref{intro mv int}) is easy to compute, expanding $\psi(u)$ in terms of $(\i,\bm)$ is highly intricate and requires new tools. Write
$$\psi(u)=\psi\left(\mathfrak{s}_1(u)+\cdots+\mathfrak{s}_r(u)\right),$$ where $\mathfrak{s}_i:U\lra  U_{\al_i}\cong F$ is the natural projection onto the $i^{th}$ simple root group. Using the connection to MV cycles, McNamara showed \cite[Thm 7.8]{McN} that $(\i,\bm)\in B(\lam+\rho)$ if and only if $\val(\fs_i( u))\geq-\lam_i^{\ast}-1$ for all $u\in C^{\i}(\bm)$. If these valuations are constant on $C^{\i}(\bm)$, the integral is more well-behaved. Conceptually, the MV polytope $M$ is \emph{resonant} with respect to $\i$ if these valuations vary on positive-measure subsets of $C^{\i}(\bm)$, though we need to express this in terms of $M$. 

To do this, we must compute $\fs_i(u)$ in terms of the natural coordinates $\{t_\al,w_\al\}_{\al\in \Phi^+}$ defining the ``cell'' $C^{\i}(\bm)\subset U$. In fact, we will show that for ``most'' polytopes $M$, these coordinates are geometrically encoded by certain embeddings of an algebraic torus
\[
\mathbb{G}_m^N\hookrightarrow U;
\] this is Theorem \ref{Thm: geom algo}. With this, we apply the theory of generalized minors as developed by Berenstein, Fomin, and Zelevinsky \cite{BZ97, BZ01, BFZ} to express the functions $\mathfrak{s}_i(u)$ as Laurent polynomials with {positive} integer coefficients and monomials indexed by certain paths in the weight polytope of the corresponding fundamental representation $V({\Lam_i)}$. These paths are $\i$-trails, a representation-theoretic tool introduced by Berenstein and Zelevinsky \cite{BZ01}; see Section \ref{Section: Prelim} for their definition. 
  \begin{Thm}\label{Thm: intro character sum}
  Let $u\in C^{\i}(\bm)$ such that $m_k>0$ for all $k$. For each $i$, the function $\mathfrak{s}_i(u)$ is a Laurent polynomial in $\{w_\al\}$ with positive integer coefficients.
This Laurent polynomial is a sum over $\underline{i}$-trails $\pi: \Lam_i\lra w_0s_i\Lam_i,$
\[
\mathfrak{s}_i(u)=\sum_{\pi: \Lam_i\lra w_0s_i\Lam_i}d_\pi {w_1^{k_1}\cdots w_N^{k_N}},
\]
where $k_i\in \zz$ is determined by $\pi$, $d_\pi\in \zz_{>0}$ (see Prop \ref{Prop: character sum} for more details).
  \end{Thm}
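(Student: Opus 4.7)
The plan is to realize $\mathfrak{s}_i$ as a generalized matrix coefficient on $U$ in the sense of Berenstein-Fomin-Zelevinsky, and then invoke their formula expressing such matrix coefficients on a toric chart as a positive sum over $\underline{i}$-trails. First I would use Theorem \ref{Thm: geom algo} to parametrize the locus $\{u \in C^{\underline{i}}(\bm) : m_k > 0 \text{ for all } k\}$ by the toric chart $\mathbb{G}_m^N \hookrightarrow U$ with coordinates $\{w_\alpha\}_{\alpha \in \Phi^+}$. This produces a factorization $u = x_{i_1}(a_1) x_{i_2}(a_2) \cdots x_{i_N}(a_N)$, in which each $a_k$ is an explicit Laurent monomial in the $\{w_\alpha\}$ with integer exponents read off from the chart.

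Next, I would realize $\mathfrak{s}_i(u)$ as a matrix coefficient of $u$ acting on the fundamental representation $V(\Lambda_i)$. Since $\mathfrak{s}_i:U \to U_{\alpha_i}$ factors through the abelianization $U/[U,U] \cong \bigoplus_j U_{\alpha_j}$, it is a character of $U$ and hence realized by an appropriate pairing of weight vectors in $V(\Lambda_i)$, with endpoints $\Lambda_i$ (the highest weight) and the extremal weight $w_0 s_i \Lambda_i$.

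I would then invoke the Berenstein-Zelevinsky trail formula \cite{BZ01}: on the standard chart $x_{\underline{i}}(t_1,\ldots,t_N) = x_{i_1}(t_1)\cdots x_{i_N}(t_N)$, one has
\[
\mathfrak{s}_i\bigl(x_{\underline{i}}(t_1,\ldots,t_N)\bigr) = \sum_{\pi:\, \Lambda_i \to w_0 s_i \Lambda_i} d_\pi\, t_1^{c_1(\pi)} \cdots t_N^{c_N(\pi)},
\]
where the sum runs over $\underline{i}$-trails in $V(\Lambda_i)$, $d_\pi \in \zz_{>0}$, and the exponents $c_k(\pi) \in \zz_{\geq 0}$ are determined directly from the trail. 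Substituting $t_k = a_k(\{w_\alpha\})$ from the first step and collecting exponents yields the desired Laurent expansion in $\{w_\alpha\}$, with the asserted exponents $k_i$ computed as the sum of the $c_k(\pi)$ weighted by the monomial exponents of each $a_k$.

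The main obstacle I anticipate is step two: precisely pinning down the matrix coefficient in $V(\Lambda_i)$ that represents $\mathfrak{s}_i$, so that the endpoints of the $\underline{i}$-trails are exactly $\Lambda_i$ and $w_0 s_i \Lambda_i$ (rather than some other extremal pair related by duality or Weyl twists). Once this identification is settled, positivity of $d_\pi$ follows from \cite{BZ01}, and positivity of the final expression in $\{w_\alpha\}$ is automatic, because each $a_k$ is a single monomial (not a sum), so no cancellation can occur when substituting into the BZ formula and each trail contributes precisely one monomial summand in Prop \ref{Prop: character sum}.
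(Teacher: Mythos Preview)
Your Step~1 misreads Theorem~\ref{Thm: geom algo}. That theorem does \emph{not} produce a factorization $u = x_{i_1}(a_1)\cdots x_{i_N}(a_N)$ with each $a_k$ a Laurent monomial in the $w_\alpha$. What it gives is that $u = z_{\i}(b_\bullet) = \psi_{w_0}(y_{\i}(b_\bullet))$ with $b_k = w_k^{-1}$: the element with the simple product form is $y_{\i}(b_\bullet)$, and $u$ is its image under the Berenstein--Zelevinsky twist $\psi_{w_0}$. The twist is a subtraction-free birational map but is \emph{not} monomial, so if you write $u$ itself in a toric chart $y_{\i}$ or $x_{\i}$, the resulting coordinates are genuine rational functions of the $w_\alpha$, not monomials. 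Consequently your Step~3 fails: applying the trail formula to $x_{\i}(a_\bullet)$ and substituting would introduce denominators and sums inside each $a_k$, destroying both the ``one monomial per trail'' claim and the positivity argument in your final paragraph.

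There is a secondary but related confusion in Step~2: the direct realization of $\mathfrak{s}_i$ on $U^+$ as a generalized minor is $\Delta_{\Lambda_i,\, s_i\Lambda_i}$, with endpoints $\Lambda_i$ and $s_i\Lambda_i$ (a single simple-root step), not $\Lambda_i$ and $w_0 s_i\Lambda_i$. Applied directly to a product $x_{\i}(t_\bullet)$, this minor gives a single monomial, not the Laurent polynomial in the statement. The endpoints $\Lambda_i \to w_0 s_i\Lambda_i$ appear only \emph{after} passing through the twist. The paper's argument (Proposition~\ref{Prop: character sum}) handles this by never factoring $u$: it sets $u = \eta_{w_0}(z)$ with $z = x_{\i^{*\mathrm{op}}}(b_\bullet^{\mathrm{op}})$, then uses the Gauss decomposition to express
\[
\Delta_{\Lambda_i,\, s_i\Lambda_i}(u) \;=\; \frac{\Delta_{s_i\Lambda_i,\, w_0\Lambda_i}(z)}{\Delta_{\Lambda_i,\, w_0\Lambda_i}(z)},
\]
and only then applies Theorem~\ref{Thm 5.8} to each minor on the right, evaluated on the honest product $z$. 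The numerator yields the sum over $\i$-trails $\Lambda_i \to w_0 s_i\Lambda_i$; the denominator is a single monomial (unique trail $\Lambda_i \to w_0\Lambda_i$), which is what makes the answer a Laurent polynomial rather than a polynomial. This ratio structure, coming from the twist, is the missing ingredient in your outline.
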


\begin{Rem}\label{Remark intro}
These toric charts were studied \cite{Lusztigpositivity,BZ97, BZ01, BFZ} in the context of total positivity and led to the notion of a {positive structure}, which has recently played an important role in the theory of cluster algebras and Teichm\"{u}ller theory (see \cite{GoncharovShen}, for example). A positive structure on a variety (or stack) $X$ comes equipped with a functorial notion of {tropicalization}, given by restricting to the Zariski-dense set of \emph{transcendental (or generic)} points of the space of formal loops in $X$ and taking valuations. It is precisely the inability to do this with the nonarchimedean analogue that gives rise to the phenomenon of resonance.
\end{Rem}
 As the set up of \cite{McN} includes covering groups, the results of Section \ref{Section: Application} apply in this generality. As a corollary of Theorem \ref{Thm: intro character sum}, we derive a simple characterization of the when an $\i$-Lusztig datum lies in the highest weight crystal $\B(\lam)$. 

 To extend these results to all Lusztig data, we will augment the algorithm McNamara gives to construct the decomposition (\ref{eqn: decomp intro}) in Section \ref{Section: algorithm} so that for each $i$ there is a Laurent polynomial $h_i(t_\al,w_\al)\in \zz\left[t_\al,w_\al^{\pm1}:\al\in \Phi^+\right]$ such that $\mathfrak{s}_i(u)=h_i(t_\al,w_\al)$. The various cases of the algorithm then correspond to different specializations of the variables; for example, the case of positive Lusztig data corresponds to the diagonal specialization $h_i(t_\al,t_\al)$. In Section \ref{Section: boundary}, we show how to recover a Laurent polynomial $g_i(t_\al,w_\al)$ from this diagonal specialization $h_i(t_\al,t_\al)$ which is in some sense as good an approximation to $h_i$ as the diagonal specialization can give. In particular, Proposition \ref{Prop: some words} tells us that $g_i(t_\al,w_\al)=h_i(t_\al,w_\al)$ for certain words. One interesting such case when is the exceptional group $G_2$ with the long word $\i=(2,1,2,1,2,1)$, where $\al_1$ is the long simple root and $\al_2$ is the short simple root; even in this low-rank case, working directly with the algorithm for the Iwasawa decomposition is prohibitive. We illustrate these computations in Section \ref{Section: G2 integrals} by computing the appropriate $\i$-trails in this case and write down the $2$ resulting Laurent polynomials. We conjecture that the polynomials $g_i(t_\al,w_\al)$ are sufficient to compute $I_\lam(\bm)$; see Conjecture \ref{Conjecture}. In Appendix \ref{App: type A}, we verify this in the unique $\i$ for $\SL(4)$ for which $g_i\neq h_i$.

Theorem \ref{Thm: intro character sum} gives a geometric way of understanding for resonance:  we say $M$ is \textbf{resonant} with respect to $\i$ if there are two $\i$-trails $\pi$ and $\pi'$ such that the valuations of the associated monomials $X_\pi,\; X_{\pi'}$ agree. If this is the case, there is a positive-measure subset of $C^{\i}(\bm)$ such that the terms ``resonate'':
\[
\val(X_\pi+X_{\pi'})>\val(X_\pi)+\val(X_{\pi'}).
\]
 Such a resonance corresponds to additional symmetries among the $\i$-Lusztig data of $M$. For the case of $G_2$ and $\i=(2,1,2,1,2,1)$, we find that the resonance of interest is when $m_3=m_5$, where $\bm=(m_1,\ldots,m_6)$ is the $\i$-Lusztig data.
 \begin{figure}

\begin{minipage}{.5\textwidth}
  \centering
  \includegraphics[width=.6\linewidth]{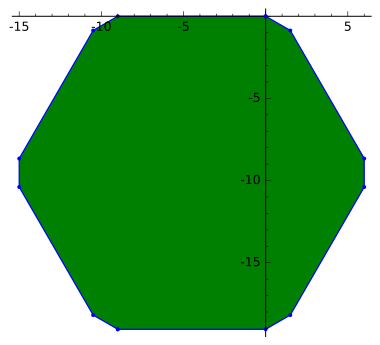}
  \caption{Resonant MV polytope for $G_2$}
  \label{fig:test2intro}
\end{minipage}
\end{figure} See Section \ref{Section: resonance} for a more complete discussion, where we conjecture that the notion of resonance completely encodes the failure of Conjecture \ref{conj mcnamara}.

\begin{Rem}
Resonance originates from the study of Fourier coefficients of metaplectic Eisenstein series (most notably in \cite{BBF1} and \cite{FZ15}). One of our motivations has been to understand the geometry behind this phenomenon. The study of these coefficients is the natural generalization of Langlands-Shahidi method and central to the theory of automorphic forms on covering groups, but requires new methods as these are not Euler products and their local components are mysterious in general \cite{Gaothesis, BBF, BF}. As the Whittaker functions we study are intimately related to these objects, we expect the structures we introduce to shed new light on this important subject.

\end{Rem}

To illustrate the importance of this notion, we consider the case of $G_2$ and $\i=(2,1,2,1,2,1)$ in full detail and, using the results of Section \ref{Section: Application}, compute the integrals $I_\lam(\bm)$. Our motivation is that $G_2$ as it is both complex enough to exhibit many new phenomena as well as small enough to compute in full. As we are primarily interested understanding the geometry of resonant MV polytopes and how it relates to Whittaker functions, we state our results in the non-covering case; the covering group case may also be considered with similar (albeit more complicated) results. We summarize this computation in Theorem \ref{Thm: all together}, which states that inside $\B(\lam+\rho)$, $I_\lam(\bm)$ has a uniform expression in terms of the combinatorics of the crystal graph, whereas $I_\lam(\bm)=0$ outside $\B(\lam+\rho)$ unless $\bm$ has the resonance mentioned above. The upshot is that infinitely many resonant MV polytopes with $\i$-Lusztig data $(\i,\bm)\notin \B(\lam+\rho)$ have non-vanishing contribution.

The resulting power series expansion of the spherical Whittaker function $W_\tau(\vp^\lam)\in\cc[q^{-1}][[T^\vee]]$ must reduce to a polynomial in $\tau\in T^\vee$ by the Casselman-Shalika formula; in particular, there should be systematic cancellation in the power series. Amazingly, we prove this cancellation by constructing a new crystal graph structures on the set of resonant MV polytopes. In Section \ref{Section: resonance families}, we study the family of resonant MV polytopes $RMV(\mu)$ of fixed weight $(0,\mu)$ which contribute to (\ref{eqn: whittaker sum}). We show that there exists additional structure on this set: it may be decomposed into a disjoint union of (truncated) Kashiwara crystal graphs.
\begin{Thm}\label{Thm: crystal intro}
For a fixed weight $\mu$, the set $RMV(\mu)$ of resonant MV-polytopes of weight $\mu$ may be equipped with operations  $$e_1,e_2,f_1,f_2:RMV(\mu)\lra RMV(\mu)\cup \{0\},$$  which decompose the set into a disjoint union of (truncated) Kashiwara crystal graphs of type $A_1\times A_1$. 
\end{Thm}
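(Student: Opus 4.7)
The plan is to identify the structure of $RMV(\mu)$ in Lusztig coordinates and to define the operators by weight-preserving shifts that also respect the resonance condition. For the word $\i=(2,1,2,1,2,1)$, the ordered positive roots are $\be_1=\al_2$, $\be_2=\al_1+3\al_2$, $\be_3=\al_1+2\al_2$, $\be_4=2\al_1+3\al_2$, $\be_5=\al_1+\al_2$, $\be_6=\al_1$. Two identities among these are crucial: $\be_3+\be_5=\be_4$ and $\be_2=\be_6+3\be_1$. Combined with the resonance equality $m_3=m_5$ and the weight formula $-\mu=\sum_k m_k\be_k$, these identities produce two independent ``null shifts''. The first, coming from $\be_3+\be_5=\be_4$, is the simultaneous increment $(m_3,m_4,m_5)\mapsto(m_3+1,m_4-1,m_3+1)$; it preserves the resonance precisely because $m_3$ and $m_5$ are changed by the same amount. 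The second, coming from $\be_2=\be_6+3\be_1$, sends $(m_1,m_2,m_6)\mapsto(m_1-3,m_2+1,m_6-1)$, leaving $m_3,m_4,m_5$ untouched so that resonance is automatic.

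Taking $f_2$ to be the first shift and $f_1$ the second, with $e_i=f_i^{-1}$ returning $0$ whenever the shift would produce a negative entry, I would verify the Kashiwara-style axioms. Weight and resonance preservation are built in by construction. The relations $e_if_i=\mathrm{id}$ on the subset where $f_i$ is defined (and dually) are immediate from the explicit inverse formulas. Since $f_1$ and $f_2$ act on disjoint subsets of the coordinates $(m_1,\ldots,m_6)$, the operators for the two simple roots trivially commute, producing the type $A_1\times A_1$ commutativity required. The internal $A_1$-weights and the $\varphi_i,\ep_i$ statistics can then be read off directly from the string lengths.

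To extract the decomposition, observe that the sum $s:=m_3+m_4$ is invariant under all four operators. For each fixed $\mu$ and each $s\ge 0$, the positivity constraints $m_k\ge 0$ cut out a finite rectangle of admissible Lusztig data, and this rectangle is exactly the connected component through any $\bm\in RMV(\mu)$ with $m_3+m_4=s$. Writing $-\mu=A\al_1+B\al_2$, the rectangle has explicit dimensions $(s+1)\times(\min(\lfloor(B-3s)/3\rfloor,A-2s)+1)$, so each component is a truncated $A_1\times A_1$ crystal graph in the Kashiwara sense, the ``truncation'' referring to the finite lengths of the two $A_1$-strings. Letting $s$ range over the nonnegative integers for which this rectangle is nonempty produces the disjoint-union decomposition claimed in the theorem.

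The key conceptual step is the identification of the two shifts; they are not arbitrary but are forced by the two independent linear relations among the ordered $G_2$ positive roots that are compatible with $m_3=m_5$. Once these are found, verifying the crystal axioms is routine. The importance of this combinatorial structure lies downstream: it provides precisely the organization needed to manifest the cancellation that reduces the Whittaker-side sum over $\B(-\infty)$ to $\B(\lam+\rho)$ plus the geometric error term appearing in the main Tokuyama-type theorem.
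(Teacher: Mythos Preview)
There is a genuine gap: you have used the wrong notion of weight. For MV polytopes the weight is $\mu=\sum_k m_k\gamma_k^\vee$ in the \emph{coroot} lattice (see the formula $\mathrm{wt}(\bm)=\sum_\al m_\al\al^\vee$ in the paper), and since $G_2$ is not simply laced this differs from the root-lattice weight $\sum_k m_k\gamma_k$ that your relations $\gamma_3+\gamma_5=\gamma_4$ and $\gamma_2=\gamma_6+3\gamma_1$ preserve. Your shifts therefore do not stay inside $RMV(\mu)$. The correct ``null shifts'' come from the coroot identities $\gamma_2^\vee+\gamma_6^\vee=\gamma_4^\vee$ and $\gamma_3^\vee+\gamma_5^\vee=3\gamma_4^\vee$, and these are exactly what the paper uses: $e_1$ sends $(m_2,m_4,m_6)\mapsto(m_2-1,m_4+1,m_6-1)$ and $e_2$ sends $(m_3,m_4,m_5)\mapsto(m_3-1,m_4+3,m_5-1)$, with $m_1$ fixed throughout. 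Note in particular that $m_1$ is an invariant of the paper's families, not a coordinate that gets shifted.

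This is not merely a cosmetic mismatch. First, the set $RMV(\mu)$ consists of resonant data that actually contribute to the Whittaker sum, which imposes the parity constraint $s_3=s_4\in 2\zz_{\le 0}$; the paper's operators change $s_3=s_4$ by $-2$ (hence preserve parity and increment the decoration $k$ by one), whereas your $f_1$ changes $s_3$ by $-1$ and so leaves the set. Second, and more decisively, the downstream cancellation requires that the monomial $x_1^{k_1}x_2^{k_2}=\tau^{\mu(\bn)}$ be constant on each family so that one can factor it out and sum the coefficients $G_{res}(\bn)$ to zero; with your operators the coroot weight varies across a component, so no such cancellation is possible. Your instinct to look for two commuting weight-preserving shifts compatible with $m_3=m_5$ is exactly right, but the shifts must be coroot-weight-preserving, which forces the paper's choices.
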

These crystal graphs look like parallelograms with the height and width given by the highest weight with each vertex representing to a resonant MV polytope and the edges corresponding to Kashiwara operators defined using the additional symmetries of resonant polytopes. For example, if the highest weight is $(2,3)$ the crystal will look like Figure \ref{Figure 2} while a truncated crystal is only contained in such a crystal graph as in Figure \ref{Figure 3}. 
 \begin{figure}
\centering
\begin{minipage}{.4\textwidth}\centering
\begin{tikzpicture}[
  every node/.style = {
    draw, circle, 
    inner sep = 1pt
  },
  thick arrow/.style = {
    ->, -latex, 
    ultra thick
  },
  thin arrow/.style = {
    ->, -stealth', 
  },
  ]

  \node (L0) {};

  \node[below left = of L0] (L1 0){};
  \node[below right = of L0] (L1 1){};

  \node[below left = of L1 0] (L2 0){};
  \node[below right= of L1 0] (L2 1){};
  \node[below right= of L1 1] (L2 2){};

  \node[below right = of L2 0] (L3 1){};
  \node[below right = of L2 1] (L3 2){};
   \node[below right = of L2 2] (L3 3){};
  \node[below right = of L3 1] (L4 0){};
  \node[below right = of L3 2] (L4 1){};
%
  \node[below right = of L4 0] (L5 0){};
%
%

  \draw[thin arrow] (L0) -- (L1 0);
  \draw[thick arrow] (L0) -- (L1 1);

  \draw[thin arrow] (L1 0) -- (L2 0);
  \draw[thick arrow] (L1 0) -- (L2 1);
  \draw[thin arrow] (L1 1) -- (L2 1);
   \draw[thick arrow] (L1 1) -- (L2 2);
%
  \draw[thick arrow] (L2 0) -- (L3 1);
  \draw[thin arrow] (L2 1) -- (L3 1);
  \draw[thick arrow] (L2 1) -- (L3 2);
    \draw[thin arrow] (L2 2) -- (L3 2);
      \draw[thick arrow] (L2 2) -- (L3 3);
%

  \draw[thick arrow] (L3 1) -- (L4 0);
  \draw[thin arrow] (L3 2) -- (L4 0);
  \draw[thick arrow] (L3 2) -- (L4 1);
\draw[thin arrow](L3 3)--(L4 1);
  \draw[thick arrow] (L4 0) -- (L5 0);
  \draw[thin arrow] (L4 1) -- (L5 0);
%
%

\end{tikzpicture} \captionof{figure}{$A_1\times A_1$  crystal graph}\label{Figure 2}\end{minipage}
\begin{minipage}{.4\textwidth}\centering

\begin{tikzpicture}[
  every node/.style = {
    draw, circle, 
    inner sep = 1pt
  },
  thick arrow/.style = {
    ->, -latex, 
    ultra thick
  },
  thin arrow/.style = {
    ->, -stealth', 
  },
  ]

  \node (L0) {};

  \node[below left = of L0] (L1 0){};
  \node[below right = of L0] (L1 1){};

  \node[below left = of L1 0] (L2 0){};
  \node[below right= of L1 0] (L2 1){};
  \node[below right= of L1 1] (L2 2){};

  \node[below right = of L2 0] (L3 1){};
  \node[below right = of L2 1] (L3 2){};
   \node[below right = of L2 2] (L3 3){};
%
%
%


  \draw[thin arrow] (L0) -- (L1 0);
  \draw[thick arrow] (L0) -- (L1 1);

  \draw[thin arrow] (L1 0) -- (L2 0);
  \draw[thick arrow] (L1 0) -- (L2 1);
  \draw[thin arrow] (L1 1) -- (L2 1);
   \draw[thick arrow] (L1 1) -- (L2 2);

  \draw[thick arrow] (L2 0) -- (L3 1);
  \draw[thin arrow] (L2 1) -- (L3 1);
  \draw[thick arrow] (L2 1) -- (L3 2);
    \draw[thin arrow] (L2 2) -- (L3 2);
      \draw[thick arrow] (L2 2) -- (L3 3);
%

%
%
\end{tikzpicture} \captionof{figure}{Truncated crystal}\label{Figure 3}\end{minipage}
\end{figure}
Here the thin arrows indicate the first lowering Kashiwara operator $f_1$ and the thick arrows indicate the second $f_2.$
We call these crystals \emph{resonance families}. If $\bm$ is the resonant Lusztig datum that is the highest weight element of the associated resonance family, we write $RF(\bm)$ for this crystal graph. Proposition \ref{Prop: empty intersection} tells us that only those finitely-many families such that $RF(\bm)\cap\B(\lam+\rho)\neq \emptyset$ are truncated crystal graphs: the crystal is ``cut off'' at the intersection. See Theorem \ref{Thm: crystal structure} for a precise statement in terms of seminormality of the crystal graph structure on $RF(\bm)$ and the combinatorial decoration indicating which crystals are truncated.
 \begin{Rem}
 We note that this crystal structure is {not} derived from the Kashiwara crystal structure on $\B(-\infty)$ as these crystal graphs are contained in a single weight space of $\B(-\infty)$. 
 \end{Rem}
 This is a purely representation-theoretic statement about certain families of MV polytopes of type $G_2$ that are distinguished by these $p$-adic integrals. Note that the characterization of MV polytopes in terms of $\i$-Lusztig data \cite{Kam1} requires transcendental techniques: one must restrict to \emph{generic} elements of the affine Grassmanian to pass to valuations in order for the data to satisfy the tropical Pl\"{u}cker relations; this is the tropicalization functor mentioned in Remark \ref{Remark intro}. In contrast, the inability to do this in the $p$-adic setting isolates resonant MV polytopes as interesting objects of study. In particular, this crystal graph structure encodes the systematic cancellation needed to reduce the sum (\ref{eqn: whittaker sum}) to a finite sum.
 \begin{Thm}\label{Thm: external vanishing intro}
Let $RF(\bm)$ be the resonance family such that $RF(\bm)\cap\B(\lam+\rho)=\emptyset$. Then $RF(\bm)$ has the structure of a (full) Kashiwara crystal of type $A_1\times A_1$ and
\[
\sum_{\bn\in RF(\bm)}I_\lam(\bn)=0.
\]
\end{Thm}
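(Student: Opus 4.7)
The plan is to exploit the $A_1\times A_1$ crystal structure on $RF(\bm)$ to split the sum into two nested one-dimensional sums, each of which cancels via a Gauss sum identity. First I would parametrize $RF(\bm)$ explicitly using the Kashiwara operators $e_1,e_2,f_1,f_2$ from Theorem \ref{Thm: crystal intro}. Writing $(p,q)$ for the highest weight of the $A_1\times A_1$ crystal, label the elements $\bn_{a,b}=f_1^a f_2^b \bm$ with $(a,b)\in\{0,\ldots,p\}\times\{0,\ldots,q\}$. The hypothesis $RF(\bm)\cap\B(\lam+\rho)=\emptyset$ together with Proposition \ref{Prop: empty intersection} guarantees the crystal is full (not truncated), so the entire rectangle of indices participates in the sum.

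Next, using Theorem \ref{Thm: intro character sum} and the explicit Laurent polynomials $g_1,g_2$ computed for $G_2$ with $\i=(2,1,2,1,2,1)$ in Section \ref{Section: G2 integrals}, I would compute $I_\lam(\bn_{a,b})$ term by term. Because every $\bn_{a,b}$ is resonant and the resonance relation $n_3=n_5$ is preserved by both $f_1$ and $f_2$, the $p$-adic integration of $\psi(\fs_i(u))$ produces genuine Gauss sums rather than the clean geometric-series valuations of the non-resonant case. The structural claim to verify is a factorization
\[
I_\lam(\bn_{a,b})=C(\lam,\bm)\cdot\alpha_a\cdot\beta_b,
\]
in which $C(\lam,\bm)$ collects common factors, $\alpha_a$ encodes the Gauss-sum contribution shifted by $f_1$, and $\beta_b$ encodes the shift by $f_2$. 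This reflects the tensor-product structure of $A_1\times A_1$ at the level of integrands: the $\i$-trails of Theorem \ref{Thm: intro character sum} responsible for the resonance in $\fs_1$ and $\fs_2$ decouple along the two simple-root directions. Given the factorization, one applies the standard Gauss sum vanishing $\sum_{k}\int_{\vp^{-k}\calo^\times}\psi(x)\,dx=0$ (valid when the range of $k$ crosses the threshold where $\psi$ becomes nontrivial on the relevant shell) separately in each variable; the hypothesis $RF(\bm)\cap\B(\lam+\rho)=\emptyset$ is precisely what ensures this threshold lies strictly inside $[0,p]$ and $[0,q]$, so that $\sum_a\alpha_a=0$ and the entire double sum vanishes.

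The main obstacle will be the factorization step. The operators $e_i,f_i$ on $RF(\bm)$ are \emph{not} induced from the usual $\B(-\infty)$ structure (as emphasized in the remark after Theorem \ref{Thm: crystal intro}); they are defined via the new symmetries of resonant polytopes. Establishing the multiplicative form therefore requires unpacking how each Kashiwara operator shifts the coordinates $\{t_\al,w_\al\}$ defining the cell $C^{\i}(\bn_{a,b})$, and then checking via the $\i$-trail expansion of Theorem \ref{Thm: intro character sum} that the two resonant Gauss-sum factors arising in $\fs_1$ and $\fs_2$ depend only on $a$ and $b$ respectively. Once this decoupling is verified, the Gauss sum cancellation is formal, and the disjoint-union decomposition of $RMV(\mu)$ in Theorem \ref{Thm: crystal intro} ensures no cross-family interference.
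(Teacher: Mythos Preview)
Your overall plan---parametrize $RF(\bm)$ as $\{f_1^{t_1}f_2^{t_2}\bm\}$ using Proposition \ref{Prop: empty intersection}, plug in the closed-form evaluation of $I_\lam(\bn)$, and cancel via a nested sum---is exactly the route the paper takes. But the key structural claim you stake everything on, the clean factorization $I_\lam(\bn_{a,b})=C(\lam,\bm)\,\alpha_a\,\beta_b$, is false in general and this is where your argument breaks.

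Concretely, by Theorem \ref{Thm: all together} one has $I_\lam(\bn)=q^{-k(\bn)}(1-q^{-1})G(s_1,n_1)G(s_2,n_2)G(s_5,n_5)G(s_6,n_6)x^{\kappa}$. Under $f_1^{t_1}f_2^{t_2}$ the pair $(s_6,n_6)$ depends only on $t_1$ and $(s_5,n_5)$ only on $t_2$, which is promising; but the cross term $G(s_2-t_2,\,m_2+t_1)$ couples both variables, since its second argument detects whether $t_1=0$ while its first argument moves with $t_2$. When the head has $m_2<m_6$ (so $s_2>s_5$), this term happens to be constant in $t_2$ over the range $0\le t_2\le s_5+1$, and the inner $t_2$-sum telescopes to zero; symmetrically for $m_2>m_6$. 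In the \emph{totally resonant} case $m_2=m_6=0$, however, $s_2=s_5$ and the coupling is genuine: for fixed $t_1$ the inner $t_2$-sum does \emph{not} vanish (it equals $q^{s_5-k}$, $q^{t_1+s_5-k}(1-q^{-1})$, or $-q^{s_6+s_5-k}$ depending on $t_1$), and one must then sum these over $t_1$ to obtain zero. So the paper's proof splits into three cases according to the sign of $m_2-m_6$; your tensor-product ansatz collapses two of them but misses the third.

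A smaller point: the cancellation is not a Gauss-sum threshold argument but the elementary telescoping identity $\sum_{t=0}^{s+1}q^{t}G(s-t,t)=1+(q^{s}-1)-q^{s}=0$, which holds precisely because the full root string $0\le t\le s+1$ is present. The hypothesis $RF(\bm)\cap\B(\lam+\rho)=\emptyset$ enters only to guarantee (via Proposition \ref{Prop: empty intersection}) that the crystal is not truncated, so that this full range is available.
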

 
 For those finitely many truncated crystals such that $RF(\bm)\cap\B(\lam+\rho)\neq \emptyset$, these contributions cannot be canceled. In this sense, the geometric Tokuyama-type formula for $G_2$ is not naturally a sum over $\B(\lam+\rho)$. One may attempt to incorporate these terms systematically to reduce to a sum over $\B(\lam+\rho)$, and in most cases there is a natural way to do this by appropriately augmenting the contributions of those Lusztig data $\bn\in RF(\bm)\cap\B(\lam+\rho)$. However, in the case of a \emph{totally resonant} resonance family, there does not appear to be a canonical way to do this. A family $RF(\bm)$ is totally resonant if $\bm=(a,b,c,b,a)$. 

For a $\lam$-relevant resonance family $RF(\bm)$, we set $RF(\bm)^\circ =RF(\bm)-(RF(\bm)\cap\B(\lam+\rho))$. We prove the following ``geometric'' \textbf{Tokuyama-type formula for $G_2$} (see Section \ref{Section: generalized tok} for all notation).
\begin{Thm}\label{Thm: tokuyamatype intro}
Let $\lam\in X^\ast(T)^+$ be a dominant weight for the split complex Lie group of type $G_2$ and let $\chi_\lam$ be its character. Then for $\tau\in T(\cc)$,
\begin{align*}
\prod_{\al\in \Phi^+}\left(1-q^{-1}\tau^\al\right)\chi_\lam(\tau)=\sum_{\bn\in \B(\lam+\rho)}&\tilde{G}(\bn)\tau^{\mu(\bn)-w_0(\lam+\rho)}\\
&+\sum_{\shortstack{$RF(\bm)$ \\$\lam$-$\mathrm{relevant}$\\{totally resonant}}}\left(\sum_{\bn\in RF(\bm)^\circ} G_{res}(\bn)\right)\tau^{\mu(\bn)-w_0(\lam+\rho)},
\end{align*}
where the contribution functions $\tilde{G}(\bn)$ and $G_{res}(\bn)$ are described in Section \ref{Section: generalized tok}.

\end{Thm}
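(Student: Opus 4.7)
The plan is to start from the Casselman--Shalika identity (\ref{Casselman-Shalika}), which rewrites the left-hand side as the spherical Whittaker value $W_\tau(\vp^\lam)$, and then apply the formal expansion (\ref{eqn: whittaker sum}) for the fixed reduced expression $\i=(2,1,2,1,2,1)$:
\[
W_\tau(\vp^\lam)=\sum_{\bm\in\B(-\infty)}I_\lam(\bm).
\]
The task then reduces to evaluating each $I_\lam(\bm)$ and reorganizing the formal infinite sum into the two finite sums appearing in the statement.

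The first computational step is to evaluate each $I_\lam(\bm)$ in closed form. The factor $\varphi_\tau(w_0u\vp^\lam)$ depends only on the valuations of the coordinates $\{t_\al,w_\al\}$ parametrizing $C^\i(\bm)$ and produces the weight monomial $\tau^{\mu(\bm)-w_0(\lam+\rho)}$ times an explicit power of $q^{-1}$. For the Whittaker factor $\psi(u)$, I would substitute the Laurent polynomial expressions for $\mathfrak{s}_i(u)$ furnished by Theorem \ref{Thm: intro character sum}, which for $G_2$ with this choice of $\i$ are made completely explicit by the $\i$-trails enumerated in Section \ref{Section: G2 integrals} (invoking Proposition \ref{Prop: some words} to identify the polynomials $g_i$ with the full $h_i$). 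Integration then reduces to a finite sequence of one-dimensional Tate-style integrals against $\psi$. For non-resonant $\bm$, the minimum valuation among the $\i$-trail monomials is attained uniquely and the integrals evaluate cleanly; in particular, by McNamara's support criterion (\ref{eqn: vanishing}), $I_\lam(\bm)=0$ whenever such a non-resonant $\bm$ lies outside $\B(\lam+\rho)$. For resonant $\bm$, coincidences among these valuations yield the more intricate expressions that will ultimately define $G_{res}$.

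Having computed the individual integrals, I would partition the formal sum into three classes: non-resonant $\bm\in\B(\lam+\rho)$, non-resonant $\bm\notin\B(\lam+\rho)$ (which vanish), and resonant $\bm$ grouped into the crystal families $RF(\bn)$ of Theorem \ref{Thm: crystal intro}. Theorem \ref{Thm: external vanishing intro} immediately kills every family with $RF(\bn)\cap\B(\lam+\rho)=\emptyset$, so only finitely many $\lam$-relevant families survive. The non-resonant contributions in $\B(\lam+\rho)$ already form part of the first sum of the claimed formula, and the remaining task is to match the surviving resonance-family contributions to the same indexing set.

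The hard part will be precisely this matching. For each $\lam$-relevant family $RF(\bn)$, the interior vertices $RF(\bn)\cap\B(\lam+\rho)$ carry their own non-resonant contributions to the first sum, while the external vertices in $RF(\bn)^\circ$ carry extra resonant contributions that must be redistributed. Using the explicit $A_1\times A_1$ crystal structure of Theorem \ref{Thm: crystal intro}, I would show case-by-case that for non-totally-resonant families the external contributions can be canonically absorbed into the interior vertex functions via combinatorial identities on the truncated parallelogram; the result of this absorption defines the modified contribution function $\tilde{G}(\bn)$ on $\B(\lam+\rho)$. For the finitely many totally resonant $\lam$-relevant families the absorption procedure fails: the relevant identities break down at the corner degeneration of the parallelogram, and the external sum over $RF(\bn)^\circ$ must be carried as an independent geometric error term with contribution function $G_{res}$. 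Assembling these three pieces yields the stated formula.
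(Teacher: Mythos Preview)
Your proposal is essentially correct and follows the same route as the paper: Casselman--Shalika plus the expansion over $\B(-\infty)$, explicit evaluation of all $I_\lam(\bm)$ for $G_2$ via the $\i$-trail Laurent polynomials (Theorem~\ref{Thm: all together}), elimination of non-$\lam$-relevant resonance families via the $A_1\times A_1$ crystal cancellation (Theorem~\ref{Thm: external vanishing}), and then absorption of the external part of the non-totally-resonant $\lam$-relevant families into $\tilde G$ via the right- and left-hand rules (Propositions~\ref{Prop: right hand rule} and~\ref{Prop: left hand rule}), leaving only the totally resonant error term.

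One correction: you cannot invoke ``McNamara's support criterion (\ref{eqn: vanishing})'' to kill non-resonant $\bm\notin\B(\lam+\rho)$. That equation is a type $A$ result for a single specific word, and the whole point of the paper is that its general analogue (Conjecture~\ref{conj mcnamara}) is \emph{false}. The vanishing of $I_\lam(\bm)$ for non-resonant $\bm$ outside $\B(\lam+\rho)$ in the $G_2$ case is not a citation but an output of the direct integral computation you already outlined (and which the paper records in Theorem~\ref{Thm: all together}). Also, your tripartite partition is slightly off: resonant $\bm$ lying \emph{inside} $\B(\lam+\rho)$ still contribute to the first sum via $G_1(\bm)$, and only those with $s_3=s_4=0$ sit at the decoration-$0$ boundary of a resonance family; the absorption step modifies precisely those boundary contributions, not ``non-resonant'' ones.
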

\begin{Rem}
We note that this \emph{does not} recover the conjectured formula of \cite{friedlander2015crystal} (recently proven in \cite{defranco} with a combinatorial argument) term by term. Their formula was motivated by computer calculations, and it is not clear if there is a geometric interpretation of the augmentation to the standard contribution that they introduce.

\end{Rem} 
We expect the additional structure on the set of resonant MV polytopes in Theorem \ref{Thm: crystal intro} generalizes for arbitrary groups and reduced expressions $\i$. It would be interesting to find a purely representation-theoretic method of isolating this structure as well as which resonances are important. We end with a discussion of this in Section \ref{Section: resonance}.

\subsection{Acknowledgements}
I want to thank Solomon Friedberg for introducing me to questions which led directly to this project, as well as for many helpful conversations. I also wish to thank both Ben Brubaker and Dan Bump for helpful conversations on crystal graphs, Whittaker functions, and other topics on multiple occasions. Finally, I wish to thank the anonymous referee who explained to me the notion of (upper) seminormality and suggesting the correct statement of Theorem \ref{Thm: crystal structure}.

 \section{Canonical bases and Mirkovi\'{c}-Vilonen cycles}\label{Section: Prelim}

Let $\G$ be a simply-connected Chevalley group over a non-archimedean local field $F$. Then $\G$ arises as the $F$-points of a rational group scheme $\mathbb{G}$. We will have reason to consider the complex points of this group scheme, which we will denote by $\G_\cc$. Let $T\subset B^{\pm}\subset \G$ be a choice of split maximal torus $T$ and pair of opposite Borel subgroups $B^{\pm}$ with corresponding unipotent radicals $U^{\pm}$. We also choose these subgroups such that they arise as the $F$-points of $\qq$-sub-group schemes of $\mathbb{G}$. 
Let $X=X^\ast(T)$ be the lattice of characters, and let $Y=X_\ast(T)$ be the lattice of cocharacters of $T\cong F^\times\otimes_\zz Y$, with $\Lam_{++}$ the dominant Weyl chamber determined by $B$. For any $a\in F^\times$ and cocharacter $\mu\in \Lam$, we denote the image in $T$ by $a^\mu$.
Let $\Phi \subset X$ denote the associated reduced root system, $\Phi^+$ and $\Phi^-$ the corresponding sets of positive (resp., negative) roots, and $\Del$ the set of simple roots indexed by the finite set $I$. Let $\Phi^\vee\subset Y$ be the set of coroots, and let
\[
\la\cdot,\cdot\ra : X\times Y \lra \zz
\]
denote the canonical pairing induced by duality. Thus, for $\mu\in X$, and $\lam\in \Lam$, we have $\mu(a^\lam) = a^{\la\mu,\lam\ra}$.

Let $W$ denote the Weyl group associated to the root system $\Phi$. It is generated as a Coxeter group by simple reflections $s_i$, where $i\in I$. For an element $w\in W$, let $l(w)$ denote the length of $w$ as a reduced expression of the reflections $s_i$. Set $R(w)\subset I^{l(w)}$ to be the set of all reduced expressions of $w$: that is, for each $\i = (i_1,\ldots,i_l)\in R(w)$, we have $w=s_{i_1}\cdots s_{i_l}.$ Let $w_0\in W$ be the unique longest element of $W$, and set $l(w_0) = |\Phi^+|=N$. We refer to any $\i\in R(w_0)$ as a \emph{long word}. We denote by $\i^\ast=(i_1^\ast,\ldots,i_N^\ast)$ the dual long word, where $w_0(\al_i)=-\al_{i^\ast}$. As we will see, the choice of a long word $\i$ induces a linear order on the set of positive roots $\Phi^+$. In this context, $\Phi^+$ is in natural bijection with the indexing set $[N]=\{1,2,\ldots,N\}$.

For each simple root $\al_i\in\Del$, we fix an associated root subgroup embedding $x_i:\mathbb{G}_a(F)\lra \G$ such that $a^\lam x_i(t)a^{-\lam}= x_i(a^{\la\al_i,\lam\ra}b)$, for all $a\in F^\times$ and $b\in F$. This pinning induces a unique Chevalley embedding $\varphi_i: \SL_2(F)\lra \G$ such that
\[
\varphi_i\left(\begin{array}{cc}1&b\\ &1\end{array}\right) = x_i(b)\qquad \mbox{and} \qquad\varphi_i\left(\begin{array}{cc}a&\\ &a^{-1}\end{array}\right) = a^{\al_i^\vee}.
\]
Set now 
\[
y_i(b) = \varphi_i\left(\begin{array}{cc}1&\\ b&1\end{array}\right) \qquad \mbox{and}  \qquad \overline{s_i} = \varphi_i\left(\begin{array}{cc}&-1\\1 &\end{array}\right).
\]

The elements $\overline{s_i}$ generate a subgroup of $\G$, $\widetilde{W}$, which is a finite cover of the Weyl group, $W$ such that $\overline{s_i}\mapsto s_i$. For any $w\in W$, and for any $\i\in R(w)$, we have that the element $\overline{w} = \overline{s_{i_1}}\cdots\overline{s_{i_l}}\mapsto w$, where $l(w) = l$. Note that the elements $\overline{s_i}$ satisfy the appropriate braid relations, so that the lift $\overline{w}$ does not depend on the choice of word $\i\in R(w)$.

For any positive root $\al\in\Phi^+$, choose a simple root $\al_i$ and a Weyl group element $w\in W$ such that $\al = w\cdot \al_i$. Then we may define the subgroups
\begin{equation}\label{eqn: root groups}
x_\al(b)= \overline{w}x_i(b)\overline{w}^{-1} \qquad \mbox{and} \qquad y_{\al}(b) = \overline{w}x_i(b)\overline{w}^{-1},
\end{equation}
and set $\overline{s_{\al}} = \overline{w} \overline{s_i}\overline{w}^{-1}$. Note that this choice differs from the pinning in \cite{McN}, and is responsible for the positivity statements in e.g. Proposition \ref{Prop: character sum}.
\subsection{Relations}
We recall here the relations between the various root embeddings defined in the previous section. These are standard for Chevalley groups \cite{Springerbook}.
\begin{itemize}
\item For all $\al,\be\in \Phi$, 
\begin{equation}\label{eqn: relation 1}
a^{\be^\vee}x_\al(b)a^{-\be^\vee} = x_\al(a^{\la\al,\be^\vee\ra}b), \qquad a^{\be^\vee}y_\al(b)a^{-\be^\vee} = y_\al(a^{-\la\al,\be^\vee\ra}b).
\end{equation}
\item Suppose $\al,\be\in \Phi$ with $\al+\be\neq 0$. Then
\begin{equation}\label{eqn: relation 2}
x_\al(a)x_\be(b)x_\al(a)^{-1}= x_\be(b)\left[\prod_{\shortstack{$\scriptstyle i,j\in \zz_{>0}$\\ $\scriptstyle \al+j\be=\ga\in \Phi$}}x_\ga(c_{\al,\be;\i,j}a^ib^j)\right].
\end{equation}
 Our notation requires we write out several cases of this relation depending on the positivity of the roots $\al$ and $\be$; we omit the other cases as their form is identical with $x_\al(b)$ replaced with $y_\al$ and $\al$ replaced with $-\al$.
\end{itemize}
\subsection{Involutions}

Define the ``transpose'' involutive anti-automorphism $x\mapsto x^T$ of $G$ to be
\[
t^T= t,\quad\text{for }t\in T,\qquad x_i(a)^T=y_i(a).
\]
Next, we define the ``positive twist'' to be the involutive anti-automorphism $x\mapsto x^\iota$
\[
t^\iota= t^{-1},\quad\text{for }t\in T,\qquad x_i(a)^\iota=x_i(a).
\]

\subsection{Gauss Decompositions}
Our choice of Borel $B^+$ naturally gives rise to two opposite Bruhat decompositions
\[
\G = \bigsqcup_{w\in W} B^+wB^+ = \bigsqcup_{w\in W} B^-wB^-
\]
On the Zariski-open subset $G_0:=B^+B^-=U^+TU^-$, every element $g$ has a \emph{unique} Gauss decomposition $g=utn$, where $u\in U^+,$ $t\in T$, and $n\in U^-$  . We adopt the notation from \cite{BZ01} and write $u=[g]_+$, $t=[g]_0$, and  $n=[g]_-$.

\subsection{Lusztig's parametrizations of the canonical basis}\label{Sec: canonical}
 Let $\fg$ be the simple complex Lie algebra of the same Cartan type as $G$, and fix a Cartan subalgebra $\fh$. For example, $\fg = \cc\otimes_\qq \fg_0$, where $\fg_0$ is the split rational Lie algebra associated to a Chevalley group. Let $$U_q(\fg) = U_q(\fu^+)\otimes U_q(\fh)\otimes U_q(\fu^-)$$ be the associated quantized universal enveloping algebra, where $\fu^-=\Lie(U^-)$  and $\fu^+=\Lie(U^+)$. One can write down relations between the generators $\{E_i,F_i, K_i\}_{i\in \Delta}$, where $\{\al_i\}_{i\in \Del}$ is the set of simple roots; see \cite{BZ01}.
 
To each $i\in I$, Lusztig associates an algebra automorphism $T_i$ of $U_q(\fg)$. These operators satisfy the braid relations, inducing an action of the braid group on $U_q(\fg)$. Set now $U_q^+= U_q(\fu^+)$ to the be subalgebra generated by the $E_i$. 

\begin{Def}
Fix $\i=(i_1\ldots, i_N)\in R(w_0)$. To each $N$-tuple $\bm=(m_1,\ldots,m_N)\in \zz^N_{\geq 0}$, consider the element
\[
p_{\i}^{(\bm)}:=E_{i_1}^{(m_1)}T_{i_1}(E_{i_2}^{(m_2)})\cdots (T_{i_1}\cdots T_{i_{N-1}}(E_{i_N}^{(m_N)})).
\]
It is shown in \cite[Cor. 40.2.2]{Luzbook} that the collection $\mathcal{B}_{\i}$ of such elements forms a $\cc(q)$-basis for $U_q^+$. This is referred to as the \emph{PBW-type basis} corresponding to $\i$.
\end{Def}

The canonical basis of $U_q^+$ is defined in terms of these PBW-type bases. Denote by $u\mapsto \overline{u}$ the $\cc$-linear involutive algebra anti-automorphism determined by $\overline{q}= q^{-1}$ and $\overline{E_i}=E_i$.

\begin{Prop}
For every $\i\in R(w_0)$ and $t\in \zz^N_{\geq0}$, there is a unique element $b=b_{\i}(\bm)$ of $U_q^+$ such that $\overline{b} = b$, and $b-p_{\i}^{(\bm)}$ is a linear combination of elements of $\mathcal{B}_{\i}$ with coefficients in $q^{-1}\zz[q^{-1}]$. For a fixed $\i$, the elements $b_{\i}(\bm)$ constitute the canonical basis $\mathcal{B}(-\infty)$ of $U_q^+$.
\end{Prop}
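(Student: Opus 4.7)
The plan is to prove three things in sequence. First, that the bar involution acts unitriangularly on the PBW basis $\mathcal{B}_\i$ with respect to a suitable order on $\zz_{\geq 0}^N$. Second, that the existence and uniqueness of $b_\i(\bm)$ follow from a standard Kazhdan--Lusztig-type argument once this triangularity is known. Third, and most importantly, that the resulting basis is independent of the long word $\i$, so that the notation $\mathcal{B}(-\infty)$ is justified.

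For the first step, I would order $\zz_{\geq 0}^N$ by a refinement of the reverse lexicographic order associated to $\i$ --- equivalently, by an order compatible with the convex order on $\Phi^+$ that $\i$ induces. The goal is to prove
\[
\overline{p_\i^{(\bm)}} = p_\i^{(\bm)} + \sum_{\bn \prec \bm} c_{\bn,\bm}(q)\, p_\i^{(\bn)}, \qquad c_{\bn,\bm}(q) \in \zz[q,q^{-1}].
\]
This is an explicit calculation with the quantum root vectors $F_\al := T_{i_1}\cdots T_{i_{k-1}}(F_{i_k})$ and their images under the bar involution, combined with the Levendorskii--Soibelman commutation formulas that control products of quantum root vectors out of convex order.

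For the second step, uniqueness follows from the observation that if $b$ and $b'$ both satisfy the listed conditions, then $d := b - b'$ is bar-invariant and expressible as $\sum_\bn d_\bn(q)\, p_\i^{(\bn)}$ with every $d_\bn \in q^{-1}\zz[q^{-1}]$; the unitriangularity of $\overline{\phantom{x}}$ on $\mathcal{B}_\i$ then forces $d_\bn = \overline{d_\bn}$ by a downward induction on $\prec$, and since $q^{-1}\zz[q^{-1}]$ contains no bar-invariant elements except $0$, we conclude $d = 0$. Existence proceeds by a recursive correction: starting from $p_\i^{(\bm)}$, at each stage one subtracts a unique element of $q^{-1}\zz[q^{-1}] \cdot \mathcal{B}_\i$ to kill the next obstruction to bar-invariance, a process that terminates by finiteness of $\{\bn : \bn \prec \bm\}$.

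The main obstacle is the third step: independence of $\i$. Since any two elements of $R(w_0)$ differ by a finite sequence of braid moves, it suffices to check the assertion under a single braid relation $s_i s_j s_i \cdots = s_j s_i s_j \cdots$. Passing to the rank-two quantized enveloping algebra generated by $\{E_i, F_i, K_i^{\pm 1}, E_j, F_j, K_j^{\pm 1}\}$, one writes down the explicit transition matrix between the two PBW bases attached to the two reduced expressions of the long element of the rank-two Weyl group, checks that this transition is bar-compatible, and then invokes the uniqueness from step two to conclude that $b_\i(\bm) = b_{\i'}(\bm')$ under the corresponding relabeling. This case-by-case rank-two verification in types $A_1 \times A_1$, $A_2$, $B_2$, and $G_2$ is where the essential content of the theorem lies; the $G_2$ case in particular requires the detailed rank-two computation carried out in \cite{Luzbook}. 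Once independence is established, the common basis $\{b_\i(\bm)\}$ is the canonical basis $\mathcal{B}(-\infty)$.
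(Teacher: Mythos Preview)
The paper does not supply a proof of this proposition at all: it is stated as a recall of Lusztig's construction of the canonical basis, with the surrounding text pointing to \cite{Luzbook} for the PBW bases and implicitly for this result as well. Your outline is a faithful sketch of Lusztig's original argument (unitriangularity of the bar involution on the PBW basis, the Kazhdan--Lusztig-type existence and uniqueness mechanism, and the rank-two braid-move reduction for independence of $\i$), so there is nothing to compare against. One small caveat: in your step two the termination of the recursive correction is best justified not by finiteness of $\{\bn:\bn\prec\bm\}$ in $\zz_{\geq 0}^N$ for an arbitrary refinement of reverse-lex, but by the fact that the bar involution preserves weight spaces and each weight space of $U_q^-$ is finite-dimensional; this is the clean way to guarantee the process stops.
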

In this way, the choice of $\i$ induces a bijection $b_{\i} :\zz^N_{\geq0}\lra \mathcal{B}(-\infty)$, which is called the $\i$-Lusztig data parametrization of $\B(-\infty)$. When $\i$ is clear from the context, we will omit it from the terminology. 

\subsection{Toric charts on the unipotent radical}\label{Section: toric}
In this section, we recall certain embeddings of algebraic tori in unipotent radicals. The toric charts play a central role in the study of double Bruhat cells, as in \cite{BZ97} and \cite{BZ01}, as they tropicalize to the various combinatorial parametrizations of canonical bases.
In general, given a pair $u,v\in W$ of Weyl group elements, the intersections $$G^{v,u} =B^-vB^-\cap B^+uB^+,$$ referred to as \emph{double Bruhat cells}, carry important positive structures. In particular, the subsets
\[
L^{v,u} = U^-\overline{v}U^-\cap B^+uB^+,
\] 
known as \emph{reduced double Bruhat cells}, are isolated as the appropriate objects of study in \cite{BZ01}. We consider only the case $L^{e,w_0} = U^-\cap B^+w_0B^+$. 
Explicitly, for a fixed long word $\i\in R(w_0)$, consider the embeddings $y_{\i}:\mathbb{G}_m^N \lra L^{e,w_0}$, given be
\[
y_{\i}(t_1,\ldots,t_N) =y_{i_1}(t_1)\cdots y_{i_N}(t_N).
\]
\ 
For any fixed long word $\i$, we may identify the tropical points $L^{e,w_0}(\zz^t)$ with a lattice $\zz^{|\Phi^+|}$, which naturally contains the cone of $\i$-Lusztig data as the first orthant \cite{BZ01}.


\subsection{Generalized minors}
One of the main tools developed by Berenstein, Fomin, and Zelevinsky to study the positive structures on double Bruhat cells and their tropicalizations is the theory of generalized minors. We recall the definitions and properties we will need, closely following \cite[Section 4]{BZ01}. For $i\in I$, denote by $\Lam_i$ the corresponding fundamental weight. For any $u,v\in W$, the generalized minor $\Del_{u\Lam_i,v\Lam_i}$ is the regular function on $G$ whose restriction to the open subset $\overline{u}G_0\overline{v}^{-1}$ is given by 
\[
\Del_{u\Lam_i,v\Lam_i}(x) = ([\overline{u}^{-1}x\overline{v}]_0)^{\Lam_i}.
\]
That is, $\Del_{u\Lam_i,v\Lam_i}$ is the matrix coefficient in the fundamental representation $V(\Lam_i)$ between the associated extremal weight spaces. It is known that $\Del_{u\Lam_i,v\Lam_i}$ depends only on the weights $u\Lam_i$ and $v\Lam_i$, and not the choice of representatives.

These functions satisfy many properties, which we recall now. Fix weights $\ga, \de\in W\Lam_i$.
\begin{itemize}
\item For elements of the torus $t_1,t_2\in T$, we have
\begin{equation*}
\Del_{\ga,\de}(t_1xt_2) = t_1^\ga t_2^\de\Del_{\ga,\de}(x).
\end{equation*}
\item Generalized minors behave well with respect to the main involutions on $G$:
\begin{equation*}\label{eqn: involutions}
\Del_{\ga,\de}(x) =\Del_{\de,\ga}(x^T) = \Del_{-\de,-\ga}(x^\iota) =\Del_{w_0\de,w_0\ga}(\tau(x))
\end{equation*}
\end{itemize}
While we do not recall the formulas in full generality, we remark that these functions satisfy the so-called generalized Pl\"{u}cker relations. In particular, of $l(us_i) =l(u)+1$ and $l(vs_i)=\l(v)+1$, then these satisfy the following \emph{exchange relation}
\begin{equation*}
\Del_{u\Lam_i,v\Lam_i}\Del_{us_i\Lam_i,vs_i\Lam_i}=\Del_{us_i\Lam_i,v\Lam_i}\Del_{u\Lam_i,vs_i\Lam_i}+\prod_{j\neq i}\Del_{u\Lam_j,v\Lam_j}^{-a_{ji}}
\end{equation*}
This relation may be used to show that generalized minors induce a cluster algebra structure on the rings of regular functions on double Bruhat cells \cite{BFZ}. 

 To evaluate generalized minors on unipotent elements of the form $y_{\i}(b_\bullet)$, we recall the notion of $\i$-trails from \cite{BZ01}.
\begin{Def}
Let $V$ be a finite-dimensional $\fg$-module, $\ga$ and $\delta$ are two weights in $P(V)$, and $\i=(i_1,\ldots, i_l)\in R(w)$ for some $w\in W$. An {\bf$\i$-trail} from $\ga$ to $\de$ in $V$ is a sequence of weights $\pi=(\ga=\ga_0,\ga_1,\ldots, \ga_l=\de)$ such that 
\begin{enumerate}
\item for $1\leq k\leq l$, $\ga_{k-1}-\ga_k = c_k(\pi)\al_{i_k}$ for some non-negative integer $c_k(\pi)$,
\item $e_{i_1}^{c_1(\pi)}\cdots e_{i_l}^{c_l(\pi)}$ is a non-trivial linear map from $V(\de)$ to $V(\ga)$.
\end{enumerate}

We use the notation $\pi: \ga\lra \de$ to indicate that $\pi$ is an $\i$-trail from $\ga$ to $\de$.
\end{Def}
The notion of $\i$-trails  was introduced in \cite{BZ01} to study canonical bases via geometric liftings. In particular, they encode certain generalized minors.

\begin{Thm}\cite[Theorem 5.8]{BZ01}\label{Thm 5.8}
Let $\ga$ and $\de$ be two weights in the $W$-orbit of the same fundamental weight $\Lam_i$ of $\fg$, and let $\i=(i_1,\ldots,i_N)$ be any sequence of indices in $\{1,2,\ldots,r\}$. Then $ \Del_{\ga,\de}(x_{\i}(t_1,\ldots,t_N))$ is a positive integer linear combination of the monomials $t_1^{c_1(\pi)}\cdots t_N^{c_N(\pi)}$ for all $\i$-trails from $\ga$ to $\de$ in $V_{\Lam_i}$.
\end{Thm}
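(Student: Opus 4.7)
The plan is to compute $\Del_{\ga,\de}(x_{\i}(t_1,\ldots,t_N))$ by expressing the generalized minor as a matrix coefficient in the fundamental representation $V_{\Lam_i}$ and matching each surviving monomial with an $\i$-trail.

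First I would reformulate the generalized minor as an honest matrix coefficient. Choose representatives $u,v\in W$ with $\ga=u\Lam_i$ and $\de=v\Lam_i$, and let $v_\ga=\overline{u}\cdot v_{\Lam_i}$, $v_\de=\overline{v}\cdot v_{\Lam_i}$ be the corresponding extremal weight vectors of $V_{\Lam_i}$, with $v_\ga^*$ the element of $V_{\Lam_i}^*$ dual to $v_\ga$ along a weight basis (so it annihilates every weight space other than $V_{\Lam_i}[\ga]$). Since $([g]_0)^{\Lam_i}=\langle v_{\Lam_i}^*,g\cdot v_{\Lam_i}\rangle$ on $G_0$, the defining identity $\Del_{\ga,\de}(x)=([\overline{u}^{-1}x\overline{v}]_0)^{\Lam_i}$ rearranges to
\[
\Del_{\ga,\de}(x)=\langle v_\ga^*,\, x\cdot v_\de\rangle
\]
as regular functions on $G$.

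Next, expanding each root subgroup element via the integral form $x_{i_k}(t_k)=\exp(t_k e_{i_k})=\sum_{c\geq 0}t_k^c\, e_{i_k}^{(c)}$, where $e_{i_k}^{(c)}=e_{i_k}^c/c!$ is the divided power preserving the Kostant $\zz$-form of $V_{\Lam_i}$, and multiplying out yields
\[
\Del_{\ga,\de}(x_{\i}(t_\bullet))=\sum_{(c_1,\ldots,c_N)\in\zz_{\geq 0}^N}t_1^{c_1}\cdots t_N^{c_N}\,\langle v_\ga^*,\, e_{i_1}^{(c_1)}\cdots e_{i_N}^{(c_N)}\cdot v_\de\rangle.
\]
Because $e_{i_k}$ raises weights by $\al_{i_k}$, the coefficient vanishes unless the partial sums $\ga_k:=\de+\sum_{j>k}c_j\al_{i_j}$ all lie in the weight-support of $V_{\Lam_i}$ with $\ga_0=\ga$, and the composition $e_{i_1}^{(c_1)}\cdots e_{i_N}^{(c_N)}\colon V_{\Lam_i}[\de]\to V_{\Lam_i}[\ga]$ is nontrivial. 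Setting $c_k(\pi)=c_k$, this is precisely the data of an $\i$-trail $\pi:\ga\lra\de$, so the monomials with nonzero coefficient are exactly those indexed by such trails.

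The remaining and most delicate step is non-negativity of the coefficient $d_\pi:=\langle v_\ga^*,\, e_{i_1}^{(c_1(\pi))}\cdots e_{i_N}^{(c_N(\pi))}v_\de\rangle$: integrality is automatic from the Kostant $\zz$-form, but positivity is not, and this is where the main obstacle lies. I would resolve it by invoking Lusztig's positivity for the canonical basis of $V_{\Lam_i}$: in that basis, each divided power $e_i^{(c)}$ acts by a matrix with non-negative integer entries, and because extremal weight spaces of a fundamental representation are one-dimensional, the vectors $v_\ga$ and $v_\de$ coincide (up to a common sign determined by the braid-compatible lifts $\overline{u}, \overline{v}$) with canonical basis elements. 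Expanding the intermediate vector in the canonical basis after each divided-power step then exhibits $d_\pi$ as a sum of products of non-negative integers, giving $d_\pi\in\zz_{\geq 0}$ and completing the proof.
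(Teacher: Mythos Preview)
The paper does not prove this theorem; it is quoted from \cite[Theorem 5.8]{BZ01} and used as a black box. Your sketch follows essentially the same strategy as Berenstein--Zelevinsky's original argument: realize the minor as a matrix coefficient in $V_{\Lam_i}$, expand each factor $x_{i_k}(t_k)=\sum_{c\geq 0}t_k^c e_{i_k}^{(c)}$, and identify the surviving monomials with $\i$-trails via the weight-raising bookkeeping. Integrality from the Kostant $\zz$-form is clear, and since $\ga$ and $\de$ are extremal (hence one-dimensional) weight spaces, nonvanishing of the matrix coefficient is equivalent to nontriviality of the composite map, so the bijection with $\i$-trails is as you describe.

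The one place that deserves more care is the parenthetical ``up to a common sign'' in your positivity step. Lusztig's positivity says $e_i^{(c)}$ has non-negative matrix in the canonical basis; to conclude $d_\pi\geq 0$ you need $v_\ga$ and $v_\de$ to be canonical basis elements \emph{on the nose}, not merely up to sign. This is true---with the specific lifts $\overline{w}=\overline{s_{i_1}}\cdots\overline{s_{i_l}}$ fixed in the paper, the extremal vectors $\overline{w}\cdot v_{\Lam_i}$ are canonical basis elements (this is a compatibility between Lusztig's braid group action and the canonical basis, see e.g.\ \cite[Ch.~28]{Luzbook})---but it is not automatic, and a sign discrepancy at either endpoint would flip the conclusion. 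Once that is cited, your argument is complete and matches \cite{BZ01}.
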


In particular, there exist positive integers $d_\pi$ such that
\begin{equation}\label{equation: trails}
\Del_{\ga,\de}(x_{\i}(t_1,\ldots,t_N))=\sum_{\pi:\ga\rightarrow \de}d_\pi t_1^{c_1(\pi)}\cdots t_N^{c_N(\pi)}.
\end{equation}
In general, a formula for these coefficients is not known; they are built out of intricate applications of the commutation relations (\ref{eqn: relation 2}). 


\subsection{Parametrizations of Mirkovic-Vilonen polytopes}
Let $\mathcal{K}=\cc((t))$ be the field of Laurent series with coefficients in $\cc$, and $\calo=\cc[[t]]$ is the ring of power series. Consider the affine Grassmanian $\mathcal{G}r=G(\mathcal{K})/G(\calo)$ with its analytic topology.  
In proving the geometric Satake equivalence \cite{MV07}, Mirkovi\'{c} and Vilonen identify cycles (the MV cycles) of $\mathcal{G}r$ which induce a weight basis in a uniform manner in each rational representation of $\mathbb{G}^\vee$. This geometrically-defined basis is the MV basis. In \cite{Kam1}, Kamnitzer showed how to determine these cycles (or rather free $Y$-orbits of cycles known as stable MV cycles) from the combinatorics of their moment polytopes: these are the MV polytopes. An important step in the proof requires fixing a long word $\i\in R(w_0)$ and parametrizing semi-infinite cells with certain toric charts. By passing to valuations, these toric charts encode $\i$-Lusztig data of a canonical basis element, inducing a bijection between Lusztig's canonical basis $\B(-\infty)$ and the collection of (stable) MV-cycles.

We recall these parametrizations, which depends on the twist map of Berenstein and Zelevinsky \cite{BZ97}. This twist $\eta :L^{e,w_0} \lra L^{e,w_0}$ is the birational map given by 
\[
\eta(u) = [\overline{w_0}u^T]_-.
\]

 It is not hard to check that $\eta^{-1}= \iota\circ\eta\circ\iota$. In particular, if we define the map $\psi_{w_0}=\eta\circ\iota$, then $\psi_{w_0}$ is an involution on $L^{e,w_0}$, which we refer to as the BZ-involution. We now fix a long word $\i$, and recall $l(w_0)=|\Phi^+|=N$. Denote by $z_{\i}(b_\bullet)$ the element 
\[
z_{\i}(b_\bullet):= \psi_{w_0}(y_{\i}(b_1,\cdots, b_N)).
\]

We now fix a long word $\i$, and recall $l(w_0)=|\Phi^+|=N$. In \cite{Kam1}, Kamnitzer defines subsets $A^{\i}(n_\bullet)$, where $(n_\bullet)\in \zz^N_{\geq0}$, in terms of lengths of edges along a certain path (determined by $\i$) in the $1$-skeleton of certain polytopes; these lengths $(n_\bullet)$ are called the $\i$-Lusztig data of the polytope. While this description of $A^{\i}(n_\bullet)$ is crucial to understanding MV-cycles, the following characterization is more important for our purposes.

\begin{Thm}\cite[Theorem 4.5, Theorem 3.1]{Kam1}\label{Thm: An}
With $\i$ and $(n_\bullet)\in \zz^N_{\geq0}$ as above, we have
\[
A^{\i}(n_\bullet)= \{[z_{\i^\ast}(b_\bullet)] \in \mathcal{G}r : b_k\in \mathcal{K}^\times,\;\val(b_k) = n_k\}.\footnote{The dual word arises here due to our definition of the twist map. This choice is motivated by later computations related to the Iwasawa decomposition.}
\]
Moreover, the Zariski-closure $Z^{\i}(n_\bullet) =\overline{A^{\i}(n_\bullet)}$ is an MV-cycle, and each stable MV-cycle has a unique representative cycle (the one of coweight $(0,\mu)$ for some $\mu$) that arises in this way.
\end{Thm}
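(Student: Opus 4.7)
The strategy is to identify two different parametrizations of the stable MV cycle of a given $\i$-Lusztig datum: Kamnitzer's combinatorial description via edge-lengths on the $1$-skeleton of the corresponding MV polytope, and the analytic description as the closure of the image of a toric chart in $U^-(\mathcal{K})$ projected to $\mathcal{G}r$. First I would recall the decomposition of the unipotent radical induced by $\i^\ast \in R(w_0)$: ordering positive roots as $\be_k = s_{i_1^\ast}\cdots s_{i_{k-1}^\ast}\al_{i_k^\ast}$, one obtains a birational isomorphism $y_{\i^\ast}:(\mathbb{G}_m)^N \to U^-$, and this extends naturally to a chart on the loop group $U^-(\mathcal{K})$. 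The point is that the stratification of $U^-(\mathcal{K})$ by the valuations $(\val b_1,\ldots,\val b_N)$ of the coordinates of $y_{\i^\ast}(b_\bullet)$ refines the stratification by semi-infinite cells $S^-_\mu = U^-(\mathcal{K})\cdot t^\mu$.

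The central ingredient is Berenstein--Zelevinsky's twist identity: the map $\eta(u) = [\overline{w_0}u^T]_-$ intertwines the Lusztig-type parametrization $y_{\i^\ast}$ with the parametrization by edge-lengths of the $1$-skeleton. Composing with the positive twist $\iota$ gives the BZ-involution $\psi_{w_0}$. Concretely, using the generalized minor formulas of Theorem \ref{Thm 5.8} and the exchange relations, one shows that for $z_{\i^\ast}(b_\bullet) = \psi_{w_0}(y_{\i^\ast}(b_\bullet))$ the ``hyperplane coordinates'' $\Del_{u\Lam_j,v\Lam_j}$ evaluated at $z_{\i^\ast}(b_\bullet)$, for appropriate choices of $u,v$ reading off successive vertices along the $\i$-path on the polytope, have valuations given by linear combinations of $(\val b_1,\ldots,\val b_N)$ that are inverted by the Lusztig tropical formulas. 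In other words, the $\i$-Lusztig datum of the moment polytope of the $G(\mathcal{O})$-orbit through $z_{\i^\ast}(b_\bullet)$ is precisely $(\val b_1,\ldots,\val b_N)$.

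With this identification in hand, the inclusion $\{[z_{\i^\ast}(b_\bullet)] : \val b_k = n_k\} \subset A^{\i}(n_\bullet)$ is immediate, and the reverse inclusion follows from the fact that the semi-infinite stratum $S^-_\mu \cap S^+_0$ of appropriate weight is precisely parametrized (modulo the $T(\mathcal{O})$-action, which is absorbed into the coordinates $b_k \in \mathcal{K}^\times$) by the image of $y_{\i^\ast}$ twisted by $\psi_{w_0}$. Then $Z^{\i}(n_\bullet) = \overline{A^{\i}(n_\bullet)}$ is irreducible of the correct dimension and coincides with Mirkovi\'{c}--Vilonen's cycle, since both are closures of strata labelled by the same $\i$-Lusztig datum.

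The uniqueness of the coweight $(0,\mu)$ representative within a $Y$-orbit of stable MV cycles is then formal: translation by $t^\nu \in T(\mathcal{K})$ shifts the moment polytope by $\nu$, so requiring the ``top'' vertex to be $0$ rigidifies the choice. The principal obstacle is the second step, namely the comparison of the valuations of the twist-chart coordinates with the edge-length data; this rests on the full machinery of generalized minors, $\i$-trails, and the tropical Pl\"{u}cker relations of \cite{BZ97,BZ01}, and is essentially the technical core of Kamnitzer's proof in \cite{Kam1}.
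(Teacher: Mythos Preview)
The paper does not give its own proof of this theorem: it is stated with the citation \cite[Theorem 4.5, Theorem 3.1]{Kam1} and no argument follows. In the paper it functions purely as a quoted result from Kamnitzer's work, used as input for the subsequent Theorem~\ref{Thm: geom algo} and the discussion of MV integrals.

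Your proposal is therefore not comparable to anything in the paper itself; rather, it is a sketch of the argument in \cite{Kam1}. As such a sketch it is broadly accurate in spirit---the key mechanism really is the BZ twist $\psi_{w_0}$ intertwining the toric chart $y_{\i^\ast}$ with the parametrization by edge-lengths, and the identification of valuations with Lusztig data via generalized minors and the tropical Pl\"ucker relations. One point to be careful about: the reverse inclusion you describe (that every point of $A^{\i}(n_\bullet)$ arises from some $z_{\i^\ast}(b_\bullet)$) is not quite as formal as you suggest, since the toric chart only hits a Zariski-open subset of $U^-$ and one must argue that the complement contributes nothing new at the level of the stratification; in Kamnitzer's treatment this is handled by working with the constructible/generic locus and using that MV cycles are irreducible of the correct dimension. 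But for the purposes of this paper none of that matters---you should simply cite the result, as the author does.
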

 In this way, one defines the notion of the $\i$-Lusztig data of a stable MV-cycle. Letting $\mathcal{L}(-\infty)$ denote the collection of stable MV cycles, Kamnitzer shows that this gives a bijection
\begin{align*}
\mathcal{L}(-\infty) &\longleftrightarrow \:\:\B(-\infty)\\ Z^{\i}(n_\bullet)\quad&\longleftrightarrow b_{\i}(n_\bullet),
\end{align*}
where $b_{\i}:\zz^N_{\geq0}\lra \B$ was the $\i$-Lusztig parametrization as in Section \ref{Sec: canonical}.

\begin{Rem}
With this identification, once a long word $\i$ is selected, we frequently use Kamnitzer's result to identify a crystal graph $\B(\lam)$ or $\B(-\infty)$  with the corresponding set of MV cycles or MV polytopes $\mathcal{L}(\lam)$ or $\mathcal{L}(-\infty)$ without comment.
\end{Rem}

\subsection{Transition to non-archimedean setting}
Several of the results and coordinates of the preceding sections were obtained for reductive complex algebraic groups, and we pause to justify any use of these results when dealing with groups defined over $\qq$ or $\qq_p$. The point is that we only apply decompositions when the corresponding functions are known to be rational with $\zz$-coefficients. 
Additionally, many of the maps and constructions used in the previous section are only birational. This is nevertheless suitable for our purposes due to the following two easy lemmas.
\begin{Lem}\label{Lem: Zariski topology}
Suppose $V$ is a quasi-affine algebraic variety over $\qq$, with rational points $V_\qq$ and complex points $V_\cc$. Then the subspace topology on $V_\qq$ under the inclusion $V_\qq\subset V_\cc$ agrees with the Zariski topology on $V_\qq$.
\end{Lem}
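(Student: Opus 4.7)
The plan is to reduce to the affine case (since the subspace topology and Zariski topology are both local on open subsets, and a quasi-affine variety is open in an affine one) and then verify both inclusions directly using the coordinate ring. Write $V \subset \A^n_\qq$ with coordinate ring $A$, so that $V_\qq = V(\qq) \subset \qq^n$ and $V_\cc = V(\cc) \subset \cc^n$.

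First I would check that the Zariski topology on $V_\qq$ is coarser than the subspace topology. If $Z \subset V_\qq$ is Zariski closed, then $Z$ is cut out by polynomials $g_1,\ldots,g_m \in A$ with rational coefficients. These same polynomials, viewed in $A \otimes_\qq \cc$, cut out a Zariski closed set $Z_\cc \subset V_\cc$, and by construction $Z_\cc \cap V_\qq = Z$. Hence $Z$ is closed in the subspace topology.

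The reverse inclusion is the content of the lemma. Fix a $\qq$-vector space basis $\{e_\alpha\}_{\alpha \in I}$ of $\cc$ (a Hamel basis). Any element $f \in A \otimes_\qq \cc$ has a unique decomposition $f = \sum_\alpha e_\alpha g_\alpha$ with $g_\alpha \in A$ and only finitely many $g_\alpha$ nonzero. For a rational point $x \in V_\qq$, each $g_\alpha(x) \in \qq$, and the $e_\alpha$ are $\qq$-linearly independent, so $f(x) = 0$ if and only if $g_\alpha(x) = 0$ for every $\alpha$. Consequently, for any ideal $J \subset A \otimes_\qq \cc$ with vanishing locus $Z_\cc \subset V_\cc$, the intersection $Z_\cc \cap V_\qq$ is exactly the vanishing locus in $V_\qq$ of the ideal $J' \subset A$ generated by all of the components $g_\alpha$ of all elements of a generating set of $J$. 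Therefore $Z_\cc \cap V_\qq$ is Zariski closed in $V_\qq$, which proves the other inclusion.

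I expect no genuine obstacle here: the only subtle point is making sure the $\qq$-basis argument is correct, and the key observation is simply that $\qq$-linear independence of $\{e_\alpha\} \subset \cc$ forces coordinate-wise vanishing on rational points. The quasi-affine reduction is handled by covering $V$ with standard opens $D(f)$ for $f \in A$, on each of which the coordinate ring is $A[f^{-1}]$ and the same argument applies verbatim.
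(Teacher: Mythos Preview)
Your argument is correct. The Hamel basis trick---decomposing $f\in A\otimes_\qq\cc$ as $\sum_\alpha e_\alpha g_\alpha$ with $g_\alpha\in A$ and observing that on rational points the $\qq$-linear independence of the $e_\alpha$ forces $f(x)=0\iff g_\alpha(x)=0$ for all $\alpha$---is exactly the right idea and handles the nontrivial inclusion cleanly.

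The paper itself does not prove this lemma; it simply records it as one of ``two easy lemmas'' and moves on. So there is nothing to compare against, and your proof fills in the omitted details. One minor structural comment: rather than doing the affine case and then patching over standard opens at the end, it is slightly cleaner to note at the outset that for $V$ quasi-affine in $\A^n_\qq$, both topologies on $V_\qq$ are subspace topologies (from $\qq^n$ and from $\cc^n\cap V_\qq$ respectively), so the whole statement reduces immediately to the case $V=\A^n$, where your Hamel basis argument applies directly. But this is cosmetic.
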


\begin{Lem}\label{Lem: measure zero}
Let $F$ be a locally-compact topological field of infinite cardinality, and let $\mu$ denote the Haar measure on $F^n$. If a subset $V\subset F^n$ is Zariski-closed (identifying $F^n = \mathbb{A}^n_F$), then $\mu(V) = 0$.
\end{Lem}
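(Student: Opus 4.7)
The plan is to proceed by induction on $n$ after reducing to the vanishing locus of a single polynomial. Every proper Zariski-closed subset $V \subsetneq F^n$ is contained in $V(f) = \{x \in F^n : f(x) = 0\}$ for some nonzero $f \in F[x_1,\ldots,x_n]$; since $f$ is continuous on $F^n$, the set $V(f)$ is closed and hence Borel measurable, and by monotonicity of $\mu$ it suffices to show $\mu(V(f)) = 0$ for every nonzero $f$. (The case $V = F^n$ is implicitly excluded, since $\mu(F^n) = \infty$.)

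For the base case $n = 1$, the first task is to show that singletons in $F$ have Haar measure zero. By translation-invariance every point has the same measure $c$; any compact neighborhood of $0$ has finite Haar measure but contains infinitely many distinct points because $F$ is infinite and non-discrete, forcing $c = 0$. A nonzero univariate polynomial has only finitely many roots, so its zero locus has measure zero.

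For the inductive step, expand $f = \sum_{i=0}^{d} a_i(x_1,\ldots,x_{n-1})\, x_n^i$ with some $a_{i_0}$ nonzero, and let $S \subset F^{n-1}$ denote the locus where every $a_i$ vanishes. Then $S$ is contained in the vanishing locus of the single nonzero polynomial $a_{i_0}$, so has measure zero in $F^{n-1}$ by the inductive hypothesis. For each $(x_1,\ldots,x_{n-1}) \notin S$, the fiber $\{x_n \in F : f(x_1,\ldots,x_n)=0\}$ is the zero set of a nonzero polynomial in one variable, hence of measure zero in $F$ by the base case. Applying Fubini's theorem to the characteristic function of $V(f)$ then yields $\mu(V(f)) = 0$, completing the induction.

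The only genuine subtlety is the base case's implicit appeal to non-discreteness of $F$, which is automatic for all the locally compact fields of interest in the paper ($\rr$, $\cc$, and non-archimedean local fields); a discrete infinite field equipped with counting measure would actually violate the statement. Everything else is routine measure-theoretic bookkeeping and I do not anticipate any serious obstruction.
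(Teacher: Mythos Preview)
Your proof is correct. The paper does not actually prove this lemma at all: it is introduced as one of ``two easy lemmas'' and simply stated without argument, so there is no paper proof to compare against. The induction-plus-Fubini argument you give is the standard one.

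Your closing caveat is on point and worth recording. As literally stated the lemma is false on two counts: $V = F^n$ is not excluded, and an infinite field with the discrete topology is locally compact with counting measure as Haar measure, so every nonempty finite Zariski-closed set has positive measure. The paper plainly intends $F$ to be a non-discrete local field (indeed only $\rr$, $\cc$, and non-archimedean local fields are ever in play) and $V$ to be a \emph{proper} Zariski-closed subset; once those hypotheses are made explicit, your argument goes through verbatim.
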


We now see how these results enable us to use certain results from \cite{BZ01}: for any $\i\in R(w_0)$, the image $U^-_{\i}$ of the toric chart $y_{\i}$ is Zariski-open (hence, dense by irreducibility) in $L^{e,w_0}_\cc$. In fact, it follows from \cite[Theorem 4.4]{Kam1} that the polynomials cutting out the closed compliment are all defined over $\zz$. By Lemma \ref{Lem: Zariski topology}, we see that $U^-_{\i}$ is also Zariski-open over $\qq$, hence over any extension $F$ as base change preserves open immersions. 

Since $L^{e,w_0}$ is Zariski-open and dense in $U^-$, so is $U^-_{\i}$ . Lemma \ref{Lem: measure zero} now implies that the compliment of $U^-_{\i}\subset U^-$ has measure zero. Therefore, we may replace integration over $U^-$:
\begin{equation}\label{eqn: functional identity}
\int_{U^-} = \int_{U^-_{\i}}.
\end{equation}

\section{Spherical Whittaker functions}\label{Section: Whittaker}

Let us now assume that $F$ is a local non-archimedean field with residue characteristic $p$ containing a full $n^{th}$ set of roots of unity. For simplicity, we will assume that $\mu_{2n}\subset F$ and that $(p,n)=1$. Let $\calo$ denote the ring of integers in $F$, let $\vp$ be a fixed uniformizer, and let $|\calo/\vp|=q$. 
In this section, we recall the notion of a spherical Whittaker function of an unramified principal representation of $\Gn$, where $\Gn$ is an $n$-fold cover of our fixed simply-connected Chevalley group. We then recall the explicit algorithm for an Iwasawa decomposition of \cite{McN} and relate it to the toric charts on $U^-$. Motivated by this reinterpretation, we will augment the algorithm to make the task of expressing the MV integrals in terms of these coordinates more algebraic.

\subsection{Principal series representations and spherical Whittaker vectors}
We refer to \cite{leslie2017generalized} for the relevant theory of covering groups and principal series; all notation used here matches what is introduced there. 
We assume that we have a fixed degree $n$ cover
\[
1\lra \mu_n\lra \Gn\xrightarrow{p}\G\lra 1.
\]
Denote the hyperspecial maximal compact subgroup $K=\G(\calo)$, $H_K=H\cap K$ for any closed subgroup $H\subset \G$, and set $\overline{H}=p^{-1}(H)$.  Assume that $\chi: Z(\overline{T})T_K\lra \cc^\times$ is an unramified character, and let $I(\chi)$ be the associated unramified principal series representation. This is defined by first inducing to $\overline{T}$
\[
i(\chi) = \ind_{Z(\overline{T})T_K}^{\overline{T}}(\chi),
\] 
followed by parabolically inducing from $\overline{B}$ to $\Gn$. There exists a unique $K$-fixed line in $I(\chi)$, where $K$ acts through our fixed splitting $\kappa: K\lra \Gn$:

\begin{Thm}\cite[Theorem 5.1]{McN}
The unramified representation $I(\chi)$ has a one-dimensional space of $K$-fixed vectors.
\end{Thm}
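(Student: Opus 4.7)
The plan is a two-step reduction: first from $I(\chi)^K$ to a computation in the finite-dimensional inducing representation $i(\chi)$, then to an analysis of one-dimensional $\kappa(T_K)$-isotypic components using the commutator structure of $\overline{T}$.

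\textbf{Step 1: Iwasawa reduction.} By the Iwasawa decomposition $\Gn=\overline{B}\cdot\kappa(K)$, any $f\in I(\chi)^K$ is determined by its value $f(1)\in i(\chi)$, and comparing two decompositions $\overline{b}_1\kappa(k_1)=\overline{b}_2\kappa(k_2)$ forces $f(1)$ to be fixed by the action of $\overline{B}\cap\kappa(K)=\kappa(B_K)$. Writing $B_K=T_K\ltimes U_K$ and using that $I(\chi)$ is parabolically induced from a representation of $\overline{T}$, the unipotent factor $\kappa(U_K)$ acts trivially on $i(\chi)$; hence
\[
\dim I(\chi)^K=\dim i(\chi)^{\kappa(T_K)}.
\]

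\textbf{Step 2: Decomposition of $i(\chi)|_{\kappa(T_K)}$.} For a degree $n$ abelian cover one has $[\overline{T},\overline{T}]\subset\mu_n\subset Z(\overline{T})$, so $H:=Z(\overline{T})T_K$ is normal in $\overline{T}$ and $i(\chi)=\ind_{H}^{\overline{T}}(\chi)$ decomposes under $\kappa(T_K)\subset H$ as a direct sum of one-dimensional characters indexed by cosets $[\overline{t}_0]\in\overline{T}/H$, the character on the line at $[\overline{t}_0]$ being
\[
\kappa(t_k)\;\longmapsto\;\chi(\kappa(t_k))\cdot\chi\bigl([\overline{t}_0,\kappa(t_k)]\bigr).
\]
By the unramified hypothesis, $\chi(\kappa(t_k))=1$, so the coset $[\overline{t}_0]=[1]$ contributes the trivial character with multiplicity exactly one. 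For $[\overline{t}_0]\notin H$, maximality of $H$ as an abelian subgroup of $\overline{T}$ produces some $t_k$ with $[\overline{t}_0,\kappa(t_k)]\neq 1$ in $\mu_n$; the composition $\chi\circ[\overline{t}_0,-]$ is then a nontrivial character of $T_K$, using the genuineness condition that $\chi|_{\mu_n}$ is the tautological embedding $\mu_n\hookrightarrow\cc^\times$. Summing these contributions yields $\dim i(\chi)^{\kappa(T_K)}=1$.

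The main obstacle is the last step: verifying that \emph{no} nontrivial coset $[\overline{t}_0]$ accidentally contributes a trivial character. This amounts to the nondegeneracy of the commutator pairing $\overline{T}/H\times T_K\to\mu_n$, which rests on the Heisenberg-type structure of $\overline{T}$ (so that $H$ is maximal abelian precisely because the pairing has trivial left kernel) together with the injectivity of $\chi|_{\mu_n}$ guaranteed by the assumption $\mu_{2n}\subset F$. All remaining ingredients are formal: the Iwasawa decomposition for covers (available since $K$ is hyperspecial and splits canonically), the compatibility of the Iwahori factorization $B_K=T_KU_K$ with $\kappa$, and the standard Mackey decomposition of a representation induced from a normal subgroup.
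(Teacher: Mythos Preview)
The paper does not prove this statement; it is quoted from \cite[Theorem 5.1]{McN} without argument. Your two-step outline (Iwasawa reduction to $i(\chi)^{\kappa(T_K)}$, then Mackey decomposition over $\overline{T}/H$) is the standard proof and is essentially what McNamara does.

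Two small corrections. First, the injectivity of $\chi|_{\mu_n}$ is not a consequence of $\mu_{2n}\subset F$; it is the \emph{genuineness} hypothesis, part of the standing setup for representations of the cover. The assumption $\mu_{2n}\subset F$ is used elsewhere (to force $(\varpi,\varpi)_n=1$, controlling signs in later formulas), and is not needed for the argument here.

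Second, the heart of the matter is your assertion that $H=Z(\overline{T})\kappa(T_K)$ is maximal abelian in $\overline{T}$, and this is where the tameness hypothesis $(p,n)=1$ actually enters. The commutator on $\overline{T}$ descends to a bilinear pairing $T\times T\to\mu_n$ computed by the $n$-th Hilbert symbol; tameness gives $(\calo^\times,\calo^\times)_n=1$ (so $H$ is abelian), and the surjectivity of the tame symbol $(\varpi,\,\cdot\,)_n:\calo^\times\to\mu_n$ (from $n\mid q-1$) is what forces any $\overline{t}_0$ commuting with all of $\kappa(T_K)$ to have its valuation vector already in the radical, hence $\overline{t}_0\in Z(\overline{T})\kappa(T_K)$. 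Your phrase ``Heisenberg-type structure'' points in the right direction but does not supply this computation; if you want the sketch to be self-contained, this is the step to spell out.
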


Let $\varphi_K:\Gn\lra i(\chi)$ be a non-zero vector in this space. We are interested in computing Whittaker functions associated to this vector. When $n>1$, the dimension of the space of Whittaker functionals $Wh(\chi)$ on $I(\chi)$ is $\dim(i(\chi))$. To encode the choice of Whittaker functional, we adopt the notation of \cite{McN} by choosing for each $i\in I$ a complex number $x_i$ such that 
\[
\chi\left(\prod_{i}\overline{h}_{\al_i}(\varpi^{m_i})\right) = \prod_{i}x_i^{m_i}
\]
whenever the argument lies in the group $Z(\overline{T})T_K$ and $\sum_im_i\al_i^\vee\in Y^{sc}_{Q,n}$ (a certain sublattice of $Y$ determined by the covering group).  This choice is equivalent to fixing an isomorphism 
\[
\Ind_{\overline{B}}^{\Gn}\left(i(\chi)\right) \cong \Ind_{Z(\overline{T})T_K}^{\Gn}(\chi\otimes \de^{1/2}),
\]
where $\de$ is the modular character of $\overline{B}$. With this choice, we obtain an associated function $f:\Gn\lra \cc$ from the spherical vector $\varphi_K$ given by
\[
f\left(\zeta u\prod_{i}\overline{h}_{\al_i}(\varpi^{m_i})k\right) = \zeta\prod_{i}(q^{-1}x_i)^{m_i}
\]
where $\zeta\in \mu_n$, $u\in U$, $m_i\in\zz$, and $k\in K$. This suffices to define $f$ by the Iwasawa decomposition. 

Fix an additive character $\psi$ of $F$ of conductor $\calo$ and denote also by $\psi$ the induced generic character on $U$. That is, we require that for each simple root $\al$, the restriction of $\psi$ to the root group $U_{\al}\cong F$ is exactly $\psi$. We define the Whittaker function associated to $\varphi_K$ (or equivalently to $f$) by the integral
\[
 \int_Uf(\overline{w_0}^{-1}ug)\psi(u)du.
\] 
By the Iwasawa decomposition and the $K$-invariance of $f$, we may rewrite this as
\[
W_\chi(\lam) =  \int_{U^-}f(u\vp^{w_0\lam})\psi(u)du.
\]
Conjugating the torus element past $u$ and changing variables, we obtain (up to a shift) the integral 
\[
I_\lam = \int_{U^-}f(u)\psi_\lam(u)du
\]
where $\psi_\lam(u) = \psi(\vp^{w_0\lam}u\vp^{-w_0\lam})$. It is a standard fact \cite[Lemma 5.1]{CassShalika} that $I_\lam=0$ unless $\lam$ is dominant. 

\subsection{An explicit algorithm for the Iwasawa decomposition}\label{Section: algorithm}
For the purpose of computing such Whittaker functions, McNamara introduces in \cite{McN} an explicit algorithm for an Iwasawa decomposition for an element $u\in U^-$: for a fixed long word $\i\in R(w_0)$, this algorithm writes $u = p_1k$, where $p_1\in B$ and $k\in K$. In this section, we recall this algorithm and show that, under certain assumptions on the element $u\in U^-$, we can relate this algorithm to the toric charts in Section \ref{Section: toric}.

Let us recall the algorithm: let ${\i}= (i_1,\ldots, i_N)\in R(w_0)$ be a fixed long word; this induces a convex total ordering $<_{\i}$ on $\Phi^+$:
\[
\Phi^+=\{\ga_1<_{\i}\cdots<_{\i} \ga_N\}, \quad \ga_j=s_{i_N}s_{i_{N-1}}\cdots s_{i_{j+1}}\al_{i_j}.
\]

Define $G_k$ to be the subset of elements in $G$ which can be written in the form
\[
g = \left(\prod_{j=1}^{k-1}y_{\ga_j}(x_j)\right)t\left(\prod_{j=k}^{N}y_{\ga_j}(x_j)\right),
\]
where $t\in T$.
\subsubsection{Algorithm}\label{section: Algorithm}
Begin with $u = \prod_{j=1}^N y_{\ga_j}(x_j)$, and set $p_{N+1}=1$. In decreasing order of $k$, McNamara shows inductively that there exists $p_k'\in G_k$ and $t_k\in F$ such that $y_{\ga_k}(x_k)p_{k+1} = p_k'y_{\ga_k}(t_k)$.  Now set 
\[
p_k = \begin{cases} p_k'  \qquad\qquad\qquad\qquad\qquad\;\mbox{if  } &t_k\in \calo\\ p_k't_k^{-\ga_k}x_{\ga_k}(t_k) \qquad\quad\qquad\;\mbox{ if} &t_k\notin \calo.\end{cases}
\]
The algorithm terminates with $p_1$, and clearly one has $u=p_1k$ with $k\in K$ \cite[Theorem 4.5]{McN}.

The algorithm produces the variables 
\begin{equation}\label{eqn: w coords}
w_k=\begin{cases} t_k \qquad \mbox{if } |t_k|>1\\ 1\qquad \mbox{ if } |t_k|\leq1\end{cases},
\end{equation}
so that if we write $p_1 = h\cdot u$, with $h\in T$ then 
\[
h=\prod_{j=N}^1w_j^{-\ga_j}.
\]
Setting $m_k(u;\i) =-\val(w_k)\geq 0$, define for any ${\bf m}=(m_1,\ldots,m_N)\in \zz_{\geq0}^N$
\begin{equation*}\label{eqn: unipotent cell}
C^{\i}(\bm) = \{ u\in U^- : m_k(u;\i) =m_k\;\;\mbox{for all } k\in [N]\}.
\end{equation*}
We obtain a decomposition
\begin{equation}\label{eqn: decomposition}
U^-=\bigsqcup_{{\bf m}\in \zz_{\geq0}^N}C^{\i}(\bf m).
\end{equation}

In \cite[Theorem 7.2]{McN}, McNamara shows that this decomposition is essentially the same as the one arising in \cite{Kam1}. In particular, the integers ${\bf m}\in \zz_{\geq0}^N$ become identified with $\i$-Lusztig data for the corresponding MV polytope, so that the decomposition (\ref{eqn: decomposition}) writes $U^-$ as a disjoint union indexed by MV polytopes, or the crystal graph $\B(-\infty)$.  
The theorem below states that the coordinates $\{t_\al,w_\al\}$ may be explicitly related to the toric charts employed by Kamnitzer. This allows us to apply results of \cite{BZ01} on these charts.
\begin{Thm}\label{Thm: geom algo}
Fix $(b_1,\ldots,b_N)\in (F^\times)^N$, and define such that
$$u=z_{\i}(b_\bullet)= \psi_{w_0}\left(y_{\i}(b_1,\ldots, b_N)\right)\in C^{\i}(\bm),$$
 If $m_k>0$ for all $k\in [N]$, then $w_k = \frac{1}{b_k}$ for each $j$, where $\{w_j\}$ are the coordinates (\ref{eqn: w coords}) arising from the Iwasawa decomposition algorithm. In particular, $\val(b_k) = m_k$.

\end{Thm}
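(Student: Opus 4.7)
The hypothesis $m_k > 0$ for all $k$ forces the Iwasawa algorithm of Section \ref{Section: algorithm} to enter its second branch at every step, so $w_k = t_k$ throughout and the content of the theorem reduces to proving $t_k = 1/b_k$. The plan is to proceed by reverse induction on $k$, running the algorithm from $k = N$ down to $k = 1$ while keeping track of the form of $p_{k+1}$ in terms of $b_\bullet$. The base case $k = N$ is a direct identification: since $p_{N+1} = 1$, the step produces $t_N = x_N$, where $x_N$ is the coefficient of $y_{\ga_N}$ in the factorization $u = \prod_{j=1}^N y_{\ga_j}(x_j)$ obtained by applying the Chamber Ansatz of \cite{BZ97,BZ01} to $u = z_{\i}(b_\bullet) = \eta(y_{\i}(b_\bullet)^\iota)$; the leading term of this Chamber Ansatz expression is $1/b_N$.

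For the inductive step, the workhorse is the $SL_2$ identity
\[
y_{\ga_k}(t) \;=\; x_{\ga_k}(1/t)\cdot t^{-\ga_k^\vee}\cdot \overline{s_{\ga_k}}\cdot x_{\ga_k}(1/t),
\]
which isolates the torus factor $t_k^{-\ga_k^\vee}$ that step $k$ extracts into the $h$-part. I would combine this with the commutation relations (\ref{eqn: relation 2}) to push $y_{\ga_k}(x_k)$ past the already-processed tail $p_{k+1}$, and use the $T$-equivariance of generalized minors (cf.\ the identities displayed just before Theorem \ref{Thm 5.8}) to control how the torus part of $p_{k+1}$ rescales $x_k$. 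By induction, the torus part of $p_{k+1}$ is a monomial in the $b_j^{\pm\ga_j^\vee}$ for $j > k$, and matching the rescaled $x_k$ against the Chamber Ansatz expression should yield $t_k = 1/b_k$, closing the induction.

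The main obstacle is bookkeeping the cross-terms that arise when $y_{\ga_k}(x_k)$ is moved past the unipotent part of $p_{k+1}$ via (\ref{eqn: relation 2}); these contribute lower-order corrections which must be shown not to perturb the \emph{exact} equality $t_k = 1/b_k$ (not merely match valuations). A useful shortcut for the ``in particular'' assertion $\val(b_k) = m_k$ is to invoke \cite[Theorem 7.2]{McN} together with Kamnitzer's Theorem \ref{Thm: An} (after the appropriate translation between $\i$ and $\i^\ast$): both identify the Lusztig data of the stable MV-cycle determined by $u$ with the valuations of the toric coordinates, giving $m_k = \val(b_k)$ with no further computation. The finer equality $w_k = 1/b_k$ can then be pinned down cleanly by evaluating a single, carefully chosen generalized minor --- for instance $\Del_{w_{(k-1)}\Lam_{i_k},\,w_{(k)}\Lam_{i_k}}$ with $w_{(k)} = s_{i_N}\cdots s_{i_{k+1}}$ --- on both descriptions of $u$: Theorem \ref{Thm 5.8} guarantees that on the Kamnitzer side this is a single dominant $\i$-trail monomial in $b_k$, and on the algorithmic side it is the corresponding monomial in $w_k^{-1}$, so comparison yields the reciprocal relation.
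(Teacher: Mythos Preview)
Your approach differs substantially from the paper's, which sidesteps induction and the commutation-relation bookkeeping entirely. The paper's key observation is that when every $m_k>0$, your $SL_2$ identity (in the form $y_\ga(t)=t^{-\ga^\vee}x_\ga(t)\,\overline{s_\ga}\,x_\ga(t^{-1})$) fires at \emph{every} step, so the $K$-factor in $u=p_1k$ has the closed form
\[
k=\overline{s_{\ga_1}}x_{\ga_1}(t_1^{-1})\cdots\overline{s_{\ga_N}}x_{\ga_N}(t_N^{-1}).
\]
Using $\ga_j=s_{i_N}\cdots s_{i_{j+1}}\al_{i_j}$ and the definition $x_\ga(\cdot)=\overline{w}x_i(\cdot)\overline{w}^{-1}$, the Weyl conjugates telescope to give $k=\overline{w_0}\,x_{\i}(t_1^{-1},\ldots,t_N^{-1})$, i.e.\ $x_{\i}(w_\bullet^{-1})=[(\overline{w_0}^{-1}u)^\iota]_+^\iota$. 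On the other side, by definition $y_{\i}(b_\bullet)=\psi_{w_0}(u)=[\overline{w_0}u^{\iota T}]_-$. Writing $u=tz\overline{w_0}v$ in Bruhat form, a direct check shows both expressions equal $v$ up to the involution $\iota T$, so $x_{\i}(w_\bullet^{-1})=y_{\i}(b_\bullet)^{\iota T}$, and injectivity of the toric charts gives $w_k=b_k^{-1}$ at once.

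The gap in your plan is exactly the one you flag: the cross-terms from pushing $y_{\ga_k}(x_k)$ past the unipotent part of $p_{k+1}$ are never controlled, and this is precisely the mess the paper's global argument is designed to avoid. Your fallback --- compare a single generalized minor on both descriptions of $u$ --- is circular as stated: asserting that on the algorithm side the minor is a monomial in $w_k^{-1}$ presupposes a formula for $u$ (or for its $K$-part) in terms of the algorithm's output, which is the content of the theorem. The paper's two-step move (extract $x_{\i}(w_\bullet^{-1})$ from the telescoped $K$-part, then match against $y_{\i}(b_\bullet)$ via the involutions $\iota,T$) is what makes a minor-free, induction-free comparison possible.
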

\begin{proof}
The assumption on the $\i$-Lusztig data ${\bf m}$ implies that at each step of the algorithm, $p_j =p_j't_j^{-\ga_j}x_{\ga_j}(t_j)$. As a result, we may identify the element $k\in K$ in $u=p_1k$ as
\[
k = \overline{s_{\ga_1}}x_{\ga_1}(t_1^{-1})\overline{s_{\ga_2}}x_{\ga_2}(t_2^{-1})\cdots\overline{s_{\ga_N}}x_{\ga_N}(t_N^{-1})
\]
Note that since we have $\ga_j =s_{i_N}s_{i_{N-1}}\cdots s_{i_{j+1}}\al_{i_j},$ it follows that 
\[
\overline{s_{\ga_j}} = \overline{s_{i_N}}\;\overline{s_{i_{N-1}}}\cdots \overline{s_{i_{j+1}}}\;\overline{s_{i_j}}\;\overline{s_{i_{j+1}}}^{-1}\cdots \overline{s_{i_N}}^{-1}.
\]
By the formulas in (\ref{eqn: root groups}), one may easily check that this implies 
\[
k = \overline{w_0} x_{i_1}(t_1^{-1})x_{i_2}(t_2^{-1})\cdots x_{i_N}(t_N^{-1}) = \overline{w_0}x_{\i}(t_1^{-1},\ldots,t_N^{-1}),
\]
where we have used the identification $\overline{s_{\ga_1}}\;\overline{s_{\ga_2}}\cdots\overline{s_{\ga_N}}=\overline{w_0}$. By our assumption on the Lusztig data, $t_\al=w_\al$, so that 
\begin{equation*}
x_{\i}(w_1^{-1},\ldots,w_N^{-1})=[(\overline{w_0}^{-1}u)^\iota]_+^\iota.
\end{equation*}
On the other hand, we have $y_{\i}(b_1,\ldots, b_N)=\psi_{w_0}(u) =[\overline{w_0}u^{\iota T}]_-$. Note that for any $u\in L^{e,w_0}$, there exist $z,v\in U^+$ and $t\in T$ such that $u=tz\overline{w_0}v$.
One then checks that $[(\overline{w_0}^{-1}u)^\iota]_+^\iota = v$, and that $[\overline{w_0}u^{\iota T}]_- = v^{\iota T}$.

Therefore, $x_{\i}(w_1^{-1},\ldots,w_N^{-1})=y_{\i}(b_1,\ldots, b_N)^{\iota T}$. By injectivity of the maps $x_{\i}$ and $y_{\i}$, we conclude $b_j=w_j^{-1}$ for each $j\in [N]$.
\end{proof}

We end this section by augmenting the algorithm presented above to take advantage of the conceptual consequences of the above theorem. 
\begin{Algo}\label{Algo 2}
Beginning with $u = \prod_{j=1}^N y_{\ga_j}(x_j)$, and set $p_{N+1}=1$. In decreasing order of $k$, there exists $p_k'\in G_k$ and $t_k\in F$ such that $y_{\ga_k}(x_k)p_{k+1} = p_k'y_{\ga_k}(t_k)$.  Now set 
\begin{equation*}
w_k=\begin{cases} t_k \qquad \mbox{if } |t_k|>1\\ 1\qquad \mbox{ if } |t_k|\leq1\end{cases},
\end{equation*}
as before. Now set
\[
p_k = p_k'w_k^{-\ga_k}x_{\ga_k}(w_k).
\]
The algorithm terminates with $p_1$, and clearly one has $u=p_1k$ with $k\in K$.\qed
\end{Algo}
This removes the issue, notably present in the proof of Theorem 8.4 of \cite{McN}, that certain terms in the additive character $\psi_\lam(u)$ can vanish in subtle ways depending on the different cases of the algorithm. We state this as the following proposition.
\begin{Prop}\label{Prop: independence}
For any $u\in U^-$, write $u= \prod_{j=1}^N y_{\ga_j}(x_j)$. For any $k\in \{1,\ldots, N\}$, write $x_k=h_k(t_\al,w_\al)$, where $h_k(\underline{x},\underline{y})\in \zz[x_1,\ldots,x_N,y_1^{\pm1},\ldots,y_N^{\pm1}]$ and $(t_\al)$ and $(w_\al)$ are the variables arising from Algorithm \ref{Algo 2}. Then the Laurent polynomial $h_k(\underline{x},\underline{y})$ is independent of $u$.
\end{Prop}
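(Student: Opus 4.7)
The plan is to argue by descending induction on $k$ from $N$ to $1$, carrying the strengthened inductive invariant that, for all $j > k$, the coordinate $x_j$ equals a universal Laurent polynomial in $\zz[t_\al, w_\al^{\pm 1}]$ evaluated at the output variables of Algorithm \ref{Algo 2}, and that the accumulated element $p_{k+1}$ admits a universal expression in $(t_\al, w_\al)_{\al > k}$ as a product of Chevalley generators whose arguments are themselves Laurent polynomials in those variables. The base case $k = N$ is immediate: since $p_{N+1} = 1$, the factorization $y_{\ga_N}(x_N) = p_N' y_{\ga_N}(t_N)$ with $p_N' \in G_N$ forces $p_N' = 1$ and $x_N = t_N$, and the update rule gives $p_N = w_N^{-\ga_N^\vee} x_{\ga_N}(w_N)$, verifying the invariant.

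For the inductive step, substitute the inductive expression for $p_{k+1}$ into
\[
y_{\ga_k}(x_k)\, p_{k+1} = p_k'\, y_{\ga_k}(t_k), \qquad p_k' \in G_k,
\]
and expand the left-hand side using the Chevalley commutation relations \eqref{eqn: relation 1} and \eqref{eqn: relation 2}. These relations are universal polynomial identities over $\zz$, so, together with the fact that $p_{k+1}$ is independent of $x_k$ and $y_{\ga_k}$ is a one-parameter subgroup, the unique $G_k$-factorization expresses $t_k$ as an affine-linear function of $x_k$ whose remaining coefficients are universal Laurent polynomials in $(t_\al, w_\al)_{\al > k}$. Inverting that relation yields $x_k$ as a Laurent polynomial $h_k \in \zz[t_\al, w_\al^{\pm 1}]$ independent of $u$, with denominators appearing only in the $w_\al$'s, coming from the torus factors $w_j^{-\ga_j^\vee}$. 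Applying the update rule $p_k = p_k' w_k^{-\ga_k^\vee} x_{\ga_k}(w_k)$ preserves the invariant, completing the induction.

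The conceptual heart of the argument is that Algorithm \ref{Algo 2} is engineered to insert the factor $w_k^{-\ga_k^\vee} x_{\ga_k}(w_k)$ uniformly at every step, irrespective of whether $|t_k| > 1$; the potentially extra factor $x_{\ga_k}(1)$ arising when $w_k = 1$ is absorbed harmlessly into the $K$-component of $u = p_1 k$. This uniformity is precisely what yields a \emph{single} universal Laurent polynomial $h_k$, in contrast to the case-dependent formulas one would obtain from the original algorithm, and is the reason Algorithm \ref{Algo 2} was introduced. The main technical obstacle is verifying the affine-linearity of $t_k$ in $x_k$ after the Chevalley expansion; this follows from the structure of the one-parameter subgroup $y_{\ga_k}$ and the universal form of the relations, but requires careful bookkeeping through the cascading commutations that arise when one normal-orders $y_{\ga_k}(x_k)\, p_{k+1}$ into $G_k \cdot y_{\ga_k}(F)$.
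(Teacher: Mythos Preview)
Your proposal is correct and follows essentially the same approach as the paper's own proof. Both arguments rest on the single key observation that Algorithm~\ref{Algo 2} applies the update $p_k = p_k' w_k^{-\ga_k^\vee} x_{\ga_k}(w_k)$ uniformly, without branching on whether $|t_k|>1$, so that the sequence of conjugations via \eqref{eqn: relation 1} and \eqref{eqn: relation 2} performed at each step is identical for every $u$; since the structure constants in those relations are integers, the resulting expressions are universal Laurent polynomials over $\zz$. The paper's proof is a two-sentence sketch of exactly this point, whereas you make the descending induction and the invariant carried by $p_{k+1}$ explicit, and you additionally isolate the affine-linearity of $t_k$ in $x_k$ as the step needed to invert back to $x_k=h_k(t_\al,w_\al)$ --- a detail the paper does not spell out.
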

\begin{proof}  The fact that $h_k$ is a Laurent polynomial follows from the group relations (\ref{eqn: relation 1}) and (\ref{eqn: relation 2}). The independence follows from our adoption of the augmented Algorithm \ref{Algo 2}, which is designed so that for each $k$, the set 
\[
y_{\ga_k}(x_k)p_{k+1} = p_k'y_{\ga_k}(t_k)
\] 
requires performing the same set of conjugations (\ref{eqn: relation 1}) and (\ref{eqn: relation 2}) independent of the valuations of $t_j$ for $j>k$. The coefficients of the monomials and degrees are thus determined by these relations and independent of the cases.
\end{proof}

Changing the algorithm in this way translates the problem of computing the Iwasawa decomposition into the algebraic problem of computing the Laurent polynomials $h_i(\underline{x},\underline{y})$. In the next section, we use the theory of generalized minors to compute the specializations $h_i(\underline{x},\underline{x})$ (corresponding to those MV cycles where the $\i$-Lusztig data are positive) for a general long word $\i$. For certain words, this is sufficient to compute the entire Laurent polynomial $h_i(\underline{x},\underline{y})$. We discuss this problem at the end of the next section.
 
\section{MV integrals in terms of Lusztig data}\label{Section: Application}

For each dominant coweight $\lam\in \Lam^+=X_\ast(T)^+$, we seek to compute the integral
\[
I_\lam = \int_{U^-}f(u)\psi_\lam(u)du.
\]
Combining this with the decomposition (\ref{eqn: decomposition}), we may write
$
I_\lam = \sum_{\bm}I_\lam(\bm),
$
where 
\begin{equation}\label{eqn: MV integral}
I_\lam(\bm) = \int_{C^{\i}(\bm)}f(u)\psi_\lam(u)du.
\end{equation}

Due to the connection to MV polytopes and canonical bases, we refer to $I_\lam(\bm)$ as the MV integral associated to $(\i,\bm)\in \B(-\infty)$. It is desirable to compute these integrals in terms of the coordinates $\{t_\al\}$ produced by the algorithm. This was accomplished for $\G=\SL_{r+1}$ with respect to the Gelfand-Tsetlin long word in \cite{McN}, recovering the metaplectic Tokuyama formula of \cite{BBF1} with remarkable efficiency.
\subsection{Expressing $I_\lam(\bm)$ in terms of the algorithm}
 Fix a long word $\i$ as well as an $\i$-Lusztig datum $\bm\in \zz^N_{\geq0}$. As noted in the introduction, the spherical function $f$ is constant on the cell $C^{\i}(\bm)$ with known value. We recall the formula here.
\begin{Prop}\cite[Lemma 6.3 (2)]{McN}\label{Prop: f const}
The spherical function $f$ is constant on $C^{\i}(\bm)$ with value
\[
\prod_{\al\in \Phi^+}\left(q^{-\la\rho,\al^\vee\ra}x_\al\right)^{m_\al},
\]
where the variables $x_\al=\prod_ix_i^{h_i}$, with $\al^\vee=\sum_ih_i\al^\vee_i$. Moreover, if we set $w_\al = \vp^{-m_\al}u_\al$, then
\[
f(u)= \prod_{\al\in \Phi^+}\left(q^{-\la\rho,\al^\vee\ra}x_\al\right)^{m_\al}(u_\al,\vp)^{m_\al+\sum_{\al<_{\i}\be}\la\be,\al^\vee\ra m_\be}.
\] \qedhere
\end{Prop}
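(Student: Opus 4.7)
The plan is to apply the augmented Iwasawa Algorithm~\ref{Algo 2} to any $u\in C^{\i}(\bm)$ and then evaluate the spherical function on the pieces. By construction, the algorithm yields $u = p_1 k$ with $k\in K$ and $p_1\in B^-$, and a direct tracking of the intermediate factors $p_k'w_k^{-\ga_k}x_{\ga_k}(w_k)$ shows that $p_1 = h\cdot u^-$, where $h = \prod_{j=N}^{1} w_j^{-\ga_j}$ is the accumulated torus element and $u^-\in U^-(\calo)\subset K$. Hence by right $K$-invariance of $f$ (through the fixed splitting $\kappa$), we reduce to evaluating $f(h)$.

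To compute $f(h)$, write $w_j = \vp^{-m_j}u_j$ with $u_j\in\calo^{\times}$ when $m_j>0$ (and $w_j = 1$ when $m_j=0$, contributing trivially). In the cover $\Gn$, the relation
\[
\overline{h}_{\ga_j}(\vp^{-m_j}u_j^{-1}) \;=\; \overline{h}_{\ga_j}(\vp^{-m_j})\cdot \overline{h}_{\ga_j}(u_j^{-1})\cdot(\vp^{-m_j},u_j^{-1})^{\epsilon_j}
\]
expresses each factor as a product of a ``$\vp$-part,'' a ``unit part,'' and a Hilbert symbol dictated by the Brylinski--Deligne cocycle. The defining formula for $f$ requires that all $\vp$-factors be collected into a single coweight $\vp^{\nu}$ with $\nu = \sum_\al m_\al\al^\vee$, which forces us to commute each unit part $\overline{h}_{\ga_k}(u_k^{-1})$ past the $\vp$-parts $\overline{h}_{\ga_j}(\vp^{-m_j})$ at positions $j>k$, that is, those $\be = \ga_j$ with $\al = \ga_k <_{\i}\be$. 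Each such commutation contributes a Hilbert symbol of the form $(u_k,\vp)^{m_j\la\ga_j,\ga_k^\vee\ra}$, and the sum over such $j$ together with the initial $(\vp^{-m_k},u_k^{-1})^{\epsilon_k}$ produces the advertised exponent $m_\al + \sum_{\al<_{\i}\be}\la\be,\al^\vee\ra m_\be$.

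Once the torus element is in the form $\zeta\cdot u'\cdot \vp^{\nu}$, the defining formula $f(\zeta u'\prod_i\overline{h}_{\al_i}(\vp^{m_i})k) = \zeta\prod_i(q^{-1}x_i)^{m_i}$ applies directly. The $\prod q^{-1}$ factors combine via the expansion $\al^\vee = \sum_i h_i\al_i^\vee$ to produce $\prod_\al q^{-\la\rho,\al^\vee\ra m_\al}$, and the $\prod x_i^{m_i}$ factors assemble into $\prod_\al x_\al^{m_\al}$ by definition of $x_\al$, while the accumulated $\zeta$ contributes the Hilbert symbol factor $\prod_\al (u_\al,\vp)^{m_\al+\sum_{\al<_{\i}\be}\la\be,\al^\vee\ra m_\be}$, completing the formula.

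The main obstacle is the combinatorial bookkeeping of the Hilbert symbols. In the linear case ($n=1$) the Hilbert symbol is trivial and the first assertion follows immediately from the Iwasawa algorithm; the substantive content is the identification of the exponent in the metaplectic case. This exponent is a shadow of the bilinear form on the coroot lattice encoded by the central extension, and the order of multiplication in $h = \prod_{j=N}^{1} w_j^{-\ga_j}$, prescribed by the long word, is precisely what determines the index set $\{\be : \al<_{\i}\be\}$ appearing in the exponent.
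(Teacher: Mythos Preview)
The paper does not supply its own proof of this proposition; it is quoted directly from \cite[Lemma 6.3(2)]{McN} and closed with a \texttt{\textbackslash qedhere}. Your outline is therefore being compared against the argument in McNamara's paper rather than anything in this one, and at the strategic level it matches: run the Iwasawa algorithm, identify the torus component $h=\prod_{j=N}^1 w_j^{-\ga_j^\vee}$, and then track the Brylinski--Deligne cocycle when separating $w_j=\vp^{-m_j}u_j$ into its $\vp$-part and unit part and reordering.

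There is, however, a genuine error in your first paragraph. You assert that the algorithm yields $p_1 = h\cdot u^-$ with $u^-\in U^-(\calo)\subset K$, and then invoke right $K$-invariance. This is not what the algorithm produces. Already for $\SL_2$ with $|t_1|>1$ one gets $p_1 = w_1^{-\al^\vee}x_\al(w_1)$, which lies in $T\cdot U^+$, and the unipotent factor $x_\al(w_1)$ is \emph{not} in $K$ since $|w_1|>1$. In general the $x_{\ga_k}(w_k)$ factors accumulated by the algorithm sit in $U^+$, not $U^-(\calo)$. The correct mechanism is that $p_1\in B$ (the upper Borel, as the paper states just before the algorithm), and $f$ depends only on the torus component of $p_1$ by \emph{left} $U$-equivariance of $f$: writing $p_1 = h\cdot n$ with $n\in U^+$, one has $hn = (hnh^{-1})h$ with $hnh^{-1}\in U^+$, so $f(p_1)=f(h)$ (with the appropriate cocycle tracking in the cover). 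Once this is repaired, your Hilbert-symbol bookkeeping in the second and third paragraphs is the right computation, and the ordering $\al<_{\i}\be$ in the exponent does arise exactly from the product order $\prod_{j=N}^1$ as you say.
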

As we are making use of the identity (\ref{eqn: functional identity}), we pause here to consider the measure relation between the Haar measure $du$ and the natural toric measure on $U^-_{\i}$.  While the full result will not be used in the following, it has the nice consequence that the nonarchimedean Whittaker function may naturally be expressed as a sum of the tropical points of a \emph{geometric crystal}, though we do not consider this expansion here.
\begin{Prop}\label{Prop: measures}
Let $U^-_{w_0}$ be the (open, dense) intersection of the image of the toric charts $y_{\i}$, and let $du$  denote the Haar measure on $U^-$ normalized so that $\mathrm{vol}(U^-(\calo))=1$. Then, we have that for $u= y_{\i}(b_\bullet)$
\begin{equation*}\label{measure change}
du = \prod_{\al\in\Phi^+}|b_\al|^{-\la \rho,\al^\vee\ra}\frac{db_\al}{|b_\al|}.
\end{equation*}

In particular, on the cell $C_0^{\i}(\bm):=C^{\i}(\bm)\cap U^-_{\i}$ such that $m_\al>0$ for each $\al$, if we set $b_\al=\vp^{m_\al}t_\al$, we have
\[
du = q^{\la\rho,\mathrm{wt}(\bm)\ra}\prod_{\al\in\Phi^+}{dt_\al},
\]
where $\mathrm{wt}(\bm) = \sum_{\al\in\Phi^+}m_\al\al^\vee$.

Furthermore, the maps $\eta:U^-_{w_0}\lra U^-_{w_0}$ is unimodular.
\end{Prop}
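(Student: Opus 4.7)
The plan is to reduce all three claims to a Jacobian computation for the toric chart $y_\i$, exploiting $T$-equivariance together with the inductive structure of the product parametrization. For the main identity, I would begin by observing that conjugation by $a\in T$ satisfies $a\,y_\i(b_1,\ldots,b_N)\,a^{-1}=y_\i(\al_{i_1}(a)^{-1}b_1,\ldots,\al_{i_N}(a)^{-1}b_N)$, while $du$ transforms by the modular character $|2\rho(a)|^{\pm1}$ of the adjoint action on $\fu^-$. Writing $y_\i^\ast du=J(b)\prod_k db_k$, these two facts force $J$ to be a positive Laurent monomial in $|b_\bullet|$ up to a $T$-invariant factor; since the rank of $T$ is strictly smaller than $N$, this alone does not pin down $J$. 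The key additional step is to straighten the product $y_{i_1}(b_1)\cdots y_{i_N}(b_N)$ using the commutation relations (\ref{eqn: relation 2}) into $\prod_k y_{\ga_k}(c_k)$, with $\ga_k$ in convex order $<_\i$ and each $c_k$ a polynomial in $b_1,\ldots,b_k$. In the straightened coordinates one has $du=\prod_k dc_k$ by our normalization $\vol(U^-(\calo))=1$, so $J(b)$ is precisely the Jacobian of the triangular change of variables $(b_\bullet)\mapsto(c_\bullet)$; tracking the leading monomials in these corrections then recovers the exponents $-\la\rho,\al^\vee\ra$.

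The second identity will then follow by direct substitution into the first: with $b_\al=\vp^{m_\al}t_\al$ and $|t_\al|=1$, one has $|b_\al|=q^{-m_\al}$ and $db_\al/|b_\al|=dt_\al$, so $|b_\al|^{-\la\rho,\al^\vee\ra}db_\al/|b_\al|=q^{m_\al\la\rho,\al^\vee\ra}dt_\al$; multiplying over $\al\in\Phi^+$ and using $\mathrm{wt}(\bm)=\sum_\al m_\al\al^\vee$ yields the stated formula.

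For the unimodularity of $\eta$, my plan is to apply the first identity in two ways: once via $y_\i$ and once via $z_\i=\psi_{w_0}\circ y_\i=\eta\circ\iota\circ y_\i$. Since $\iota$ is a group anti-automorphism that manifestly preserves the toric measure on $L^{e,w_0}$, comparing the two resulting expressions for $du$ constrains the Jacobian of $\eta$ in toric coordinates to be a character of $T$; together with $\eta^2=\mathrm{id}$ and positivity, this character must be trivial, which gives unimodularity on the dense open $U^-_{w_0}$.

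The main obstacle will be the straightening argument in the first step: performing it uniformly for an arbitrary long word $\i$ requires systematic bookkeeping of the monomial corrections produced by (\ref{eqn: relation 2}), and checking that the aggregated exponents on $|b_\al|$ match $-\la\rho,\al^\vee\ra$ is a combinatorial identity tied to the height function on $\Phi^+$ under the convex enumeration. An attractive alternative is induction on $l(\i)$, peeling off one simple factor at a time and using Fubini to reduce to a rank-one computation, but the scaling of the inner measure as it is conjugated past the outer factor still requires delicate tracking.
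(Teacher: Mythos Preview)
Your approach differs from the paper's. Rather than carrying out the straightening combinatorics you outline, the paper simply observes that the Jacobian matrix of $y_\i$ (and separately that of $\eta$) is the same polynomial matrix that arises in the archimedean setting, and cites Chhaibi \cite{chhaibi2013littelmann} for the fact that both determinants equal $\pm 1$. This yields the first displayed formula and the unimodularity of $\eta$ in one stroke. Your direct route would be more self-contained, but as you acknowledge the exponent bookkeeping under (\ref{eqn: relation 2}) is the entire content; the paper outsources it.

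There is a genuine error in your unimodularity argument: $\eta$ is \emph{not} an involution. What holds (see the discussion preceding Theorem~\ref{Thm: An}) is $\eta^{-1}=\iota\circ\eta\circ\iota$, equivalently $\psi_{w_0}=\eta\circ\iota$ is the involution. Your argument can be repaired along these lines: $\psi_{w_0}^2=\mathrm{id}$ forces $|\mathrm{Jac}(\psi_{w_0})|=1$, and since $\iota$ is unimodular in toric coordinates, so is $\eta=\psi_{w_0}\circ\iota$. But the step just before this---that comparing $du$ through $y_\i$ and through $z_\i$ ``constrains the Jacobian of $\eta$ to be a character of $T$''---is circular as written, since pulling $du$ back along $z_\i=\eta\circ\iota\circ y_\i$ already presupposes knowing how $\eta$ transforms the measure. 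Drop that step and use the involution trick on $\psi_{w_0}$ directly.

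One further subtlety in the second identity: the integers $m_\al$ defining $C^\i(\bm)$ are the valuations in the $z_\i$-chart (Theorem~\ref{Thm: geom algo}), not the $y_\i$-chart, so substituting $b_\al=\vp^{m_\al}t_\al$ directly into the $y_\i$-formula does not a priori parametrize $C_0^\i(\bm)$. The paper handles this by first using the unimodularity of $\eta$ (just established) to transfer the measure formula to the $z_\i$-coordinates, and only then invoking Theorem~\ref{Thm: geom algo} to substitute.
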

\begin{proof}
The first equality follows from a comparison to the archimedean computation of \cite[Theorem 5.1.3]{chhaibi2013littelmann} and \cite[Proposition 5.1.6]{chhaibi2013littelmann}, and noting that ultimately we need to compute the Jacobian of the same matrix. Chhaibi shows that this matrix, as well as the Jacobian of $\eta$, has determinant $\pm1$, so that the measures agree and proves the unimodularity of $\eta$. 

In particular, we see that if we restrict to the open dense set such that $u=z_{\i}(a_\bullet)$, then we have
\[
du = \prod_{\al\in\Phi^+}|a_\al|^{-\la \rho,\al^\vee\ra}\frac{da_\al}{|a_\al|}.
\]
By Theorem \ref{Thm: geom algo}, we may conclude the second equality. 
\end{proof}

 Finally, we consider the character value $\psi_\lam(u)$. For each $i\in I$ let $\mathfrak{s}_i :U^-\lra F$ be the function obtained from the composition
\[
U^-\lra U^-/[U^-,U^-] \cong \prod_{j\in I}U_{\al_j}\lra U_{\al_i}\cong F.
\]
Then $\psi_\lam(u) =\prod_{i\in I}\psi\left(\vp^{w_0\lam_i}\mathfrak{s}_i(u)\right)$ and by Proposition \ref{Prop: independence}, $\mathfrak{s}_i(u)=h_i(t_\al,w_\al)$.  
We now show how to compute the diagonal specialization $h_i(t_\al,t_\al)$ this Laurent polynomial by combining Theorem \ref{Thm: geom algo} with properties of generalized minors. For the moment, let $u\in U$ lie in the unipotent radical of $B^+$\footnote{This is simply to align with the conventions of \cite{BZ01}.}
 Let $\chi=\sum_{i\in I}\mathfrak{s}_i$, so that $\psi(u) = \psi(\chi(u))$. 
Consider the unipotent element $v=\overline{w_0}u\overline{w_0}^{-1}\in U^-$. Assume that $v=\zi\in C^{\i}(\bm)$ as in Theorem \ref{Thm: geom algo}. It follows that
\[
u = \overline{w_0}^{-1}\zi\overline{w_0} = \eta_{w_0}(\overline{w_0}y_{\i^{op}}({b}^{op}_\bullet)\overline{w_0}^{-1}) = \eta_{w_0}(x_{\i^{\ast op}}(-{b}_\bullet^{op})),
\]
where the map $\eta_{w_0}: U\lra U$ indicates 
\begin{equation}\label{conjugate}
u\mapsto \overline{w_0}\eta(\overline{w_0}^{-1}u\overline{w_0})\overline{w_0}^{-1}.
\end{equation}

Writing the integral (\ref{eqn: MV integral}) in terms of the toric measure as in Proposition \ref{Prop: measures}, we may remove the signs by a simple change of variables so that we may as well assume that $u =\eta_{w_0}(x_{\i^{\ast op}}({b}_\bullet^{op})).$

\begin{Prop}\label{Prop: character sum}
Fix a dominant cocharacter $\lam\in X_\ast(T)^+$, a long word $\i\in R(w_0)$ and let $(b_\bullet)\in {(F^\times)}^N$. Set $u = \eta_{w_0}(z)$ where $z=x_{\i^{\ast op}}(b_\bullet^{op})$. Then $\chi(\vp^{\lam}u\vp^{-\lam})$  may be expressed as a Laurent polynomial in the variables $(b_\bullet)$ with positive integer coefficients. 
Writing $\chi(\vp^{\lam}u\vp^{-\lam}) = \sum_{i\in I}\vp^{\la\al_i,\lam\ra}\fs_i(u)$, we have
\begin{equation}\label{eqn: character sum} 
\fs_i(u)=\sum_{\pi:\Lam_i\rightarrow w_0s_i\Lam_i}d_\pi \frac{b_1^{c_1(\pi)}\cdots b_N^{c_N(\pi)}}{b_1^{\la \Lam_i,\be^{\i}_i\ra}\cdots b_N^{\la \Lam_i,\be^{\i}_N\ra}},
\end{equation}\label{equation: trails}
where $\be^{\i}_k:= s_{i_1}\cdots s_{i_{k-1}}(\al^\vee_{i_k})$ and the sum ranges over the $\i$-trails from $\Lam_i$ to $w_0s_i\Lam_i$ and $d_\pi\in \zz_{\geq0}$.
\end{Prop}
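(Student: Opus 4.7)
The plan is to identify $\fs_i$ with a generalized minor and then transfer the computation to $z$ via the twist map, where Theorem~\ref{Thm 5.8} directly produces the trail sum.

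First, since $u \in U^+$, the projection onto the $\al_i$-root subgroup is realized by a matrix coefficient in the fundamental representation $V(\Lam_i)$. Using $\overline{s_i}v_{\Lam_i}= f_i v_{\Lam_i}$ together with $e_i v_{\Lam_i} = 0$ and $h_i v_{\Lam_i}=v_{\Lam_i}$, one sees that the coefficient of $v_{\Lam_i}$ in $u\,\overline{s_i}\,v_{\Lam_i}$ is exactly $\fs_i(u)$, which gives $\fs_i(u) = \Del_{\Lam_i, s_i\Lam_i}(u)$. Conjugation by $\vp^\lam$ then contributes the scalar prefactor $\vp^{\la\al_i,\lam\ra}$ via the bicharacter identity $\Del_{\ga,\de}(t_1 x t_2) = t_1^\ga t_2^\de\,\Del_{\ga,\de}(x)$, so the task reduces to computing $\Del_{\Lam_i, s_i\Lam_i}(u)$ as a function of $b_\bullet$.

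Next, writing $u = \overline{w_0}\,\eta(\overline{w_0}^{-1}z\,\overline{w_0})\,\overline{w_0}^{-1}$, I would invoke the chamber-ansatz identities of \cite{BZ97, BZ01} for how $\eta$ and conjugation by the longest lift interact with generalized minors. The expected outcome is an identity of the shape
\[
\Del_{\Lam_i, s_i\Lam_i}(u) \;=\; \frac{\Del_{\Lam_i,\, w_0 s_i\Lam_i}(z)}{\Del_{\Lam_i,\, w_0\Lam_i}(z)}.
\]
Applying Theorem~\ref{Thm 5.8} to $z = x_{\i^{\ast op}}(b_\bullet^{op})$, the numerator expands as a positive-integer combination of monomials indexed by $\i^{\ast op}$-trails $\Lam_i \to w_0 s_i\Lam_i$ in $V(\Lam_i)$, while the denominator is the single monomial corresponding to the unique trail from $\Lam_i$ to the lowest weight $w_0\Lam_i$, which evaluates to $\prod_k b_k^{\la\Lam_i,\be^{\i}_k\ra}$ after re-indexing via the opposite order on the reduced word.

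Finally, an order-reversal bijection converts $\i^{\ast op}$-trails $\Lam_i\to w_0s_i\Lam_i$ to $\i$-trails $\Lam_i\to w_0 s_i\Lam_i$ in $V(\Lam_i)$ preserving both the exponents $c_k(\pi)$ and the multiplicities $d_\pi$, which yields the stated formula; positivity $d_\pi \in \zz_{>0}$ is inherited from Theorem~\ref{Thm 5.8}. The main technical obstacle will be establishing the twist identity precisely, including careful tracking of signs and of the canonical lifts $\overline{w_0},\overline{s_i}$ under the transpose and positive-twist involutions; once that identity is verified, everything else is a direct application of the cited results.
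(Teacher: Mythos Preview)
Your plan is the paper's plan: identify $\fs_i(u)=\Del_{\Lam_i,s_i\Lam_i}(u)$, push through the twist to a ratio of minors in $z$, evaluate numerator by Theorem~\ref{Thm 5.8} and denominator by the unique trail to the lowest weight. Two points where your outline diverges from what the paper actually does are worth flagging.

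First, the ratio identity. You write the target as $\Del_{\Lam_i,s_i\Lam_i}(u)=\Del_{\Lam_i,w_0s_i\Lam_i}(z)/\Del_{\Lam_i,w_0\Lam_i}(z)$ and defer its proof to the chamber ansatz. The paper instead derives it by a short Gauss--decomposition computation: from $u=[\overline{w_0}^{-1}z^T]_+$ one has
\[
\Del_{\Lam_i,s_i\Lam_i}(u)=\bigl([\,[\overline{w_0}^{-1}z^T]_+\overline{s_i}\,]_0\bigr)^{\Lam_i}
=\frac{\bigl([\overline{w_0}^{-1}z^T\overline{s_i}]_0\bigr)^{\Lam_i}}{\bigl([\overline{w_0}^{-1}z^T]_0\bigr)^{\Lam_i}}
=\frac{\Del_{s_i\Lam_i,w_0\Lam_i}(z)}{\Del_{\Lam_i,w_0\Lam_i}(z)},
\]
so the numerator minor is $\Del_{s_i\Lam_i,w_0\Lam_i}(z)$, not $\Del_{\Lam_i,w_0s_i\Lam_i}(z)$. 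This is not fatal to your approach, but it means your ``expected shape'' is off and you would not find it directly among the chamber-ansatz formulas.

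Second, the passage to $\i$-trails. You propose to apply Theorem~\ref{Thm 5.8} to $z=x_{\i^{\ast op}}(b_\bullet^{op})$ and then invoke an order-reversal bijection sending $\i^{\ast op}$-trails to $\i$-trails with the same exponents and multiplicities. That bijection is not obvious from the definition of trails and would itself need proof. The paper avoids this by using the involution identity $\Del_{\ga,\de}(z)=\Del_{w_0\de,w_0\ga}(x_{\i}(b_\bullet))$ from \eqref{eqn: involutions}, which converts $\Del_{s_i\Lam_i,w_0\Lam_i}(z)$ into $\Del_{\Lam_i,w_0s_i\Lam_i}(x_{\i}(b_\bullet))$, after which Theorem~\ref{Thm 5.8} applies directly with the word $\i$. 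In effect, the involution on minors \emph{is} the justification for the trail bijection you are asserting; citing it makes your final step immediate.
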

\begin{proof}
We begin by noting that we may interpret the functions $\fs_i$ in terms of generalized minors: 
\[
\fs_i(u) =\Del_{\Lam_i,s_i\Lam_i}(u).
\]
Recall that for an element $g\in G_0$, we have the Gauss decomposition $g=[g]_+[g]_0[g]_-$. Conjugation by $w_0$ also gives a decomposition of the form $g=[g]_-[g]_0[g]_+$. Writing $u=\eta_{w_0}(z)$ with $z\in U$, we may combine our definition of $\eta$ with (\ref{conjugate}) to see that $u= [\overline{w_0}^{-1}z^T]_+$. Therefore, we have 
\begin{align*}
\Del_{\Lam_i,s_i\Lam_i}(u) &= \Del_{\Lam_i,s_i\Lam_i}([\overline{w_0}^{-1}z^T]_+)\\
					&=\left([[\overline{w_0}^{-1}z^T]_+\overline{s_i}]_0\right)^{\Lam_i}\\
					&=\left([[\overline{w_0}^{-1}z^T]_0^{-1}[\overline{w_0}^{-1}z^T]_{0+}\overline{s_i}]_0\right)^{\Lam_i}\\
					&= \frac{\left([\overline{w_0}^{-1}z^T\overline{s_i}]_0\right)^{\Lam_i}}{\left([\overline{w_0}^{-1}z^T]_0\right)^{\Lam_i}}\\
					&=\frac{\Del_{s_i\Lam_i,w_0\Lam_i}(z)}{\Del_{\Lam_i,w_0\Lam_i}(z)}.
\end{align*}
Recalling that we have assumed we may write $z=x_{\i^{\ast op}}(b_\bullet^{op})$, the properties of generalized minors (\ref{eqn: involutions}) imply
\[
\Del_{\ga,\de}(z) = \Del_{w_0\de,w_0\ga}(x_{\i}(b_\bullet)).
\] 
Applying Theorem \ref{Thm 5.8} to the minor ${\Del_{s_i\Lam_i,w_0\Lam_i}(z)}$, we have
\begin{equation*}
\Del_{s_i\Lam_i,w_0\Lam_i}(z)=\sum_{\pi:\Lam_i\rightarrow w_0s_i\Lam_i}d_\pi b_1^{c_1(\pi)}\cdots b_N^{c_N(\pi)},
\end{equation*}
where $d_\pi\in\zz_{>0}$.
To conclude we note that $\Del_{\Lam_i,w_0\Lam_i}(z) = \Del_{\Lam_i,w_0\Lam_i}(x_{\i}(b_\bullet))$.  Corollary 9.5 of \cite{BZ01} implies that there is a \emph{unique} $\i$-trail from $\Lam_i$ to $w_0\Lam_i$, so that the denominator is in fact a monomial. More specifically, setting $\be^{\i}_k:= s_{i_1}\cdots s_{i_{k-1}}(\al^\vee_{i_k})$, we have
\begin{equation*}\label{unique trail}
\Del_{\Lam_i,w_0\Lam_i}(x_{\i}(b_\bullet))=\prod_{k=1}^Nb_k^{\la \Lam_i,\be^{\i}_k\ra}.
\end{equation*}\qedhere
\end{proof}

Applying \cite[Theorem 7.8]{McN}, this formula tells us when a cell $C^{\i}(\bm)$ corresponds to an MV cycle in the finite crystal graph $\B(\lam)$ for some dominant coweight $\lam$. 
\begin{Thm}\label{Thm: finite crystal}
Let $\lam=\sum_\al\lam_\al\Lam^\vee_\al\in X_\ast(T)^+$ be a dominant coroot, and let $\B(\lam)$ be the finite highest-weight crystal associated to the complex dual Lie algebra $\fg^\vee$. Then $(\i,\bm)\in \B(\lam)$ if and only if for each $\al\in \Del$ and for each $\i$-trail $\pi:\Lam_\al\rightarrow w_0s_\al\Lam_\al$, the following inequality holds:
\[
\sum_{k=1}^N\la \Lam_\al,\be^{\i}_k\ra m_k\leq \lam_\al +\sum_{k=1}^Nc_k(\pi)m_k.
\]
Here $c_i(\pi)$ are the coefficients associated to the  $\i$-trail $\pi$ as in the definition.
\end{Thm}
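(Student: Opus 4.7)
Plan: The proof combines the trail expansion of $\fs_\al$ from Proposition~\ref{Prop: character sum} with the valuation-theoretic criterion for crystal membership provided by McNamara's Theorem~7.8 of~\cite{McN}: $(\i,\bm)\in\B(\lam)$ is equivalent to a family of inequalities $\val(\fs_\al(u))\geq -\lam_\al$ holding for every simple root $\al$ and every $u\in C^{\i}(\bm)$ (after the usual bookkeeping with the $\rho$-shift). The task is then to transfer each such sum-level inequality into inequalities on the individual monomials that arise in the trail expansion.

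The key identification is to combine Theorem~\ref{Thm: geom algo} with Proposition~\ref{Prop: character sum}. The former identifies the toric coordinates $b_k$ parametrising a generic point of $C^{\i}(\bm)$ as elements of valuation exactly $m_k$, and the latter then assigns to each $\i$-trail $\pi\colon \Lam_\al\to w_0s_\al\Lam_\al$ a monomial $X_\pi$ of tropical valuation
\[
v(\pi) \;=\; \sum_{k=1}^N\bigl(c_k(\pi)-\la \Lam_\al,\be^{\i}_k\ra\bigr)\,m_k.
\]
The trail inequality in the theorem statement is precisely $v(\pi)\geq -\lam_\al$. For the forward direction, the coefficients $d_\pi\in\zz_{>0}$ have non-negative valuation, so if every monomial satisfies $v(\pi)\geq -\lam_\al$ then the ultrametric inequality yields $\val(\fs_\al(u))\geq -\lam_\al$ on the image of the toric chart, and this extends to a full-measure subset of $C^{\i}(\bm)$ by the density results underlying Proposition~\ref{Prop: independence}; McNamara's criterion then delivers $(\i,\bm)\in\B(\lam)$.

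For the converse, the subtlety is the possibility of cancellation among monomials of equal minimum valuation (the resonance phenomenon central to the paper). However, on the Zariski-dense open subset of the toric chart where the mod-$\vp$ residues of the unit parts of $b_k$ avoid the vanishing locus of the leading symbol of $\fs_\al$, no such cancellation occurs and $\val(\fs_\al(u))=\min_\pi v(\pi)$. McNamara's bound on the sum then forces $\min_\pi v(\pi)\geq -\lam_\al$, which is exactly the trail inequality for every $\pi$. The main obstacle is handling the boundary cells in which some $m_k=0$, where Theorem~\ref{Thm: geom algo} does not directly identify $\val(b_k)$ with $m_k$; I would resolve this using Proposition~\ref{Prop: independence}, which guarantees that the Laurent polynomial expressing $\fs_\al$ in the algorithmic variables $(t_\al,w_\al)$ is uniform across all cases, so the boundary cells are handled by a specialisation argument from the generic stratum.
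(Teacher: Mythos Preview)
Your proposal is correct and follows precisely the paper's approach: the paper states (in the sentence immediately preceding the theorem) that the result follows by combining Proposition~\ref{Prop: character sum} with \cite[Theorem~7.8]{McN}, and gives no further argument. The details you supply---the ultrametric bound for the forward direction, the genericity argument ruling out cancellation for the converse, and the specialisation to boundary cells via Proposition~\ref{Prop: independence}---are exactly what the paper leaves implicit.
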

\begin{Rem}
The set of $\i$-trails here giving the upper bounds of the finite crystal graph are precisely the same as those shown in \cite[Theorem 3.10]{BZ01} to parametrize the $\i$-string cone, reflecting the duality between the two parametrizations of canonical bases.
\end{Rem}

\subsection{A monomial change of variables}\label{Section: boundary}
We noted in Proposition \ref{Prop: independence} that the Laurent polynomial $\fs_i(u)=h_i(t_\al,w_\al)$ is independent of $\bm$. The relations between the variables $t_k$ and $w_k$ on the other hand do depend on the $\i$-Lusztig data. We now discuss a method of recovering a Laurent polynomial $g_i(\underline{x},\underline{y})$ such that under the diagonal specialization $g_i(\underline{x},\underline{x})=h_i(\underline{x},\underline{x})$ which in a precise way recovers as much of $h_i(\underline{x},\underline{y})$ as possible. In fact, $g_i(\underline{x},\underline{y})=h_i(\underline{x},\underline{y})$ for certain reduced words, including the long word for $G_2$ we consider in Section \ref{Section: G2 integrals}.

Let $\ga_k\in \Phi^+$, and suppose $\al_i$ is the unique simple root such that $\al_i<\ga_k$, and no simple root lies between these roots. Choose a monomial $X_k$ of $\mathfrak{s}_i(u)$ of the form
\[
X_k = \frac{b_1^{c_1(\pi)}\cdots b_N^{c_N(\pi)}}{b_1^{\la \Lam_\al,\be^{\i}_1\ra}\cdots b_N^{\la \Lam_\al,\be^{\i}_N\ra}},
\]
where
\begin{equation}\label{desiderata 1}
c_j(\pi) = {\la \Lam_\al,\be^{\i}_j\ra}\:\:\text{ for each}\:\: j<k,\: \text{ and }\:\: c_k(\pi)={\la \Lam_\al,\be^{\i}_k\ra}-1.
\end{equation}
In other words, if we express $X_k$ in reduced form, then $b_k$ is the lowest index variable in the denominator of $X_k$, and its exponent is $1$. 
This choice is made so that if $Y_k$ is the monomial of $h_i(t_\al,w_\al)\in \zz[t_\al,w_\al^{\pm1}]$ specializing to $X_\ga$, then $Y_k$ be of degree $1$ in the $t_\al$-variables with only $t_{k}$ appearing; that is,
\begin{equation}\label{desiderata 2}
Y_k= t_{k}\prod_{j}w_j^{m(k,j)}
\end{equation}
for some $m(k,j)\in \zz$. Combining this with Theorem \ref{Thm: geom algo} and (\ref{desiderata 1}), we may assume $m(k,j)=0$ for $j\leq k$. 
It is easy to show that such a monomial exists; in general, it is not unique.

 By comparing (\ref{desiderata 2}) to Proposition \ref{Prop: character sum}, it is straightforward to determine the corresponding $\i$-trail from the exponents. Fix such a choice of monomials $\{X_k\}_{i=1}^N$ (for example, by ordering $\i$-trails lexicographically, we may choose the greatest possible element as our $X_k$). This associates to each positive root $\ga_k\in \Phi^+$ an $\i$-trail $\pi$ and induces a monomial change of variables
 \[
 \mathfrak{s}_i(u)=\sum_{\pi:\Lam_i\rightarrow w_0s_i\Lam_i}d_\pi X_{1}^{c_\pi(1)}\cdots X_{N}^{c_\pi(N)},
 \]
 for some $c_\pi(k)\in \zz$ such that for each $\pi$
\[
\sum_{k=1}^Nc_\pi(k)=1,
\]
since the total degrees of the monomials is $-1$ in the $b_\bullet$-variables. Define $s_k =\lam_{i^\ast}+\val(X_k)$, so that $X_k=\vp^{s_k-\lam_{i^\ast}}W_k.$ Applying Proposition \ref{Prop: measures}, for $\bm$ such that $m_k>0$ for all $k$,  
 \begin{equation}\label{eqn: general formula}
 I_\lam(\bm)= \displaystyle\prod_{\al}x_{\al}^{m_\al}\int_{(\calo^\times)^N}\prod_{k}(W_k,\vp)^{r_k}\prod_{i\in\Del}\psi_\lam\left(\sum_{\pi:\Lam_i\rightarrow w_0s_i\Lam_i}d_\pi\vp^{n(\pi)} W_{1}^{c_\pi(1)}\cdots W_{N}^{c_\pi(N)}\right)\prod_{k}dW_k,
 \end{equation}
 for uniquely determined $r_k, n(\pi)\in \zz$. Note that $(\i,bm)\in \B(\lam+\rho)$ if and only if $n(\pi)\geq-1$ for all $\i$-trails. As we shall see in the next section, unlike the type $A$ case in \cite{McN}, it is not true that this may be written as a product over the positive roots without further analysis. For any given case, this formula allows one to show that $I_\lam(\bm)=0$ for \emph{generic} $\bm\notin \B(\lam+\rho)$. It is non-trivial to determine the precise locus of non-vanishing; see Section \ref{Section: resonance} below.

Finally, we define
 \[
 g_i(t_\al,w_\al)=\sum_{\pi:\Lam_i\rightarrow w_0s_i\Lam_i}d_\pi Y_{1}^{c_\pi(1)}\cdots Y_{N}^{c_\pi(N)}\prod_k\left(\frac{t_k}{w_k}\right)^{-\min\{0,c_\pi(k)\}}.
 \]
 By construction, it is clear that $g_i(t_\al,t_\al)=h_i(t_\al,w_\al)$. 
 In general, it is not true that $g_i(t_\al,w_\al) =h_i(t_\al,t_\al)$: there may be several distinct monomials in $h_i(t_\al,w_\al)$ which specialize to a given monomial $X_k$. This occurs with the other word $\i=(1,2,1,2,1,2)$ in type $G_2$. Additionally, it can happen that there are pairs of monomials in $h_i(t_\al,w_\al)$ which cancel each other out upon specialization. This happens in type $A_3$ with respect to the long word $\i=(1,2,3,2,1,2)$. Nevertheless, we expect that the $I_\lam(\bm)$ can always be computed in terms of $g_i(t_\al,w_\al)$.
\begin{Conj}\label{Conjecture}
Fix the root system $\Phi$ and the long word $\i$. Let $\bm\in \B(-\infty)$ be an $\i$ -Lusztig datum, and let $\lam$ be a dominant coweight. Then
\begin{equation}\label{conj integral}
I_\lam(\bm)=  \int_{C^{\i}(\bm)}f(u)\psi\left(\sum_{i\in I}\vp^{\lam_{i^\ast}}g_i(t_\al,w_\al)\right)du.
\end{equation}
In other words, we may replace the sum over the Laurent polynomials $h_i(t_\al,w_\al)$ with the sum over the polynomials $g_i(t_\al,w_\al)$ so that the natural generalization of the general formula (\ref{eqn: general formula}) holds for all $\i$-Lusztig data.
\end{Conj}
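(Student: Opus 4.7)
My approach would be to prove the conjecture cell-by-cell, showing that for every $\bm \in \zz^N_{\geq 0}$ one may replace $h_i$ by $g_i$ in the integrand of $I_\lam(\bm)$ without changing the integral. Fix $\bm$ and decompose the coordinate indices as $[N] = P \sqcup Z$ with $P = \{k : m_k > 0\}$ and $Z = \{k : m_k = 0\}$. By Theorem \ref{Thm: geom algo} together with the case distinction in Algorithm \ref{Algo 2}, on the cell $C^{\i}(\bm)$ we have $w_k = t_k$ for $k \in P$ (both having valuation $-m_k$) while $w_k = 1$ and $t_k \in \calo$ for $k \in Z$. For totally positive $\bm$ (i.e.\ $Z = \emptyset$) the two Laurent polynomials $h_i$ and $g_i$ agree on the cell by construction, and Proposition \ref{Prop: some words} already covers a further class of words. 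The real content of the conjecture lies in the remaining cells, where the deviation between $h_i$ and $g_i$ is supported on the $Z$-variables.

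The next step is to organize the discrepancy $h_i(t_\al, w_\al) - g_i(t_\al, w_\al)$. Since $h_i(t,t) = g_i(t,t)$, this difference is divisible (as a Laurent polynomial) by $\prod_k (t_k - w_k)^{e_k}$ in a controlled way: explicitly, one can write
\[
h_i(t_\al, w_\al) - g_i(t_\al, w_\al) = \sum_{k} (t_k - w_k) \, R_k(t_\al, w_\al)
\]
with each $R_k$ a Laurent polynomial whose support I would track via the combinatorics of $\i$-trails in Proposition \ref{Prop: character sum}. On the cell, the factor $(t_k - w_k)$ vanishes for $k \in P$ and specializes to $t_k - 1 \in \calo$ for $k \in Z$, so the entire discrepancy lies in the $\calo$-ideal generated by $\{t_k - 1 : k \in Z\}$ after restriction.

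The heart of the argument is then to show that $\psi\bigl(\vp^{\lam_{i^\ast}}(h_i - g_i)\bigr)$ integrates to the trivial contribution across $C^{\i}(\bm)$. Two phenomena must be handled. First, \emph{collisions}, where several monomials of $h_i(t,w)$ specialize to the same monomial of $h_i(t,t)$; here the distinguished lift chosen in $g_i$ differs from the alternative lifts only by factors of the form $(t_j/w_j) - 1$ with $j \in Z$, which become $t_j - 1$ on the cell. Averaging against $\psi$ over $t_j \in \calo$ either annihilates these contributions outright (when the overall valuation exceeds the conductor threshold forced by Theorem \ref{Thm: finite crystal}) or collapses them to a Gauss-sum factor that I would argue is already part of the expression captured by $g_i$. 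Second, \emph{cancellations}, where a pair of monomials of $h_i(t,w)$ cancel upon diagonal specialization but not on the cell; here I would pair the two monomials and argue that their combined $w$-dependence on the cell ensures their residual contribution is again supported in $\vp^{-\lam_{i^\ast}}\calo$, so the additive character trivializes.

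The principal obstacle is controlling the combinatorics of collisions and cancellations uniformly in $\i$. A purely representation-theoretic characterization of which $\i$-trails collide (presumably in terms of the exchange relations among generalized minors from \cite{BFZ}) would give a clean proof, but absent such a characterization one is forced into a case analysis indexed by the reduced word. I would first verify the conjecture for the outstanding $\SL(4)$ word as suggested by Appendix \ref{App: type A}, then attempt an induction on $\ell(w_0)$ using the tower of toric charts, at each step checking that the new $\i$-trails contributed by an additional simple reflection only produce discrepancies of the form controlled above. I expect the deepest step to be showing that the Berenstein-Zelevinsky relations are compatible with the additive character in precisely the way demanded by the conjecture; establishing this compatibility would likely also clarify which resonant structures on MV polytopes, in the sense of Section \ref{Section: resonance}, are visible through the geometric expansion in general type.
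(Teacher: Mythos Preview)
The statement is a \emph{conjecture}; the paper does not prove it in general and only verifies the single outstanding $\SL(4)$ case in Appendix~\ref{App: type A}. So your proposal is a strategy toward an open problem, and the relevant comparison is with the mechanism the paper uses in that one verified case.

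Your central step---that on a cell with $Z \neq \emptyset$ the discrepancy $h_i - g_i$ lies in $\calo$ (or averages out over the $Z$-variables)---already fails in the $A_3$ example. There $h_1 - g_1 = \dfrac{t_3(w_4 - t_4)}{w_6}$, and on a cell with $m_4 = 0$ this specializes to $\dfrac{t_3(1 - t_4)}{w_6}$, of valuation $-m_3 + m_6 + \val(1 - t_4)$, which is typically negative. The factor $(t_k - w_k)$ for $k \in Z$ does land in $\calo$, but the accompanying $R_k$ can have arbitrarily negative valuation (it involves $t_j, w_j$ with $j \in P$), so the discrepancy is not small and $\psi$ does not trivialize on it. Nor does integrating in $t_4 \in \calo$ kill it: the $t_4$-dependence is entangled with the rest of the integrand through the other terms of $h_1$ and $h_3$.

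What actually happens in the appendix is a \emph{multiplicative} absorption rather than an additive one. The full $t_3$-coefficient in $h_1$ on that cell is $1 + (1 - t_4)/w_6$; when $m_6 > 0$ this lies in $1 + \vp\calo \subset \calo^\times$, and the unimodular change of variables $t_3 \mapsto t_3 / (1 + (1 - t_4)/w_6)$ converts the $h_1$-integrand into the $g_1$-integrand without touching the measure or the other terms. When $m_6 = 0$ and this unit condition fails, the paper shows separately that both the $h$- and $g$-integrals vanish via the $t_2$-integration. Neither step fits your framework: the first is not about the discrepancy being $\calo$-small or cancelling in a Gauss sum, and the second is a coincidental common vanishing. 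A general argument would need to explain why such a unit change of variables (or a matching vanishing) is always available, and your decomposition $h_i - g_i = \sum_k (t_k - w_k) R_k$ does not produce one.
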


In Appendix \ref{App: type A}, we verify the conjecture for the only word of type $A_3$ for which $g_i(t_\al,w_\al)\neq h_i(t_\al,w_\al)$. For certain words $\i$, this conjecture is vacuously true as one may argue in terms on the properties of the induced ordering of the positive roots to see that indeed $g_i(t_\al,w_\al)=h_i(t_\al,w_\al)$.
 
 \begin{Prop}\label{Prop: some words}
 For the Gelfand-Tsetlin word $\i=(1,2,\ldots,r,1,2,\ldots,1,2,1)$  in type $A_r$, both long words of type $B_2$, and the long word $\i=(2,1,2,1,2,1)$ for $G_2$ where $\al_1$ (resp. $\al_2$) is the long (short) simple root, we have that $g_i(t_\al,w_\al)=h_i(t_\al,w_\al)$ for all $i\in I$.
 \end{Prop}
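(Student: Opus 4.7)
The strategy is to show in each case that the Laurent polynomial $h_i(t_\alpha,w_\alpha)$ produced by Algorithm \ref{Algo 2} has monomials in bijection with the $\underline{i}$-trails $\pi\colon \Lambda_i\to w_0s_i\Lambda_i$ of Proposition \ref{Prop: character sum}, with matching positive integer coefficients $d_\pi$ on both sides. Since $g_i(t_\alpha,w_\alpha)$ is by construction the Laurent polynomial obtained by lifting each monomial $d_\pi X_1^{c_\pi(1)}\cdots X_N^{c_\pi(N)}$ of $h_i(t_\alpha,t_\alpha)$ to a monomial $d_\pi Y_1^{c_\pi(1)}\cdots Y_N^{c_\pi(N)}$ of the shape prescribed by (\ref{desiderata 2}), establishing this trail-to-monomial bijection (and showing no cancellations occur under the specialization $w_\alpha\mapsto t_\alpha$) immediately gives $g_i=h_i$.

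First I would describe how monomials of $h_i(t_\alpha,w_\alpha)$ arise from the algorithm. Running Algorithm \ref{Algo 2} on $u=\prod_{j=1}^Ny_{\gamma_j}(x_j)$, each step applies the commutator relations (\ref{eqn: relation 2}) and produces a signed Laurent polynomial in $t_\alpha,w_\alpha^{\pm 1}$. The positivity assertion of Proposition \ref{Prop: character sum} already guarantees that the diagonal specialization $h_i(t_\alpha,t_\alpha)$ is a positive sum indexed by trails, so the only way $g_i\neq h_i$ is that two distinct monomials in $h_i(t_\alpha,w_\alpha)$ share the same specialization (which either collapses them or produces signed cancellations). Ruling this out is a combinatorial statement about the interaction between the convex order $<_{\underline{i}}$ and the Chevalley structure constants.

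For the Gelfand--Tsetlin word in type $A_r$ the $\mathfrak{sl}_{r+1}$ commutators introduce at most one new root group with coefficient $\pm 1$, and the lexicographic ordering makes the algorithm triangular: each trail tracks a unique sequence of commutations and produces a distinct monomial, essentially recovering the bookkeeping of \cite{McN}. For either long word of $B_2$, only four positive roots appear, and the verification reduces to a direct enumeration of the (at most a few) trails in each fundamental representation against the output of the algorithm. For $G_2$ with $\underline{i}=(2,1,2,1,2,1)$, one runs the algorithm in full and cross-checks the monomials against the trails in $V(\Lambda_1)$ and $V(\Lambda_2)$, which are enumerated explicitly in Section \ref{Section: G2 integrals}.

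The main obstacle is the $G_2$ case: the Chevalley commutators in $G_2$ produce new root groups with structure constants of mixed degrees up to three, so \emph{a priori} many terms could collide under diagonalization. The point of the specific ordering $(2,1,2,1,2,1)$ — starting from the short simple root and alternating — is that each positive root appears at a distinguished first moment in the algorithm, forcing the choice (\ref{desiderata 2}) of lifting monomial $Y_k$ to be uniquely determined and preventing any two trails from producing the same $(t_\alpha,w_\alpha)$-monomial. The verification amounts to checking that in each of the (finitely many) commutations carried out by the algorithm, the indexing of the new monomial by a trail matches (\ref{desiderata 2}); because $G_2$ has rank $2$ and the two fundamental representations have small dimension, this is a tractable, if tedious, finite check.
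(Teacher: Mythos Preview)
Your proposal is correct and follows the same approach the paper indicates. The paper does not give a formal proof of this proposition; it merely remarks that ``one may argue in terms of the properties of the induced ordering of the positive roots to see that indeed $g_i(t_\alpha,w_\alpha)=h_i(t_\alpha,w_\alpha)$,'' and later carries out the explicit $G_2$ computation in Section~\ref{Section: G2 integrals}, which is exactly the verification you describe.
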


 Assuming Conjecture \ref{Conjecture} for a given long word $\i$, we may now compute the associated MV integrals. This requires determining the relevant set of $\i$-trails, which is already formidable in general. In the following section, we consider the case of $G_2$ and $\i=(2,1,2,1,2,1)$ in detail. The analysis isolates certain families of Lusztig data \emph{outside} of the finite crystal $\B(\lam+\rho)$ which contribute to the final sum: these correspond to resonant MV polytopes. We give a general definition and discussion on this notion in Section \ref{Section: resonance}. 

\section{MV integrals for $G_2$}\label{Section: G2 integrals}
In this section, we apply the results of the previous sections to the case of the exceptional group $G_2$ with the long word $\i=(2,1,2,1,2,1)$, where $\al_1$ is the long simple root and $\al_2$ is the short simple root. For a fixed dominant coweight $\lam$, we then compute the MV integrals, finding that there are infinitely many \emph{resonant} Lusztig data outside the finite crystal $\B(\lam+\rho)$ with non-zero contribution. For simplicity, we only state results for the non-covering ($n=1$) case; for higher degree covers, the integrals are computable but the exponential sums that appear depend on the parity of the cover and on other divisibility constraints.  In the next section, we show how a new crystal graph structure on these resonant MV polytopes allows us to derive a Tokuyama-type formula for $G_2$.
\newpage
\begin{center}
\includegraphics[scale=0.35]{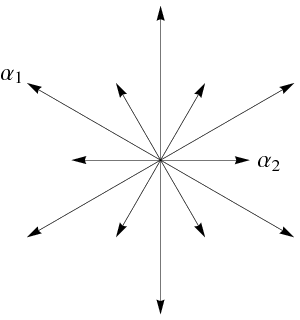}
 \captionof{figure}{$\Phi=G_2$}
\end{center}
\subsection{Preliminaries}
For our choice of $\i$, the corresponding convex ordering of the positive roots $\{\ga_i\}$ is 
\[
\al_2<\al_1+3\al_2<\al_1+2\al_2<2\al_1+3\al_2<\al_1+\al_2<\al_1.
\]

 Setting $u=\eta_{w_0}(x_{\i^{\ast op}}(b_\bullet^{op}))$ as in the proof of Proposition \ref{Prop: character sum}, the necessary generalized minors are 
\[
\Del_{\Lam_k,w_0s_k\Lam_k}(x_{\i_M}(b_\bullet)), \quad \text{and}\quad \Del_{\Lam_k,w_0\Lam_k}(x_{\i_M}(b_\bullet)),
\]
for $k=1$ and $2$. For $k=1$, there is a single $\i$-trail, so that
\begin{equation*}
\Del_{\Lam_1,w_0s_1\Lam_1}(x_{\i_M}(b_\bullet))= b_2b_3^3b_4^2b_5^3,\quad\Del_{\Lam_1,w_0\Lam_1}(x_{\i_M}(b_\bullet))=b_2b_3^3b_4^2b_5^3b_6.
\end{equation*}
Thus, 
\begin{equation*}
\Del_{\Lam_1,s_1\Lam_1}(u) =\frac{b_2b_3^3b_4^2b_5^3}{b_2b_3^3b_4^2b_5^3b_6}=\frac{1}{b_6},
\end{equation*}
which also illustrates the general inductive nature of these computations \cite[Prop. 9.4]{BZ97}. For $k=2$, there are $6$ $\i$-trails from $\Lam_2$ to $w_0s_2\Lam_2$, so that 

\begin{eqnarray*}
\Del_{\Lam_1,w_0s_1\Lam_1}(x_{\i_M}(b_\bullet))=b_1b_2b_3^2b_4+ b_1b_2b_5^2b_6+ b_1b_4b_5^2b_6
									        + 2b_1b_2b_3b_5b_6+ b_1b_2b_3^2b_6+b_3b_4b_5^2b_6.	
\end{eqnarray*}

In general, there is no known formula for the positive integral coefficients arising in generalized minors. For the purposes of $p$-adic integrals, this is not a central issue provided the residue characteristic of $F$ is sufficiently high. In any case, for $G_2$ a simple matrix calculation allows us to find the coefficients above.

Finally, $\Del_{\Lam_2,w_0\Lam_2}(x_{\i_M}(b_\bullet))=b_1b_2b_3^2b_4b_5$, so that
\begin{eqnarray*}
\Del_{\Lam_2,s_2\Lam_2}(u)&= \displaystyle\frac{1}{b_5}+\frac{b_5b_6}{b_3^2b_4}+\frac{b_5b_6}{b_2b_3^2}
					+ 2\frac{b_6}{b_3b_4}+\frac{b_5b_6}{b_1b_2b_3}+\frac{b_6}{b_4b_5}.
\end{eqnarray*}

The monomial change of variables is unique in this case, so we set
\begin{align*}
& X_1= \frac{b_5b_6}{b_1b_2b_3}, \quad X_4=\frac{b_6}{b_4b_5},
\quad X_2 = \frac{b_5b_6}{b_2b_3^2},\\ 
& X_5=\frac{1}{b_5},t
\quad X_3 = \frac{b_6}{b_3b_4}, \quad X_6=\frac{1}{b_6}.\\
\end{align*}
With this change of variables, we rewrite our character entry in terms of the $X_i$:
\begin{align*}
\Del_{\Lam_2,s_2\Lam_2}(u)=X_1&+X_2+2X_3+\frac{X_3^2}{X_4}+X_4+X_5,\\
&\Del_{\Lam_1,s_1\Lam_1}(u)=X_6
\end{align*}

As remarked in the previous section, we may recover the complete Laurent polynomials $h_i(t_\al,w_\al)$ in this case. In particular, we find that $h_1(t_\al,w_\al)=t_6$ and
\[
h_2(t_\al,w_\al)=\displaystyle\frac{t_1w_2w_3}{w_5w_6}+\frac{t_2w^2_3}{w_5w_6}+t_5+2\frac{t_3w_4}{w_6}+\frac{t_4w_5}{w_6}+\frac{t_3^2w_4}{w_5w_6}.
\]

Taking valuations of the monomials, we have the bounding data $s_\al=\val(\vp^{\lam_i}X_\al)$:
\begin{align*}
s_1=\lam_2+m_5+m_6-m_1-m_2-m_3, &\quad s_2=\lam_2+m_5+m_6-m_2-2m_3,\\
s_3=\lam_2+m_6-m_3-m_4,&\quad s_4=\lam_2+m_6-m_4-m_5,\\
s_5=\lam_2-m_5,&\quad s_6=\lam_1-m_6,\\
2s_3-s_4=\lam_2+m_5&+m_6-m_4-2m_3.
\end{align*}

 By Theorem \ref{Thm: finite crystal}, $(\i,\bm)\in \B(\lam+\rho)$ if and only if $s_\al\geq -1$ for each $\al\in \Phi^+$ along with the additional requirement that $2s_3 -s_4\geq -1$.
Given a $\i$-Lusztig datum $\bm$, we say that the positive root $\ga_k$ is \emph{circled} if $m_k=0$. This is motivated by the combinatorial decorations of \cite{BBF} in type A, and refers to those elements of the finite crystal $\B(\lam+\rho)$ which are minimal in sense that they lie on a lower bounding hyperplane for the crystal. Dually, we say that the positive root $\ga_k$ is \emph{boxed} when the corresponding element of the finite crystal lies on a bounding hyperplane coming from the restriction to the highest weight $\lam+\rho$: this means $s_k=-1$. We remark that the terminology of circling and boxing patterns is not sufficient for the additional restriction $2s_3-s_4=-1$.

Finally, we will see below that those Lusztig data for which  
\[
\val(X_3)=\val(X_4)=\val\left(\frac{X_3^2}{X_4}\right),
\]
will play an important role in the evaluation outside $\B(\lam+\rho)$; this is equivalent to the condition $m_3=m_5$, and we say that such a Lusztig datum (or the corresponding MV polytope $M$) is \textbf{resonant} with respect to $\i$ (or simply resonant). For reasons that will become clear in the next section, we say that $M$ is \textbf{totally resonant} if it has the above resonance and $m_2=m_6$ as well.
 \begin{figure}
\centering
\begin{minipage}{.5\textwidth}
  \centering
  \includegraphics[width=.6\linewidth]{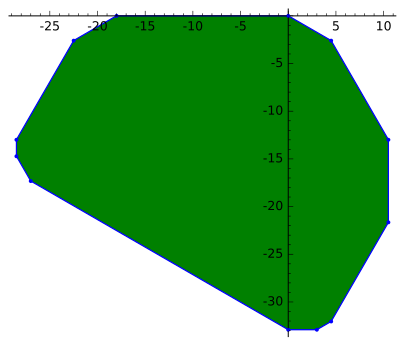}
  \caption{Resonant MV polytope}
  \label{fig:test1}
\end{minipage}%
\begin{minipage}{.5\textwidth}
  \centering
  \includegraphics[width=.6\linewidth]{mv_G1}
  \caption{Totally resonant MV polytope}
  \label{fig:test2}
\end{minipage}
\end{figure}

\subsection{Analysis of the MV integrals for $n=1$}

Our primary motivation is to find generalized Tokuyama-type formulas and study the relation to canonical bases. For this reason, we shall only focus on the $n=1$ case and consider the integral $I_\lam(\bm)$ given by
\begin{equation*}
\prod_{\al\in \Del^+}(q^{-1}x_\al)^{m_\al}\int_{C^{\i}(\bm)} \psi\left(\vp^{\lam_1}t_6+\vp^{\lam_2}\left(\frac{t_1w_2w_3}{w_5w_6}+\frac{t_2w^2_3}{w_5w_6}+t_5+2\frac{t_3w_4}{w_6}+\frac{t_4w_5}{w_6}+\frac{t_3^2w_4}{w_5w_6}\right)\right)dt.
\end{equation*}
We must consider cases, depending on certain resonances; the results are summarized in Theorem \ref{Thm: all together}. Before proceeding, we introduce some more notation. Define
\begin{equation*}
I(a,b) = \begin{cases} q^{a+b}\int_{\vp^a\calo}\psi(t)dt \qquad\;\:: b=0,\\
					q^{a+b}\int_{\vp^a\calo^\times}\psi(t)dt\qquad: b>0.\end{cases} 
\end{equation*}
Note that $I(a,b) = 0$ if $a<-1$ or if $a=-1$ and $b=0$. This simple integral controls much of the behavior of $I_\lam(\bm)$. For example,  the first vanishing statement above often tells us that $I_\lam(\bm)=0$ if $(\i,\bm)\notin \B(\lam+\rho)$. The second property corresponds to the vanishing of terms associated to non-strict Gelfand-Tsetlin patterns.

We begin with the change of variables
\[
X_1=\vp^{\lam_2}\frac{t_1w_2w_3}{w_5w_6},\quad\text{and}\quad X_2 = \vp^{\lam_2}\frac{t_2w_3^2}{w_5w_6},
\]
so that
\begin{equation*}\label{eqn: factor out 1,2}
I_\lam(\bm) = I(s_1,m_1)I(s_2,m_2) J_\lam(\bm)\prod_{\al}(q^{-1}x_{\al})^{m_\al},
\end{equation*}
where
\begin{equation*}\label{eqn: J integral}
J_\lam(\bm)= \int\int\int\int \psi\left(\vp^{\lam_1}t_6+\vp^{\lam_2}\left(t_5+2\frac{t_3w_4}{w_6}+\frac{t_4w_5}{w_6}+\frac{t_3^2w_4}{w_5w_6}\right)\right)dt.
\end{equation*}
This latter integral controls the behavior of the MV integrals as we vary over circling patterns. 
 We recall the Tokuyama function $G(\bm)$\footnote{Here we ignore the dependence on $\i$.}, where 
\begin{equation*}
G(\bm) = \prod_{\al\in \Del^+}G(s_\al,m_\al)
\end{equation*}
where 
\begin{equation*}
G(s_\al,m_\al)=\begin{cases} \qquad\quad1-q^{-1} \qquad\;: m_\al>0, s_\al\geq 0\\ \qquad\qquad-q^{-1}\qquad\;: m_\al>0, s_\al=-1,\\\qquad\qquad 1\qquad\qquad: m_\al=0, s_\al\geq 0,\\\qquad\qquad 0 \qquad\qquad:\text{otherwise}.\end{cases}
\end{equation*}
Fix the notation $x^{k(\bm)}=x_1^{k_1}x_2^{k_2}:=\prod_\al x_{\al}^{m_\al}$. 
In particular, if $2s_3-s_4\geq0$ we have that 
\begin{equation}\label{standard contribution}
I_\lam(\bm) = G(\bm)x_1^{k_1}x_2^{k_2}.
\end{equation}
 This is referred to as the standard contribution, as it describes the contributions arising in the classical Gelfand-Tsetlin case in type A \cite{Tok,BBF,McN}. 
There are many cases when $2s_3-s_4<0$ that (\ref{standard contribution}) holds. A useful observation is that certain circling patterns force this to be the case.
\begin{Lem}\label{Lem: simplifying}
If $m_4=0$, then $I_\lam(\bm)$ matches the standard contribution.
\end{Lem}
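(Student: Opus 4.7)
The approach is to reduce to the standard contribution formula (\ref{standard contribution}), which is already established under the hypothesis $2s_3 - s_4 \geq 0$. If $2s_3 - s_4 \geq 0$ then (\ref{standard contribution}) applies directly and there is nothing to prove, so it suffices to analyze the case $2s_3 - s_4 \leq -1$. I will show that in this remaining case both $I_\lam(\bm)$ and its standard contribution vanish identically, using only the linear relations among the $s_\al$ implied by $m_4 = 0$.

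The key observation is that when $m_4 = 0$, direct substitution into the formulas for $s_\al$ recorded above gives
\[
s_2 = \lam_2 + m_5 + m_6 - m_2 - 2m_3 = (2s_3 - s_4) - m_2.
\]
Thus when $2s_3 - s_4 \leq -1$ and $m_2 \geq 0$ we have $s_2 \leq -1$, and the two subcases are either $s_2 \leq -2$, or $s_2 = -1$ with $m_2 = 0$. In either subcase the definition of $I(s,m)$ yields $I(s_2, m_2) = 0$, and the definition of the Tokuyama function yields $G(s_2, m_2) = 0$.

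From the factorization
\[
I_\lam(\bm) = I(s_1,m_1)\, I(s_2,m_2)\, J_\lam(\bm) \prod_{\al\in\Phi^+}(q^{-1}x_\al)^{m_\al}
\]
and the product expression $G(\bm) = \prod_{\al\in\Phi^+} G(s_\al, m_\al)$, the vanishing of the $s_2$-factor forces both $I_\lam(\bm) = 0$ and $G(\bm)\,x^{k(\bm)} = 0$, completing the proof. The only substantive step is identifying the linear relation $s_2 = (2s_3 - s_4) - m_2$ under the $m_4 = 0$ specialization; once this is noted, no analysis of the inner integral $J_\lam(\bm)$ or of the troublesome quadratic term $\vp^{\lam_2}t_3^2/(w_5w_6)$ in the character is required. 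This explains how circling the positive root $\ga_4 = 2\al_1 + 3\al_2$ forces the integral into the regime where the standard contribution already applies.
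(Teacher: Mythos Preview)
Your proof is correct and follows essentially the same argument as the paper: both identify the relation $2s_3 - s_4 = s_2 + m_2$ when $m_4=0$ (you write it as $s_2 = (2s_3-s_4)-m_2$), and then use the vanishing of $I(s_2,m_2)$ in the two resulting subcases. Your version is slightly more explicit in also verifying that $G(s_2,m_2)=0$, so that the standard contribution itself vanishes, which is exactly what ``matches'' means here.
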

\begin{proof}
As noted above, if $2s_3-s_4\geq 0$, this is already known. Now suppose that $m_4=0$ and $2s_3-s_4<0$. Then we observe that $$2s_3-s_4=s_2+m_2<0.$$ Since (\ref{eqn: factor out 1,2}) shows that $I(s_2,m_2)$ is a factor of $I_\lam(\bm)$ for an arbitrary $(\i,\bm)\in\B(\infty)$, we know that $s_2< -1$ forces $I_\lam(\bm)=0,$ which matches the standard contribution in this case. 

Otherwise, $2s_3-s_4=s_2+m_2=-1,$ which forces $m_2=0$ and $s_2=-1$. But then we have $$I(s_2,m_2) = I(-1,0)=0,$$ so that $I_\lam(\bm)=0$ matches the standard contribution again. This exhausts the cases.
\end{proof}
We thus need only consider whether $\ga_3$ is circled or not, while imposing the assumption that $m_4>0$. 

\noindent
\textbf{Case $1$:} \underline{$m_3>0$} By a simple change of variables, we have that 
\[
J_\lam(\bm)=I(s_5,m_5)I(s_6,m_6)I_\lam(s_3,s_4;m_3,m_4),
\] 
where 
\begin{equation*}
I_\lam(s_3,s_4;m_3,m_4):=q^{m_3}q^{m_4}\int_{\calo^\times}\int_{\calo^\times}\psi\left(2\vp^{s_3}t_3+\vp^{s_4}t_4+\vp^{2s_3-s_4}\frac{t_3^2}{t_4}\right)dt_3d_4.
\end{equation*}
 If $2s_3\geq s_4$, then this integral reduces to 
\begin{equation*}\label{case 0}
I_\lam(s_3,s_4;m_3,m_4)=I(s_3,m_3)I(s_4,m_4).
\end{equation*}
 In particular, $I_\lam(\bm)=0$ for $(\i,\bm)\notin \B(\lam+\rho)$. If we have $2s_3<s_4$ and $0\leq s_4$, then

after a simple change of variables
\begin{equation*}\label{case 1}
I_\lam(s_3,s_4;m_3,m_4)= I(s_3,m_3)I(2s_3-s_4,m_4).
\end{equation*}

 If we have $2s_3<s_4<0$, then all three terms in the additive character contribute.  Let us make the change of variables $x=\vp^{s_3}t_3$ and $y=\vp^{s_4}t_4$ so that 
\begin{equation*}\label{eqn: archtype}
I_\lam(s_3,s_4;m_3,m_4) = q^{s_3+m_3}q^{s_4+m_4}\int_{\vp^{s_3}\calo^\times}\int_{\vp^{s_4}\calo^\times}\psi\left(2x+\frac{x^2}{y}+y\right)dydx.
\end{equation*}
We examine two cases:

\noindent
$\textbf{Case $1.1$:  } \underline{s_3\neq s_4}$. Note that none of the terms in this case correspond to $\i$-Lusztig data in $\B(\lam+\rho)$, since we cannot have $s_3=s_4=-1$. 

We begin by making the change of variables $z=y/x$, so that
\begin{equation*}
I(x,y)= q^{-s_3}\int_{\vp^{s_4-s_3}\calo^\times}\int_{\vp^{s_3}\calo^\times}\psi\left(x\frac{(1+z)^2}{z}\right)dxdz.
\end{equation*}
Making a change of variables $x \mapsto x\frac{(1+y)^2}{y}$ in the inner integral, we have
\begin{eqnarray*}
I_\lam(s_3,s_4;m_3,m_4)&=\begin{cases} \;\;I(-s_3,m_3)I(s_4,m_4) \qquad\qquad: s_4<s_3\\
				 I(-s_3,m_3)I(2s_3-s_4,m_4)\qquad: s_4>s_3.\end{cases}
\end{eqnarray*}
In both cases with non-vanishing integral, we obtain the contradiction of $-1<s_3<0$: {in the first case, we need $s_4=-1<s_3<0$ and in the second case, $2s_3-s_4=-1<s_3<0$.} In particular, $I_\lam(s_3,m_3,s_4,m_4)=0$ in all of these cases.\\

\noindent
 $\textbf{Case $1.2$:  } \underline{s_3=s_4}.$ Note that in this case $(\i,\lam)\in \B(\lam+\rho)$ if and only if $k:=s_3=s_4=-1$. We also note that this is precisely the resonant case $m_3=m_5>0$.  
By changing variables as in the previous case,
\begin{equation*}\label{case 2}
I_\lam(s_3,s_4;m_3,m_4)= q^{s_3+m_3}q^{s_4+m_4}\left[q^{-s_3}\int_{\calo^\times}\int_{\vp^{s_3}\calo^\times}\psi\left(x\frac{(1+y)^2}{y}\right)dxdy\right].
\end{equation*}
We now decompose this over $\calo^\times =\bigsqcup_{\zeta\in \calo^{\times}/\la\vp\ra}\tilde{\zeta}+\vp\calo$ to obtain
\begin{align*}
q^{-k-1}\sum_{\zeta\neq 0,-1} \int_\calo\int_{\vp^k\calo^\times}\psi\left(x\frac{(1+\tilde{\zeta}+\vp y)^2}{\tilde{\zeta}+\vp y}\right)dxdy\\ \qquad+q^{-k-1}\int_\calo\int_{\vp^k\calo^\times}\psi\left(x\frac{(\vp y)^2}{-1+\vp y}\right)dxdy.
\end{align*}
For each $\zeta\neq-1$, this may be evaluated in a similar fashion to the previous case as $\frac{(1+\tilde{\zeta}+\vp y)^2}{\tilde{\zeta}+\vp y}$ has constant valuation $0$. Thus, the first line is equal to
\[
q^{-k}(1-2q^{-1})(q^{-k})\int_{\calo^\times}\psi(\vp^kx)dx = \begin{cases}\quad-q(1-2q^{-1})\qquad: k=-1\\\qquad \qquad0\qquad\quad\quad\;: k<-1.\end{cases}
\]
In the case $\zeta=-1$,
\begin{align*}
q^{-k-1}\int_\calo\int_{\vp^k\calo^\times}\psi\left(x\frac{(\vp y)^2}{-1+\vp y}\right)dxdy 
&=q^{-k+1}(1-q^{-1})\sum_{j=0}^\infty q^{j}\int_{\vp^{k+2j+2}\calo^\times}\psi(x)dx.
\end{align*}
Thus, we have in the case that $s_3=s_4=k<0$
\begin{equation*}
I_\lam(k,k;m_3,m_4) = \begin{cases} \qquad I(s_3,m_3)I(s_4,m_4)\qquad\;\;\;: k=-1\\\qquad q^{m_3}q^{m_4}q^{-n}(1-q^{-1})\qquad: k=-2n\leq-2\\\qquad \qquad0\qquad\quad\qquad\qquad\quad\;: \text{otherwise}.\end{cases} 
\end{equation*}
 We summarize the results of this case here:
\begin{equation*}\label{case1}
I_\lam(\bm) = \prod_{\al\neq \ga_3,\ga_4}I(s_\al,m_\al)\prod_{\al}(q^{-1}x_{\al})^{m_\al}\begin{cases} I(s_3,m_3)I(s_4,m_4)\qquad\qquad: 	2s_3\geq s_4\\
																								I(s_3,m_3)I(2s_3-s_4,m_4)\quad\;\;: 2s_3<s_4, 0\leq s_4\\
																								\quad q^{m_3}q^{m_4}q^{-n}(1-q^{-1})\qquad\;: s_3=s_4=-2n\leq-2\\
																								\qquad\qquad 0\qquad\qquad\qquad\qquad: \text{otherwise}\end{cases}
\end{equation*}
\begin{Rem}
This second case is new, as it can truly differ from the standard contribution inside $\B(\lam+\rho)$.
Moreover, there are cases where $I_\lam(\bm)=0$ despite a non-trivial standard contribution.  
In particular, the standard contribution requires augmenting to encode the subtle geometry of general highest-weight crystal graphs.
\end{Rem}

\noindent
 \textbf{Case $2$:  } \underline{$m_3=0$}. In this case, we factor off $I(s_5,m_5)$ and consider the inner integration
  \begin{equation*}
  I=\int_{\vp^{-m_4}\calo^\times}\int_\calo\psi_\lam\left(2\frac{t_3 t_4}{w_6}+\frac{t_3^2t_4}{w_5w_6}+\frac{t_4w_5}{w_6}\right)dt_3dt_4.
 \end{equation*}
Setting $$x= \vp^{\lam_2}\frac{t_4w_5}{w_6} \text{     and    }y = \frac{t_3}{w_5},$$ this integral reduces to 
\begin{equation*}
\displaystyle q^{m_5}q^{s_4+m_4}\int_{\vp^{s_4}\calo^\times}\int_{\la\vp^{m_5}\ra}\psi_\lam\left(x(y+1)^2\right)dydx.
\end{equation*}
\textbf{Case $2.1$:} If $m_5>m_3=0$, 
 a simple change of variables this integral gives
$
I(s_4,m_4),
$
 so that if $m_3=0$ and $m_4,m_5>0$, we find
 \begin{equation*}
 J_\lam(\bm) = I(s_3,m_3)I(s_4,m_4)I(s_5,m_5)I(s_6,m_6).
 \end{equation*}
We remark that since this requires $s_4\geq-1$ to not vanish, we must have that $s_3>0$, allowing us to insert $I(s_3,m_3) = 1$ so that this formula agrees with (\ref{standard contribution}).  Note again that only  MV cycles in $\B(\lam+\rho)$ contribute in this case.

\noindent
\textbf{Case $2.2$:} In the resonant case $m_5=m_3=0$, we have $s_3=s_4=2s_3-s_4$. The integral must now be decomposed so that it may be written as
\begin{align*}
q^{s_4+m_4}\left[\sum_{\zeta\neq0}q^{-1}\int_{\vp^{s_4}\calo^\times}\int_{\calo}\psi\left(x(\vp t+\tilde{\zeta})^2\right)dtdx+ q^{-1}\int_{\vp^{s_4}\calo^\times}\int_{\calo}\psi\left(x(\vp t)^2\right)dtdx\right].
\end{align*}

Considering the first term, since $\val(x(\vp t+\tilde{\zeta})^2)=s_4$, this simplifies after a change of variables to
\[
(1-q^{-1})I(s_4,m_4).
\]
For $2s_3-s_4<-1$, this term vanishes. As for the second term, we may rewrite this as
\begin{align*}
q^{s_4+m_4}\left[q^{-s_4-1}(1-q^{-1})\sum_{j=0}^\infty q^{-j}\int_{\calo^\times}\psi\left(\vp^{s_4+2j+2}x\right)dx\right].
\end{align*}
This now evaluates similarly to Case 1.2 above, and we obtain
\begin{equation*}
 I=\begin{cases}  \qquad I(s_4,m_4)\qquad\quad\;\;\: : s_4\geq 0\\
													\qquad \quad\:0\qquad\qquad\qquad: s_4\leq 0 \text{ odd}\\
														q^{-n}\left[q^{m_4}(1-q^{-1})\right]\qquad: s_3=s_4=-2n< 0, \end{cases}
\end{equation*}
Therefore, we find that 
\begin{equation*}
J_\lam(\bm) = \begin{cases} I(s_3,m_3)I(s_4,m_4)I(s_5,m_5)I(s_6,m_6)\quad: m_3=0,\; s_4\geq -1\\  I(s_5,m_5)I(s_6,m_6)q^{-n}\left[q^{m_4}(1-q^{-1})\right]\ \quad :m_3=m_5=0, s_4=-2n<0.\end{cases}
\end{equation*}
\subsection{Putting it all together}
Fix a dominant weight $\lam$, and let $\bm\in \B(-\infty)$ be a Lusztig datum with respect to the long word $\i=(2,1,2,1,2,1)$. The computations of the preceding section prove the following
\begin{Thm}\label{Thm: all together}
Let $I_\lam(\bm)$ be the MV integral associated to $(\i,\bm)$. Set $k(\bm)=(k_1,k_2)$. Then if $\bm\in \B(\lam+\rho)$ 
\begin{align*}\label{eqn: evaluation}
I_\lam(\bm)
		&= \prod_{\al\neq\ga_4}G(s_\al,m_\al)G(2\min\{s_3,s_4\}-s_4,m_4)x_1^{k_1}x_2^{k_2}=:G_1(\bm)x_1^{k_1}x_2^{k_2}.
\end{align*}
If $\bm\notin \B(\lam+\rho)$ then $I_\lam(\bm) =0$ unless $\bm$ is resonant ($m_3=m_5$), $m_4>0$, and $s_3=s_4=-2k<0$. In this case, we find
\begin{equation*}\label{eqn: outside evaluation}
I_\lam(\bm)=q^{-k}(1-q^{-1})\prod_{\al\neq \ga_3,\ga_4}G(s_\al,m_\al)x_1^{k_1}x_2^{k_2}=:G_{res}(\bm)x_1^{k_1}x_2^{k_2}.
\end{equation*}
\end{Thm}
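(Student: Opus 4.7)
The plan is to assemble the detailed case analysis already carried out in the preceding subsection into the two unified formulas for $G_1(\bm)$ and $G_{res}(\bm)$. Starting from the factorization
\[
I_\lam(\bm) = I(s_1,m_1)\,I(s_2,m_2)\,J_\lam(\bm)\prod_{\al}(q^{-1}x_\al)^{m_\al},
\]
I first separate off the $\ga_1,\ga_2$-factors (which automatically contribute the relevant $G(s_\al,m_\al)$'s via the definition of $I(\cdot,\cdot)$) and the monomial $x_1^{k_1}x_2^{k_2}$, reducing the theorem to evaluating $J_\lam(\bm)$ and showing it agrees with $\prod_{\al\in\{\ga_3,\ga_4,\ga_5,\ga_6\}}G(s_\al,m_\al)$ replaced by $G(2\min\{s_3,s_4\}-s_4,m_4)$ in the $\ga_4$-slot, up to the resonant correction.

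Next, I invoke Lemma \ref{Lem: simplifying} to dispose of the case $m_4=0$: here the conclusion is the standard contribution, which agrees with $G_1(\bm)$ because in this regime $s_3\geq s_4$ forces $2\min\{s_3,s_4\}-s_4=s_4$, and the outside-crystal assertion is vacuous since no resonant non-crystal contribution occurs with $m_4=0$. Assuming now $m_4>0$, I proceed by the dichotomy on $m_3$:

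\textbf{Case $m_3>0$.} Here $J_\lam(\bm)$ factors as $I(s_5,m_5)I(s_6,m_6)I_\lam(s_3,s_4;m_3,m_4)$, and the four sub-cases computed in Case $1.0$--$1.2$ of the preceding subsection determine $I_\lam(s_3,s_4;m_3,m_4)$ explicitly: (i) $2s_3\geq s_4$ gives $I(s_3,m_3)I(s_4,m_4)$; (ii) $2s_3<s_4$ with $s_4\geq 0$ gives $I(s_3,m_3)I(2s_3-s_4,m_4)$; (iii) $2s_3<s_4<0$ with $s_3\neq s_4$ vanishes; and (iv) $s_3=s_4=k<0$ gives $I(-1,m_3)I(-1,m_4)$ when $k=-1$, the resonant value $q^{m_3+m_4-n}(1-q^{-1})$ when $k=-2n\leq -2$, and $0$ otherwise. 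I then check that in case (i) one has $s_4=2\min\{s_3,s_4\}-s_4$, and in case (ii) one has $2s_3-s_4=2\min\{s_3,s_4\}-s_4$, so the two non-vanishing crystal sub-cases collapse into $G(s_3,m_3)G(2\min\{s_3,s_4\}-s_4,m_4)$. Case (iii) contributes nothing, while case (iv) with $k=-1$ again collapses into the crystal formula (since $s_3=s_4=-1$ is the unique crystal boundary here); the remaining resonant $k=-2n\leq -2$ alternative is precisely the non-crystal contribution producing $G_{res}(\bm)$ with $m_3>0$.

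\textbf{Case $m_3=0$.} Now $s_3=s_4+m_4+m_5\geq s_4$, so $2\min\{s_3,s_4\}-s_4=s_4$, and the desired crystal formula reduces to $G(s_3,0)G(s_4,m_4)\prod_{\al\in\{\ga_5,\ga_6\}}G(s_\al,m_\al)$. I then apply the Case $2.1$ computation when $m_5>0$ (producing exactly $I(s_4,m_4)I(s_5,m_5)I(s_6,m_6)$, which recombined gives the standard contribution since $G(s_3,0)=1$ in the permitted range $s_4\geq -1$) and Case $2.2$ when $m_5=0$ (the totally resonant sub-case), which either reproduces the standard contribution when $s_4\geq -1$ or gives $q^{-n}q^{m_4}(1-q^{-1})$ when $s_3=s_4=-2n<0$, matching $G_{res}(\bm)$ with $m_3=0$.

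Finally, I verify the crystal membership dichotomy using Theorem \ref{Thm: finite crystal}: outside $\B(\lam+\rho)$ at least one of $s_\al\geq -1$ or $2s_3-s_4\geq -1$ must fail. A quick inspection of the four sub-cases above shows that the only way to produce a non-zero contribution from a non-crystal $\bm$ is to land in the resonant branch $s_3=s_4=-2k<0$ with $m_3=m_5$ and $m_4>0$, yielding the stated $G_{res}$ formula. The main technical obstacle will be bookkeeping in Case $1$ to show that the $s_3=s_4=-1$ boundary case correctly merges into the crystal formula via the $\min$-shift (so that there is no double-counting at the boundary between $\B(\lam+\rho)$ and its complement), and checking that the parity constraint $s_4=-2n$ in the resonance correction is sharp — both follow directly from the explicit integral evaluations in the preceding subsection.
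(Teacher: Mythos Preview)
Your proposal is correct and follows exactly the paper's approach: the theorem is simply the synthesis of the case-by-case computations in the preceding subsection, and you have assembled them faithfully. Two small bookkeeping points to tighten: in Case~2 you write $s_3=s_4+m_4+m_5$, but in fact $s_3-s_4=m_5-m_3$, so with $m_3=0$ one has $s_3=s_4+m_5$ (the conclusion $s_3\geq s_4$ is unaffected); and in Case~1(i) the identity $s_4=2\min\{s_3,s_4\}-s_4$ holds only when $s_3\geq s_4$, whereas for $s_3<s_4$ with $2s_3\geq s_4$ one has $2\min\{s_3,s_4\}-s_4=2s_3-s_4\geq 0$ and $s_4>0$, so that $G(s_4,m_4)=G(2s_3-s_4,m_4)=1-q^{-1}$ agree for the right reason.
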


 \section{Resonance families}\label{Section: resonance families}
Theorem \ref{Thm: all together} shows that there are infinitely many Lusztig data $\bm$ such that $I_\lam(\bm)\neq0$. In this section, we reduce to a sum over a finite set of Lusztig data by taking advantage of an additional crystal graph structure on resonant Lusztig data outside $\B(\lam+\rho)$. We find that despite being able to reduce to a sum over finitely many Lusztig data, we have
\[
\sum_{\bm\notin \B(\lam+\rho)}I_\lam(\bm)\neq 0,
\]
so that a straightforward Tokuyama-type formula does not hold in this case. Furthermore, while it is possible to incorporate those non-trivial terms from outside the highest-weight crystal into a sum over $\B(\lam+\rho)$, there does not appear to be a canonical way to do so. For this reason, the formula we give in Theorem \ref{Thm: tokuyamatype} is not given simply as a sum over $\B(\lam+\rho)$, but has a correction factor arising from certain families of such Lusztig data.
\subsection{Arrays}\label{section: arrays}
To define these families, we introduce some additional notation. Define
\[
\B(\lam+\rho)_{res}=\left\{\bm : m_3=m_5;\; s_i\geq-1 \text{ for }i\neq3,4;\; s_3=s_4\leq0\text{ even}\right\}.
\]
In particular, if $\bm\notin \B(\lam+\rho)$ is a Lusztig datum such that $I_\lam(\bm)\neq0$, then $\bm\in \B(\lam+\rho)_{res}$. 
We introduce the following combinatorial notation: 
\begin{Def}
We call $A$ an \textbf{array of weight}{ $\kappa$} if 
\[
A=\left[\begin{array}{ccccc} a&b&c&b&d\\ &x&y&z&\\&&k&&\end{array}\right],
\] with all entries are non-negative integers and where 
\[
\kappa=a+6b+2c+d.
\]
We refer to the set of all arrays as $Arr$, and the set of weight $\kappa$ by $Arr_{\kappa}$. The value $k\geq0$ in the bottom row is the \textbf{decoration} of the array.
\end{Def}

For any $\bm\in \B(\lam+\rho)_{res}$ we have the \emph{array associated to} $\bm$:
\[
A(\bm) = \left[\begin{array}{ccccc} m_2&m_5&m_4&m_5&m_6\\ &s_2+1&s_6+1&s_5+1&\\&&k&&\end{array}\right],
\]
where $s_3=s_4=-2k$. Note that the weight of $A(\bm)$ is $k_1$, where $k(\bm) = (k_1,k_2)$. This gives us a map of sets $A:\B(\lam+\rho)_{res}\lra Arr$, which is neither injective nor surjective; we denote the image of this map to be $Arr(\lam)$. The failure of $A$ to be injective is due to the fact that the array $A(\bm)$ does not know $m_1$.

 We define the following operations on the set of arrays of a fixed weight $\kappa$.
\begin{Def}\label{Def: raiselower}
We define the \textbf{raising operators} $e_1,e_2:Arr_{\kappa}\lra Arr_{\kappa}\cup\{0\}$ on the set of arrays given by
\begin{equation*}
e_1\left[\begin{array}{ccccc} a&b&c&b&d\\ &x&y&z&\\&&k&&\end{array}\right]=\left[\begin{array}{ccccc} a-1&b&c+1&b&d-1\\ &x&y+1&z&\\&&k+1&&\end{array}\right]
\end{equation*}
 unless $\min\{a,d\}=0$ in which case $e_1(A) = 0$. Similarly, we define 
\begin{equation*}
e_2\left[\begin{array}{ccccc} a&b&c&b&d\\ &x&y&z&\\&&k&&\end{array}\right]=\left[\begin{array}{ccccc} a&b-1&c+3&b-1&d\\ &x+1&y&z+1&\\&&k+1&&\end{array}\right]
\end{equation*}
where similarly if $b=0$, $e_2(A)=0$. 
Dually, we define the \textbf{lowering operators} $f_1,f_2:Arr_{\kappa}\lra Arr_{\kappa}\cup\{0\}$ as follows:
\begin{equation*}
f_1\left[\begin{array}{ccccc} a&b&c&b&d\\ &x&y&z&\\&&k&&\end{array}\right]=\left[\begin{array}{ccccc} a+1&b&c-1&b&d+1\\ &x&y-1&z&\\&&k-1&&\end{array}\right]
\end{equation*}
 unless $\min\{c,y,k\}=0$ in which case $f_1(A) = 0$. Similarly, we define 
\begin{equation*}
f_2\left[\begin{array}{ccccc} a&b&c&b&d\\ &x&y&z&\\&&k&&\end{array}\right]=\left[\begin{array}{ccccc} a&b+1&c-3&b+1&d\\ &x-1&y&z-1&\\&&k-1&&\end{array}\right]
\end{equation*}
unless $\min\{x,z,k\}=0$ or $c<3$, in which case $f_2(A)=0$.\qed
\end{Def}
It is trivial to check that if $A\in Arr_{\kappa}$ and $e_i(A)\neq 0$, then $e_i(A)\in Arr_{\kappa}$ and similarly for the $f_i$. We need to know that these operations correspond to operations on the set $\B(\lam+\rho)_{res}$.

\begin{Prop}\label{Prop: compat} 
Suppose that $\bm\in\B(\lam+\rho)_{res}$. Let $A(\bm)$ be the array associated to $\bm$. Then there exists $\bn=:e_i(\bm)\in \B(\lam+\rho)_{res}$ such that 
\[
A\left(e_i(\bm)\right) = e_i\left(A(\bm)\right).
\] 
Similarly for the lowering operators $f_1,f_2$. In particular, $Arr(\lam)$ is closed under the raising and lowering operators.
\end{Prop}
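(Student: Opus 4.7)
My approach is to observe that the array $A(\bm)$ records every coordinate of $\bm$ except $m_1$. So for each of the four operators, I will define the candidate $\bn$ by reading $(n_2,n_3,n_4,n_5,n_6)$ off the formula for the operator applied to $A(\bm)$, and simply setting $n_1:=m_1$. For $e_1$, this means $(n_2,n_4,n_6) = (m_2-1,m_4+1,m_6-1)$ and $n_3=n_5=m_5$; for $e_2$, $(n_3,n_4,n_5) = (m_5-1,m_4+3,m_5-1)$ with $n_2=m_2,n_6=m_6$; and dually for $f_1,f_2$. Resonance $n_3=n_5$ then holds automatically because in each case the two ``$b$''-positions of the array shift by the same amount.

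Next I will substitute these formulas into the explicit expressions
\[
s_2 = \lam_2+m_5+m_6-m_2-2m_3, \quad s_5=\lam_2-m_5,\quad s_6=\lam_1-m_6,
\]
\[
s_3=\lam_2+m_6-m_3-m_4,\quad s_4=\lam_2+m_6-m_4-m_5,
\]
and verify directly that $s_j(\bn)$ shifts exactly as the middle row of the array predicts, and that $s_3(\bn)=s_4(\bn)$ shifts by $\pm 2$, matching the prescribed new decoration $k\pm 1$. In particular, $s_3(\bn)$ remains non-positive and even. For the untouched $s_1$: because $n_1=m_1$, a short calculation gives $s_1(\bn)=s_1(\bm)$, so $s_1(\bn)\geq -1$ is inherited.

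I will then verify membership in $\B(\lam+\rho)_{res}$. Non-negativity of the new $n_k$ is exactly the content of the non-vanishing conditions for $e_i,f_i$ on $A(\bm)$: for instance, $e_1(A)\neq 0$ requires $a,d>0$, giving $n_2,n_6\geq 0$; $f_2(A)\neq 0$ requires $x,z,k>0$ and $c\geq 3$, which together ensure $n_4=m_4-3\geq 0$ and $s_2(\bn),s_5(\bn)\geq -1$ and the new decoration $k-1\geq 0$. In each of the four cases I will run through the short list of inequalities and confirm them. Finally $A(\bn) = e_i(A(\bm))$ (resp.\ $f_i$) follows immediately by plugging the new $s_j(\bn)$ values back into the array.

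There is no conceptual obstacle here; the only ``hard'' part is the bookkeeping, which is systematically organized by the observation that each operator is designed precisely so that the total weight $\mu(\bn)=\mu(\bm)$ is preserved (equivalently, $k_1$ and $k_2$ are unchanged), while the contributions of $(m_3,m_5)$ on one hand and $m_4$ on the other are rebalanced against each other and against the boundary-defect $k$. Since the array captures exactly the data on which $e_i,f_i$ act nontrivially and $n_1$ is free, taking $n_1=m_1$ furnishes the required lift, and closure of $\mathrm{Arr}(\lam)$ under the operators follows from the verification above applied in both directions.
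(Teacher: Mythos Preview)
Your approach is correct and is essentially identical to the paper's own proof, which simply declares the check ``straightforward,'' writes down the single example $e_1(\bm)=(m_1,m_2-1,m_3,m_4+1,m_5,m_6-1)$, and asserts that the remaining verifications are clear. You have supplied the bookkeeping that the paper omits; there is no difference in method.
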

\begin{proof}
This is a straightforward check. For example, if we consider $i=1$, and set $$e_1(\bm) = (m_1,m_2-1,m_3,m_4+1,m_5,m_6-1).$$ Since we assume $e_1(A(\bm))\neq0$, we know that this is a Lusztig datum. Moreover, it is clear that it lies in $\B(\lam+\rho)_{res}$ and that $A(e_1(\bm))=e_1(A(\bm))$. 
\end{proof}

For any array $A$, $e_i^NA = 0$ for $N$ large enough. Let $\ep_i(A) =\max\{n\geq0: e_i^n(A)\neq 0\}$. Since $e_1\circ e_2=e_2\circ e_1$, we lose no generality in defining the \emph{head} $\hd(A)$ associated to $A$ to be 
\[
\hd(A)=e_1^{\ep_1(A)}e_2^{\ep_2(A)}(A).
\] 
If $A=A(\bm)$ for some $\bm\in \B(\lam+\rho)_{res}$, the we define $\hd(\bm)$ such that $A(\hd(\bm))=\hd(A(\bm)).$

\subsection{Resonance families}
We define an equivalence relation on $Arr_\kappa$: 
\[
A\sim A'\text{    if and only    } \hd(A) =\hd(A').
\]
By Proposition \ref{Prop: compat}, we see that this restricts to an equivalence relation on $Arr(\lam)$, which we pull back to $\B(\lam+\rho)_{res}$ via the map $A$ as follows: 
\[
\bm\sim \bn \text{   if and only if    }k(\bm) = k(\bm)\text{ and } A(\bm)\sim A(\bn)\in Arr(\lam).
\]  
By taking the weight $k(\bm)$ into account, we repair the failure of $A$ to be injective. For $\bm\in\B(\lam+\rho)_{res}$, we denote its equivalence class $RF(\bm)$, called the \textbf{resonance family} of $\bm$. We record the following obvious lemma.
\begin{Lem}
Every resonance family has a unique element $\bm$ of the form
\[
(m_1,0,0,m_4,0,0),\qquad(m_1,m_2,0,m_4,0,0),\qquad\text{or}\qquad(m_1,0,0,m_4,0,m_6).
\]
We refer to this element as the \textbf{head} of the resonance family $RF(\bm)$. We call a resonance family with head of the form $(m_1,0,0,m_4,0,0)$ a \emph{totally resonant family}.
\end{Lem}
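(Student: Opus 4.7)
The plan is to establish existence by iteratively applying the raising operators $e_1, e_2$ from Section~\ref{section: arrays}, and uniqueness by showing the array head determines the coordinates $m_2,\ldots,m_6$ while the weight constraint built into the definition of $RF(\bm)$ pins down $m_1$.

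For existence, begin with any $\bn\in RF(\bm)$ and iteratively apply $e_1, e_2$ using Proposition~\ref{Prop: compat}. The canonical lift of $e_i$ to Lusztig data (in the spirit of the $i=1$ case written out in the proof of that proposition) leaves $m_1$ unchanged; a direct computation using the explicit coroots $\gamma_k^\vee$ of $G_2$ from Section~\ref{Section: G2 integrals} shows the lift also preserves the weight $k(\bn)$, so the resulting element stays in $RF(\bm)$. The process terminates, because each application of $e_1$ strictly decreases $m_2+m_6$ and each application of $e_2$ strictly decreases $m_3+m_5$, while all $m_i$ are non-negative integers. At termination we obtain $\bn'\in RF(\bm)$ with $e_1(A(\bn'))=e_2(A(\bn'))=0$; by the defining vanishing conditions this forces $m_5=0$ (hence $m_3=0$ by the resonance condition $m_3=m_5$) together with $\min\{m_2,m_6\}=0$. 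Splitting into the three subcases according to which of $m_2, m_6$ vanish gives exactly the three advertised forms.

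For uniqueness, suppose $\bn, \bn'\in RF(\bm)$ both have one of the three forms. In each form $m_5=0$ and $\min\{m_2,m_6\}=0$, so $e_1$ and $e_2$ both vanish on $A(\bn)$ and on $A(\bn')$; thus $A(\bn)=\hd(A(\bn))$ and $A(\bn')=\hd(A(\bn'))$. The equivalence $\bn\sim\bn'$ then yields $\hd(A(\bn))=\hd(A(\bn'))$, so $A(\bn)=A(\bn')$, which pins down $m_2,\ldots,m_6$. The remaining coordinate $m_1$ is determined by the condition $k(\bn)=k(\bn')$ together with the fact that $m_1$ appears with a nonzero coefficient in the linear expression for $k(\bm)$ (coming from $\gamma_1^\vee=\al_2^\vee$), so $\bn=\bn'$. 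The only mildly subtle point in the argument is verifying that the canonical lift of the array operators to Lusztig data via Proposition~\ref{Prop: compat} respects the weight $k$ and thereby the family $RF(\bm)$; this is a short linear-algebra check with the coroot formulas for $G_2$, and the rest of the proof is purely combinatorial.
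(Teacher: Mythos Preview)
Your proof is correct. The paper records this lemma as ``obvious'' and gives no proof, so your argument supplies exactly the details one would expect: iterate the raising operators until both vanish (forcing $m_5=0$, hence $m_3=0$, and $\min\{m_2,m_6\}=0$), and then recover uniqueness from the fact that the array determines $m_2,\ldots,m_6$ while $k_2=m_1+m_2+2m_3+m_4+m_5$ determines $m_1$. The one point worth making fully explicit is the verification that the canonical lifts $e_1(\bm)=(m_1,m_2-1,m_3,m_4+1,m_5,m_6-1)$ and $e_2(\bm)=(m_1,m_2,m_3-1,m_4+3,m_5-1,m_6)$ preserve both coordinates of $k(\bm)$: the paper already notes that the array weight equals $k_1$, but preservation of $k_2$ is not stated and requires the coroot identities $\gamma_2^\vee=\alpha_1^\vee+\alpha_2^\vee$, $\gamma_3^\vee=3\alpha_1^\vee+2\alpha_2^\vee$, $\gamma_4^\vee=2\alpha_1^\vee+\alpha_2^\vee$, $\gamma_5^\vee=3\alpha_1^\vee+\alpha_2^\vee$, which you correctly flag as the only substantive check.
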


We now consider what these resonance families look like. Note that an element $\bm\in \B(\lam+\rho)_{res}$ is an element of $\B(\lam+\rho)$ if and only if its decoration $k$ vanishes. The following lemma gives a simple test of when a family meets $\B(\lam+\rho)$.
\begin{Lem}\label{Lem: m_4=0}
Let $\bm=(m_i)\in \B(\lam+\rho)_{res}$ such that $m_4=0$. Then $\bm\in \B(\lam+\rho)$.
\end{Lem}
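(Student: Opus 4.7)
The plan is to unpack the three families of inequalities defining $\B(\lam+\rho)$ (namely $s_i\geq -1$ for all $i$ and $2s_3-s_4\geq -1$) and to verify each one using the hypotheses that define $\B(\lam+\rho)_{res}$ together with $m_4=0$. Since $\bm\in \B(\lam+\rho)_{res}$ already supplies $s_i\geq -1$ for $i\neq 3,4$, the only content lies in checking the $s_3$, $s_4$, and $2s_3-s_4$ inequalities.

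First I would substitute $m_4=0$ and $m_3=m_5$ into the explicit formulas for $s_3, s_4$ recorded just before the theorem. This immediately yields
\[
s_3 = s_4 = \lam_2+m_6-m_3, \qquad 2s_3-s_4 = \lam_2+m_6-m_3 = s_3,
\]
so the three remaining bounds collapse into the single inequality $s_3\geq -1$.

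Next I would invoke the algebraic identity $2s_3-s_4 = s_2+m_2$, which holds for every Lusztig datum $\bm$ (it is the same identity used in the proof of Lemma \ref{Lem: simplifying}, and is a direct consequence of the explicit formulas for $s_2,s_3,s_4$). Combining this with $s_3=2s_3-s_4$ from the previous step gives $s_3=s_2+m_2$, and since $\bm\in\B(\lam+\rho)_{res}$ forces $s_2\geq -1$ and $m_2\geq 0$, we conclude $s_3\geq -1$.

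Finally, the hypothesis that $s_3\leq 0$ is even forces $s_3\in\{0,-2,-4,\ldots\}$, so the bound $s_3\geq -1$ can only be met by $s_3=0$. Together with the preceding reductions this establishes $s_i\geq -1$ for all $i$ and $2s_3-s_4\geq 0\geq -1$, hence $\bm\in\B(\lam+\rho)$. The entire argument is elementary bookkeeping once one notices the identity $2s_3-s_4=s_2+m_2$; this is the only nonroutine step and is the main thing to highlight, though it is essentially already extracted from earlier in the paper.
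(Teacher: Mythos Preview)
Your argument is correct and follows essentially the same pattern as the paper's proof: both reduce to showing the decoration vanishes (equivalently $s_3=0$) by combining $m_4=0$ with one of the inequalities $s_i\geq -1$ coming from the definition of $\B(\lam+\rho)_{res}$. The paper uses $s_5\geq -1$ (observing $m_5=\lam_2+m_6+2k\leq\lam_2+1$ forces $k=0$), while you use $s_2\geq -1$ via the identity $2s_3-s_4=s_2+m_2$; these are interchangeable elementary routes to the same conclusion.

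One small correction: the identity $2s_3-s_4=s_2+m_2$ does \emph{not} hold for every Lusztig datum---in general $2s_3-s_4=s_2+m_2-m_4$, and this is also how it is used in Lemma~\ref{Lem: simplifying}, where $m_4=0$ is a hypothesis. Since $m_4=0$ is your hypothesis too, the proof is unaffected, but the parenthetical claim should be dropped.
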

\begin{proof}
We need only show that the decoration $k$ vanishes. Since $\bm\in \B(\lam+\rho)_{res}$, we know that $m_4=\lam_2+m_6-m_5+2k=0$. Then we have
\[
m_5=\lam_2+m_6+2k\leq \lam_2+1.
\]
It follows that $k=0$, and the lemma is proved.
\end{proof}
\begin{Prop}\label{Prop: empty intersection}
Suppose that $RF(\bm)$ is the resonance family with head $\bm=(m_i)$. If $RF(\bm)\cap\B(\lam+\rho)=\emptyset$, then 
\[
RF(\bm)=\{f_1^{t_1}f_2^{t_2}\bm\::\: 0\leq t_1 \leq s_6+1,\;0\leq t_2\leq \min\{s_2,s_5\}+1\}.
\]
\end{Prop}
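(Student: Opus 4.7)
The plan is to realize $RF(\bm)$ as a lattice rectangle in the $(t_1,t_2)$-plane and to reduce the non-trivial containment to a corner check. First I would verify directly from the definitions that $f_1$ and $f_2$ commute on arrays (both composites agree when nonzero and vanish under the same explicit conditions); since $\bm$ is the head of $RF(\bm)$, every element of $RF(\bm)$ then has a unique expression $f_1^{t_1}f_2^{t_2}\bm$ with $t_1, t_2 \geq 0$.

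Next I would compute the array
\[
A(f_1^{t_1}f_2^{t_2}\bm) = \left[\begin{array}{ccccc} m_2+t_1 & t_2 & m_4-t_1-3t_2 & t_2 & m_6+t_1\\ & s_2+1-t_2 & s_6+1-t_1 & s_5+1-t_2 & \\ & & k-t_1-t_2 & & \end{array}\right],
\]
and, by performing say the $f_2$-applications first and the $f_1$-applications second, track precisely when each intermediate step is legal. This shows $f_1^{t_1}f_2^{t_2}\bm \neq 0$ if and only if \textbf{(i)} $t_1\leq s_6+1$, \textbf{(ii)} $t_2\leq \min\{s_2,s_5\}+1$, \textbf{(iii)} $t_1+3t_2\leq m_4$, and \textbf{(iv)} $t_1+t_2\leq k$. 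The inclusion ``$\subseteq$'' of the proposition is then immediate from (i)--(ii); the entire content of the proposition is the reverse inclusion, i.e.\ that (iii)--(iv) are implied by (i)--(ii) under the hypothesis.

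Both (iii) and (iv) are monotone decreasing in $(t_1,t_2)$, so it suffices to check them at the corner $(s_6+1,\min\{s_2,s_5\}+1)$. To bring in the hypothesis, I would use the observation that $f_1^{t_1}f_2^{t_2}\bm \in \B(\lam+\rho)$ precisely when its decoration $k - t_1 - t_2$ vanishes. Hence $RF(\bm) \cap \B(\lam+\rho) = \emptyset$ translates to the non-existence of a lattice point on the line $t_1+t_2=k$ satisfying (i), (ii), and (iii). Splitting into the three possible head forms (totally resonant; $m_2>0, m_6=0$; $m_6>0, m_2=0$), with Lemma~\ref{Lem: m_4=0} supplying $m_4>0$ and $k>0$, a short integer-interval analysis shows the hypothesis is equivalent to the uniform strict inequality $k > s_6 + \min\{s_2,s_5\} + 2$. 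This is exactly (iv) at the corner, with strict margin at least $1$.

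Finally I would substitute the explicit head values $m_4 = \lam_2 + 2k$ (head types (a) and (b)) or $m_4 = \lam_2 + m_6 + 2k$ (type (c)) into (iii) at the corner; in each case the resulting linear inequality in $\lam_1, \lam_2, k, m_2, m_6$ reduces, once the lower bound on $k$ from the previous paragraph is inserted and the dominance $\lam_1, \lam_2 \geq 0$ is used, to something manifestly true such as $\lam_1+m_2+2\geq 0$ or $\lam_1+2\geq 0$. Monotonicity then propagates (iii) and (iv) throughout the rectangle, completing the reverse inclusion. The main obstacle I anticipate is not any single calculation but the bookkeeping across the three head types; uniformizing the analysis by parametrizing the head by $(m_2,m_6)$ subject to $m_2 m_6 = 0$ looks like the cleanest way to avoid case repetition.
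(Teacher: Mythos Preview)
Your argument is correct and takes a genuinely different route from the paper's. The paper works at the boundary: it picks an $\bn\in RF(\bm)$ with positive decoration for which $f_i A(\bn)=0$ and argues directly that the vanishing must be caused by the relevant $s$-bound ($s'_6=-1$ for $i=1$, $\min\{s'_2,s'_5\}=-1$ for $i=2$), ruling out the alternative causes $n_4=0$ (via Lemma~\ref{Lem: m_4=0}) and, for $f_2$, the delicate cases $n_4\in\{1,2\}$ by ad hoc inspection. Your approach instead parametrizes the entire family by $(t_1,t_2)$, extracts the four constraints (i)--(iv) once and for all, and reduces to a single corner check by monotonicity.

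The trade-off is that the paper's argument stays local and never needs the explicit formula $m_4=\lam_2+m_6+2k$ or the numerical bound $k>s_6+\min\{s_2,s_5\}+2$; it pays for this with the small-$n_4$ case analysis. Your approach is more systematic and has the side benefit of proving Corollary~\ref{Cor: empty bound} as an intermediate step rather than as a consequence, but it requires you to independently establish the contrapositive (that $k\leq s_6+\min\{s_2,s_5\}+2$ forces a lattice point with decoration zero), which in turn needs the same kind of (iii)-verification at a specific point on the line $t_1+t_2=k$. So some of the work you postpone to the corner check is actually already hidden inside your ``short integer-interval analysis''; it would be worth making that step explicit, since it is where (iii) first enters.
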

The content of the claim is that if we assume no element of the resonance family has decoration $k=0$, then $m_4$ is large enough so that the only cause of $f_i(\bn) =0$ for $\bn\in RF(\bm)$ is $s_\al=-1$ for the appropriate $\al$. 
\begin{proof}
  It follows from the definition of the equivalence relation defining a resonance family that 
\[
RF(\bm)=\{f_1^{t_1}f_2^{t_2}\bm\::\: f_1^{t_1}f_2^{t_2}(A(\bm))\neq0\}.
\]
Consider $\bn=(n_i)\in RF(\bm)$ with decoration $k>0$ such that $f_1A(\bn)=0$. We need to see that $s'_6=\lam-n_6=-1$. By the definition of $f_1$, $f_1A(\bn)=0$ implies $\min\{s_6+1,k\}=0$ or $n_4=0$. We know $k>0$ by assumption, and that $n_4>0$ by Lemma \ref{Lem: m_4=0}. We find that $s'_6=-1$.

Now suppose that $\bn=(n_i)\in RF(\bm)$ such that $f_2A(\bn)=0$. By the definition, this means that $\min\{s'_5+1,s'_2+1,k\}=0$ or $n_4<3$. We must show that $\min\{s'_2+1,s'_5+1\}=0$. As above, we need only consider the case $0<n_4<3.$ This implies that 
\[
\lam_2+2\leq \lam_2+n_6+2k=n_5+n_4\leq n_5+2\Rightarrow \lam_2\leq n_5.
\]
If $n_4=1$, then this in equality is strict, resulting in $s'_5=-1$, which is what we wanted to show. If $n_4=2$, then we could have $n_5=\lam_2$. The preceding inequality now says
\[
n_6+2\leq n_6+2k=2\Rightarrow n_6=0.
\]
If $n_2>n_6=0$, then we must have $n_2=1$ so that $s_2'=-1$. If $n_2=n_6=0$, this tells us that $\bn'=f_1\bn\in RF(\bm)$ must have the array
\[
A(\bn')=\left[\begin{array}{ccccc} 1&\lam_2&1&\lam_2&1\\ &1&\lam_1&1&\\&&k'&&\end{array}\right],
\]
where $k'=k-1>0$. This implies that
\[
2k'+1=\lam_2+n_6'+2k'-n_5'=n_4'=1,
\]
a contradiction. This completes the proof.
\end{proof}

\begin{Cor}\label{Cor: empty bound}
Suppose that $RF(\bm)$ is the resonance family with head $\bm=(m_i)$. Let $k>0$ be the decoration of $\bm$. If $RF(\bm)\cap\B(\lam+\rho)=\emptyset$, then 
\[
k>\min\{s_2,s_5\}+s_6+2.
\]
\end{Cor}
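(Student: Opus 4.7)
The plan is to derive the bound as an immediate pigeonhole-style contradiction from Proposition \ref{Prop: empty intersection}. The key observation is that each application of $f_1$ or $f_2$ decreases the decoration $k$ of an array by exactly one (as is visible from the explicit definitions of the lowering operators on $Arr_\kappa$), so that $f_1^{t_1}f_2^{t_2}\bm$ has decoration $k-t_1-t_2$ whenever it is nonzero. In particular, any element of $RF(\bm)$ reachable from $\bm$ by exactly $k$ applications of the $f_i$ (in any order) will have decoration zero.

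Next I would note that an element $\bn\in \B(\lam+\rho)_{res}$ with decoration $0$ automatically lies in $\B(\lam+\rho)$: decoration $0$ means $s_3=s_4=0$, so both $s_\alpha\geq -1$ for all $\alpha$ and the extra condition $2s_3-s_4\geq -1$ hold, which is exactly the characterization of $\B(\lam+\rho)$ provided by Theorem \ref{Thm: finite crystal}. So, producing any nonzero element of $RF(\bm)$ with decoration zero contradicts the hypothesis $RF(\bm)\cap \B(\lam+\rho)=\emptyset$.

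With these preparations, I would argue by contradiction: suppose $k\leq \min\{s_2,s_5\}+s_6+2$. Choose $t_1=\min(k,s_6+1)$ and $t_2 = k-t_1$. Then $0\leq t_1\leq s_6+1$ and $0\leq t_2\leq \min\{s_2,s_5\}+1$, since if $k\leq s_6+1$ we get $t_2=0$, and otherwise $t_2 = k-s_6-1\leq \min\{s_2,s_5\}+1$ by the assumed bound on $k$. By Proposition \ref{Prop: empty intersection}, the element $\bn:=f_1^{t_1}f_2^{t_2}\bm$ is a nonzero member of $RF(\bm)$; by the first paragraph it has decoration $0$; by the second paragraph it lies in $\B(\lam+\rho)$. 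This contradicts the hypothesis and completes the argument, giving $k>\min\{s_2,s_5\}+s_6+2$.

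There is no real obstacle here: the corollary is essentially a repackaging of Proposition \ref{Prop: empty intersection} together with the remark that the lowering operators decrement the decoration by one. The only point that deserves explicit mention is the compatibility check that decoration zero genuinely forces membership in $\B(\lam+\rho)$, which is what makes the pigeonhole work.
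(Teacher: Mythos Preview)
Your argument is correct and is precisely the intended derivation: the paper states this as an immediate corollary of Proposition~\ref{Prop: empty intersection} without further proof, and the mechanism is exactly the one you describe---the lowering operators each decrement the decoration by one, so reaching $t_1+t_2=k$ inside the rectangle $\{0\leq t_1\leq s_6+1,\;0\leq t_2\leq \min\{s_2,s_5\}+1\}$ would produce an element of decoration zero, which the paper has already noted lies in $\B(\lam+\rho)$. Your explicit verification that decoration zero forces membership in $\B(\lam+\rho)$ is a helpful clarification, though the paper records this fact just before Lemma~\ref{Lem: m_4=0}.
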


The terminology of raising and lowering operators (as well as the notation $e_i$ and $f_i$) is motivated by the relationship between these operators on a resonance family and a certain Kashiwara crystal structure, which we now describe.
Suppose that $RF(\bm)$ is a resonant family with head $\bm$. We define a weight map 
$
 \mathrm{wt}:RF(\bm)\lra \zz^2
$
as follows: if $\bn=f_1^{t_1}f_2^{t_2}\bm$, we set $$\mathrm{wt}(\bn) = (s_6+1-2t_1,\min\{s_2,s_5\}+1-2t_2).$$  
Recall that we have defined
$
\ep_i(\bn) = \max\{k\geq 0: e_i^kA(\bn)\neq0\}$ and set $\varphi_i(\bn) =\ep_i(\bn)+\la h_i,\mathrm{wt}(\bn)\ra.
$

Using the terminology from \cite[Section 4.5]{HongKang}, a crystal graph $(\B,e_i,f_i, \mathbf{wt}, \ep_i,\varphi_i)$ is called \emph{upper seminormal} if 
\[
\ep_i(\bn) = \max\{k: e_i^kx\neq0\} \text{   for all  $x\in \B$ and $i\in I$},
\]
and is called \emph{seminormal} if it is upper seminormal and additionally 
$\varphi_i(x)= \max\{k: f^k_i(x)\neq0\}.
$
\begin{Thm}\label{Thm: crystal structure}
The data $(RF(\bm),e_i,f_i, \mathbf{wt}, \ep_i,\varphi_i)$ satisfy the axioms of a upper seminormal Kashiwara crystal graph of Cartan type $A_1\times A_1$.

 If $RF(\bm)\cap\B(\lam+\rho)=\emptyset$, this crystal is seminormal, and is a highest weight crystal of type $A_1\times A_1$ of highest weight $(s_6+1,\min\{s_2,s_5\}+1)$.
\end{Thm}

\begin{proof}
We refer the reader to \cite[Section 4.5]{HongKang} for the axioms of a Kashiwara crystal of Cartan type $A_1\times A_1$, which are immediately seen to be satisfied by the tuple $(RF(\bm),e_i,f_i, \mathbf{wt}, \ep_i,\varphi_i)$. For example, it follows directly from the formulas for $e_i$ and $f_i$ in Definition \ref{Def: raiselower} that
\[
e_i A(\bn) = A(\bn')\text{   if and only if    } f_i A(\bn') = A(\bn)
\]
whenever $A(\bn),A(\bn')\in RF(\bm)$. For a general resonance family, upper seminormality is immediate from the definition of $\ep_i$.

Now assume that $RF(\bm)\cap\B(\lam+\rho)=\emptyset$. Proposition \ref{Prop: empty intersection} tells us that the definition of weight map $\mathrm{wt}$ is sufficient and takes values ranging in the correct weight set for the highest weight $(s_6+1,\min\{s_2,s_5\}+1)$. It remains only to show that the crystal is seminormal: that is, we need 
$$
\varphi_i(\bn) = \max\{k\geq 0: f_i^kA(\bn)\neq0\},
$$
where $h_i$ is the simple coroot associated to the corresponding $A_1$-factor of the root system.  When $i=1$, our definition gives
\[
\varphi_i(\bn) = \ep_i(\bn)+s_6+1-2t_1 = s_6+1-t_1,
\]
where $\bn=f_1^{t_1}f_2^{t_2}(\bm)$. By Proposition \ref{Prop: empty intersection}, this last value equals $\max\{k\geq 0: f_i^kA(\bn)\neq0\},$ as was to be shown. The argument is identical for $i=2$. 
\end{proof}
In the case that $RF(\bm)\cap\B(\lam+\rho)\neq\emptyset$, the crystal $RF(\bm)$ is contained in an $A_1\times A_1$-crystal that is contained in $Arr(\lam)$. These crystals come equipped with an additional decoration, the parameter $k$. When $k<\min\{s_2+1+s_6+1,s_5+1+s_6+1\}$, after a certain number of lowering operations the parameter $k$ will drop to $0$ prior to reaching the end of the root string. Our definition of the lowering operators $f_i$ then forces the crystal to end at this step so that $RF(\bm)$ is \emph{truncated}. See Figure \ref{Figure 3}. We will see that these truncated crystals contribute to the final evaluation of $I_\lam$.

\section{A geometric Tokuyama-type formula for $G_2$}\label{Section: generalized tok} 
Let $\bm$ be a resonant Lusztig datum and let $RF(\bm)$ be the corresponding resonance family. In this section, we consider sums of the form
\[
\sum_{\bn\in RF(\bm)}I_\lam(\bn).
\]
The first main result is that for those resonance families that are complete Kashiwara crystal graphs, the above sum vanishes. We then compute the sum over those families which do contain Lusztig data in $\B(\lam+\rho)$. In particular, we see the terms outside $\B(\lam+\rho)$ do not vanish.
\begin{Thm}\label{Thm: external vanishing}
Let $RF(\bm)$ be the resonance family such that $RF(\bm)\cap\B(\lam+\rho)=\emptyset$. Then
\[
\sum_{\bn\in RF(\bm)}I_\lam(\bn)=0.
\]
\end{Thm}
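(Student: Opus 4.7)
The first step is to reduce the claim to a finite combinatorial sum. By Theorem~\ref{Thm: all together}, every $\bn \in RF(\bm)$ contributes $I_\lam(\bn) = G_{res}(\bn)\, x_1^{k_1(\bn)} x_2^{k_2(\bn)}$. A direct check shows that $f_1, f_2$ preserve the MV-weight $(k_1, k_2)$ and also fix $s_1$ and $m_1$, so $G(s_1, m_1)\, x_1^{k_1} x_2^{k_2}$ factors out of the sum; we may assume $G(s_1, m_1) \neq 0$, else the claim is trivial. Writing $\bm_\hd$ for the head of $RF(\bm)$, Proposition~\ref{Prop: empty intersection} gives the parameterization
\[
RF(\bm) \;=\; \{\, f_1^{t_1} f_2^{t_2}(\bm_\hd) : 0 \leq t_1 \leq L_1,\; 0 \leq t_2 \leq L_2 \,\},
\]
with $L_1 = s_6(\bm_\hd)+1$ and $L_2 = \min\{s_2(\bm_\hd), s_5(\bm_\hd)\}+1$. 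Since $q^{-k(\bn)} = q^{-k(\bm_\hd)}\, q^{t_1+t_2}$, the claim reduces to showing
\[
T := \sum_{t_1=0}^{L_1}\sum_{t_2=0}^{L_2} q^{t_1+t_2}\, G(s_2(t_2), m_2(t_1))\, G(s_5(t_2), m_5(t_2))\, G(s_6(t_1), m_6(t_1)) \;=\; 0,
\]
where each statistic depends linearly on $(t_1, t_2)$: $f_1$ raises $m_2, m_6$ and lowers $s_6$, while $f_2$ raises $m_3 = m_5$ and lowers $s_2, s_5$.

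The evaluation will rest on two elementary inputs. The first is the \emph{length-of-string identity}
\[
\sum_{t=0}^{L+1} q^t\, G(L-t, m_t) \;=\; 0 \qquad (L \geq 0,\; m_0 = 0,\; m_t > 0 \text{ for } t \geq 1),
\]
which is immediate from the case definition of $G$. The second is the relation $G(s, 0) - G(s, m > 0) = q^{-1}$, valid for all $s \geq -1$.

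I plan to split the computation into three cases according to the form of $\bm_\hd$. If $m_2(\bm_\hd) > 0$, then $m_2(t_1) > 0$ throughout, so $T$ factorizes cleanly as a product of $t_1$- and $t_2$-sums, and the $t_1$-factor is directly a length-of-string sum that vanishes. The hard part will be the case $m_2(\bm_\hd) = 0$: here the factor $G(s_2(t_2), m_2(t_1))$ couples both indices through the indicator $[t_1 > 0]$, and the factorization breaks. The strategy will be to split the $t_1$-sum at $t_1 = 0$ and apply $G(s_2, 0) - G(s_2, m > 0) = q^{-1}$ (valid because $s_2(t_2) \geq -1$ throughout, which follows from $L_2 \leq s_2(\bm_\hd)+1$) in order to collapse $T$ onto a constant multiple of $\sum_{t_2} q^{t_2}\, G(s_5(t_2), m_5(t_2))$, which again vanishes by length-of-string. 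The totally resonant subcase $m_2(\bm_\hd) = m_6(\bm_\hd) = 0$ proceeds along the same lines, aided by the auxiliary identity $\sum_{t_1=1}^{L_1} q^{t_1}\, G(s_6(\bm_\hd) - t_1, m > 0) = -1$, itself an immediate consequence of length-of-string, which cleanly cancels the $t_1 = 0$ contribution.
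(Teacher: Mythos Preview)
Your plan is correct and follows essentially the same approach as the paper: both reduce via Proposition~\ref{Prop: empty intersection} to the rectangular double sum, perform the same three-way case split on the head (your $m_2(\bm_\hd)>0$ vs.\ $m_2(\bm_\hd)=0$ with $m_6(\bm_\hd)>0$ vs.\ totally resonant is exactly the paper's $m_2>m_6$, $m_2<m_6$, $m_2=m_6=0$), and collapse each case onto the telescoping length-of-string identity.

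One tactical difference worth noting: in the case $m_2(\bm_\hd)=0<m_6(\bm_\hd)$, the paper sums over $t_2$ first for each fixed $t_1$, observing that $G(s_2-t_2,t_1)$ is constant in $t_2$ (since $s_2-t_2\geq s_2-s_5-1\geq 0$ here), so the inner $t_2$-sum is immediately length-of-string. Your approach instead sums over $t_1$ first via the difference identity. This works, but be aware that the resulting inner $t_1$-sum is $-q^{-1}G(s_2(t_2),+)+q^{-1}G(s_6(\bm_\hd),+)$, and for this to be a \emph{constant} in $t_2$ (as your ``collapse'' claim requires) you need $s_2(t_2)\geq 0$ throughout, not merely $s_2(t_2)\geq -1$ as you state. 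This stronger inequality does hold in this case for the same reason the paper uses, so the argument goes through---just make that explicit when you fill in the details. In the totally resonant case your streamlined use of the auxiliary identity $\sum_{t_1\geq 1}q^{t_1}G(s_6-t_1,+)=-1$ is a bit cleaner than the paper's three-subcase computation for $t_1=0$, $0<t_1<L_1$, $t_1=L_1$.
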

\begin{Rem}
This result still holds when we consider spherical Whittaker functions for covers of $G_2$. As previously noted, the sums that appear depend on the cover, but Theorem \ref{Thm: crystal structure} is independent of the degree of the cover.
\end{Rem}
\begin{proof}
Suppose that the resonance family is of weight $\kappa=(k_1,k_2)$. Recall from Theorem \ref{Thm: all together} that for each $\bn\in RF(\bm)$
\begin{equation*}\label{eqn: outside evaluation 2}
I_\lam(\bn)=q^{-k(\bn)}(1-q^{-1})\prod_{\al\neq \ga_3,\ga_4}G(s_\al,n_\al)x_1^{k_1}x_2^{k_2},
\end{equation*}
where $k(\bn)$ is the decoration of $\bn$.
By Proposition \ref{Prop: empty intersection}, 
\[
RF(\bm)=\{f_1^{t_1}f_2^{t_2}\bm\::\: 0\leq t_1 \leq s_6+1,\;0\leq t_2\leq \min\{s_2, s_5\}+1\}.
\]

We first assume that $m_2< m_6$ so that  $s_5=\min\{s_2,s_5\}$. For each value of $t_1$, the $f_2$-root string $\{f_1^{t_1}f_2^{t_2}\bm\::\:\;0\leq t_2\leq s_5+1\}$ contributes
\begin{align*}
q^{t_1-k}(1-q^{-1})G(s_1,m_1)G(s_6-t_1,m_6+t_1)&\left[\sum_{t_2=0}^{s_5+1}q^{t_2}G(s_2-t_2,t_1)G(s_5-t_2,t_2)\right].
\end{align*}
Under these assumptions $G(s_2-t_2,t_1)$ is constant, and we are left with the sum
\[
\sum_{t_2=0}^{s_5+1}q^{t_2}G(s_5-t_2,t_2)= \left[1+\sum_{t_1=1}^{s_5}q^{t_1}(1-q^{-1})-q^{s_5}\right]=0.
\]
Thus the entire sum vanishes.

If we assume $m_2> m_6$, we have $s_2=\min\{s_2,s_5\}$. In this case, for each value of $t_2$, we consider the $f_1$-root string $\{f_1^{t_1}f_2^{t_2}\bm\::\:\;0\leq t_1\leq s_6+1\}$. This contributes
\begin{align*}
q^{t_2-k}(1-q^{-1})G(s_5-t_2,t_2)\left[\sum_{t_1=0}^{s_6+1}q^{t_1}G(s_2-t_2,m_2+t_1)G(s_6-t_1,t_1)\right].
\end{align*}
Again, the term $G(s_2-t_2,m_2+t_1)$ is constant, and the inner sum vanishes, showing that the entire sum vanishes.

Finally, consider the \emph{totally resonant case} $\bm= (m_1,0,0,m_4,0,0)$. In this case, we must sum over the entire rank two crystal to show vanishing. To begin, for each value of $t_1$ the $f_2$-root string $\{f_1^{t_1}f_2^{t_2}\bm\::\:\;0\leq t_2\leq s_5+1\}$ contributes
\begin{align*}
q^{t_1-k}(1-q^{-1})G(s_1,m_1)G(s_6-t_1,t_1)&\left[\sum_{t_2=0}^{s_5+1}q^{t_2}G(s_5-t_2,t_1)G(s_5-t_2,t_2)\right].
\end{align*}
If $t_1=0$, then $G(s_5-t_2,t_1)=1-\delta_{0,s_5-t_2}$, where $\delta$ is the Kronecker delta. This inner sum now has the form 
\begin{align*}
q^{t_1-k}G(s_6-t_1,t_1)\sum_{t_2=0}^{s_5}q^{t_2}G(s_5-t_2,t_2) = q^{s_5-k}.
\end{align*}
For $0<t_1<s_6+1$, $G(s_5-t_2,t_1)=1-q^{-1}-\delta_{0,s_5-t_2}$, and the sum gives
\begin{align*}
q^{t_1-k}G(s_6-t_1,t_1)\sum_{t_2=0}^{s_5+1}q^{t_2}G(s_5-t_2,t_1)G(s_5-t_2,t_2) = q^{t_1+s_5-k}(1-q^{-1}).
\end{align*}
Finally, for $t_1=s_6+1$, we have
\begin{align*}
q^{s_6+1-k}G(-1,\lam_1+1)\sum_{t_2=0}^{s_5+1}q^{t_2}G(s_5-t_2,t_1)G(s_5-t_2,t_2) = -q^{s_6+s_5-k}.
\end{align*}
Summing up these terms, we find that
\[
\sum_{\bn\in RF(\bm)}I_\lam(\bn) =0
\]
in all cases.
\end{proof}
We say that a resonance family is \textbf{$\lam$-relevant} if $RF(\bm)\cap \B(\lam+\rho)\neq \emptyset.$ The are only finitely many such families. Set $RF(\bm)^\circ =RF(\bm)-(RF(\bm)\cap\B(\lam+\rho))$.  The above gives the following ``geometric'' \textbf{Tokuyama-type formula for $G_2$}.
\begin{Thm}\label{Thm: resonance family formula}
Let $\lam\in X^\ast(T)^+$ be a dominant weight for the split complex Lie group of type $G_2$ and let $\chi_\lam$ be its character. Then for $\tau\in T(\cc)$,
\begin{align*}\label{eqn: resonance family formula}
\prod_{\al\in \Phi^+}\left(1-q^{-1}\tau^\al\right)\chi_\lam(\tau)=\sum_{\bn\in \B(\lam+\rho)}&{G_1}(\bn)\tau^{\mu(\bn)-w_0(\lam+\rho)}\\
&+\sum_{\shortstack{$RF(\bm)$ \\$\lam$-$\mathrm{relevant}$}}\left(\sum_{\bn\in RF(\bm)^\circ} G_{res}(\bn)\right)\tau^{\mu(\bn)-w_0(\lam+\rho)},
\end{align*}
where $G_1(\bn)$ and $G_{res}(\bn)$ are as in Theorem \ref{Thm: all together}.
\end{Thm}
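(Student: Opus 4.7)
The plan is to derive this as a direct consequence of the Casselman--Shalika formula, the MV expansion of $W_\tau(\varpi^\lambda)$, and the two main computational theorems already established: Theorem \ref{Thm: all together} which evaluates each $I_\lambda(\bm)$ and Theorem \ref{Thm: external vanishing} which kills full resonance families. No new ideas will be required; the work is in organizing the sum.

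First, I would rewrite the left-hand side. By the Casselman--Shalika formula (\ref{Casselman-Shalika}),
\[
\prod_{\alpha\in\Phi^+}(1-q^{-1}\tau^\alpha)\chi_\lambda(\tau) = W_\tau(\varpi^\lambda).
\]
Then the formal expansion (\ref{eqn: whittaker sum}) combined with Proposition \ref{Prop: f const} (which accounts for the torus-eigenvalue dependence on $\bm$ via $\tau^{\mu(\bm)-w_0(\lambda+\rho)}$ after tracking the normalization $\varpi^\lambda \leftrightarrow \varpi^{w_0\lambda}$ made in Section \ref{Section: Whittaker}) yields
\[
W_\tau(\varpi^\lambda) = \sum_{\bm\in\B(-\infty)} I_\lambda(\bm).
\]

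Second, I would partition the sum using Theorem \ref{Thm: all together}. That theorem shows $I_\lambda(\bm)=0$ unless either (i) $\bm\in \B(\lambda+\rho)$, in which case $I_\lambda(\bm)=G_1(\bm)\tau^{\mu(\bm)-w_0(\lambda+\rho)}$, or (ii) $\bm\in \B(\lambda+\rho)_{res}\setminus \B(\lambda+\rho)$ (that is, $\bm$ is resonant with $s_3=s_4=-2k<0$, $m_4>0$, all other $s_\alpha\geq -1$), in which case $I_\lambda(\bm)=G_{res}(\bm)\tau^{\mu(\bm)-w_0(\lambda+\rho)}$. Thus
\[
W_\tau(\varpi^\lambda) = \sum_{\bn\in \B(\lambda+\rho)} G_1(\bn)\tau^{\mu(\bn)-w_0(\lambda+\rho)} + \sum_{\bn\in \B(\lambda+\rho)_{res}\setminus \B(\lambda+\rho)} G_{res}(\bn)\tau^{\mu(\bn)-w_0(\lambda+\rho)}.
\]

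Third, I would regroup the second sum by resonance families. By the definition of the equivalence relation in Section \ref{Section: resonance families}, the set $\B(\lambda+\rho)_{res}$ decomposes as a disjoint union of resonance families $RF(\bm)$ indexed by their heads. Since resonance families preserve the weight $k(\bm)=(k_1,k_2)$ (which determines the torus exponent $\tau^{\mu(\bn)-w_0(\lambda+\rho)}$), I can pull the torus factor outside the inner sum over each family and write
\[
\sum_{\bn\in \B(\lambda+\rho)_{res}\setminus\B(\lambda+\rho)} G_{res}(\bn)\tau^{\mu(\bn)-w_0(\lambda+\rho)}
= \sum_{RF(\bm)} \Bigl(\sum_{\bn\in RF(\bm)^\circ} G_{res}(\bn)\Bigr)\tau^{\mu(\bm)-w_0(\lambda+\rho)}.
\]
Here the outer sum ranges over all resonance families, and I split it as those which are $\lambda$-relevant ($RF(\bm)\cap \B(\lambda+\rho)\neq\emptyset$) and those which are not. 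For a family that is not $\lambda$-relevant, $RF(\bm)^\circ = RF(\bm)$ and Theorem \ref{Thm: external vanishing} gives $\sum_{\bn\in RF(\bm)} I_\lambda(\bn)=0$, hence also $\sum_{\bn\in RF(\bm)} G_{res}(\bn)=0$. Thus all non-$\lambda$-relevant families contribute zero, and only the finitely many $\lambda$-relevant families survive, giving exactly the stated formula.

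The only subtle point, and the one I would have to handle carefully, is verifying that the factor $\tau^{\mu(\bn)-w_0(\lambda+\rho)}$ depends only on the weight class of the resonance family (so that it factors out of the inner sum over $RF(\bm)^\circ$); this is immediate from the definition of the raising and lowering operators $e_i,f_i$ on arrays in Section \ref{section: arrays}, which preserve the total weight $\kappa = a+6b+2c+d$ (and symmetrically the contribution from $m_1$ is untouched), so that $k(\bn)$ is constant along each family. The main obstacle would have been the vanishing of contributions outside $\B(\lambda+\rho)$, but this was precisely the content of Theorem \ref{Thm: external vanishing}, so the present theorem is a direct reassembly.
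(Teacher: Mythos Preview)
Your proposal is correct and follows precisely the approach of the paper: the theorem is stated immediately after Theorem \ref{Thm: external vanishing} with the words ``The above gives the following,'' meaning it is obtained by combining the Casselman--Shalika formula, the MV expansion, the evaluation in Theorem \ref{Thm: all together}, and the vanishing in Theorem \ref{Thm: external vanishing} exactly as you describe. Your observation that the torus exponent is constant on each resonance family is the one point the paper leaves implicit, and you justify it correctly via the weight-preservation of the operators $e_i,f_i$ (equivalently, via the requirement $k(\bm)=k(\bn)$ built into the definition of the equivalence relation).
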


To end this section, we give another Tokuyama-type formula by considering the sums over $\lam$-relevant resonance families. As we see below, for a given $\lam$-relevant family that is not totally resonant one can make a ``handedness'' choice to derive simple formulas for augmenting the contributions from $\bn\in RF(\bm)\cap \B(\lam+\rho)$. This reduces the ``correction term'' to the totally resonant case, which does not appear to admit a natural augmentation.

We begin by stating the \textbf{right-hand rule} for augmenting contributions. This is done by summing ``from right to left'' along $f_1$-root strings.
\begin{Prop}\label{Prop: right hand rule}
In the case that $n_3=n_5$, $n_2\neq n_6$ with $\min\{n_2,n_6\}>0$, and $s_3=s_4=0$, we obtain the augmented contribution 
\begin{equation*}\label{eqn: right augmented 1}
\tilde{I}^{r}_\lam(\bn) = \prod_{i=1,2,5}G(s_i,n_i)\left[\prod_{j=3,4,6}G(s_j,n_j)+q^{-1}(1-q^{-1})\right]x_1^{k_1}x_2^{k_2}.
\end{equation*}
If  $n_6>n_2=0$, we must sum along the unique $f_2$-root string. This requires $n_3=n_5>0$ and gives the augmented contribution
\begin{equation*}\label{eqn: rl augmented 2}
\tilde{I}^{r,l}_\lam(\bn) = \prod_{i\neq4,5}G(s_i,n_i)\left[\prod_{j=4,5}G(s_j,n_j)+q^{-1}\right]x_1^{k_1}x_2^{k_2}.
\end{equation*}
For other resonant Lusztig data with $n_2\neq n_6$, we set $\tilde{I}^{r}_\lam(\bn) = I_\lam(\bn)$.
\end{Prop}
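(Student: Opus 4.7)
The plan is to prove this identity by summing the external contributions from $RF(\bm)^\circ$ to each boundary element $\bn\in RF(\bm)\cap \B(\lam+\rho)$ along its $f_1$-root string (first case) or $f_2$-root string (second case), and matching the total to the claimed augmentation. First I would use Theorem \ref{Thm: crystal structure} to parametrize $RF(\bm)$ as $\{f_1^{t_1}f_2^{t_2}\bm\}$ and observe that $RF(\bm)\cap \B(\lam+\rho)$ corresponds exactly to the diagonal $t_1+t_2=k(\bm)$. For a fixed boundary element $\bn$ in the first case, the external ancestors of $\bn$ in its $f_1$-string are $\bn^{(j)}:=e_1^j\bn$ for $j=1,\ldots,J$ with $J=\min\{n_2,n_6\}$. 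From the array definition of $e_1$ in Section \ref{section: arrays}, each $\bn^{(j)}$ has $n_1,n_3,n_5$ invariant while $n_2\mapsto n_2-j$, $n_4\mapsto n_4+j$, $n_6\mapsto n_6-j$; plugging into the formulas defining the $s_\al$ yields $s_1,s_2,s_5$ constant and $s_6^{(j)}=s_6+j$, $s_3^{(j)}=s_4^{(j)}=-2j$.

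Next, I would verify combinatorially that the hypotheses $n_3=n_5$, $s_3=s_4=0$, $\bn\in\B(\lam+\rho)$, $n_2\neq n_6$, and $\min\{n_2,n_6\}>0$ together force $s_2,s_5\geq 0$: the edge cases $s_2=-1$ and $s_5=-1$ each collapse, after substituting the expressions for $n_2$ and $n_5$ coming from the resonance and $s_3=0$ conditions, to $n_2=n_6$, contradicting the hypothesis. With this in hand, Theorem \ref{Thm: all together} and the $e_1$-invariance let me factor the invariant $G$-factors out of the sum:
\begin{align*}
\sum_{j=1}^{J}G_{res}(\bn^{(j)})&=G(s_1,n_1)G(s_5,n_5)x_1^{k_1}x_2^{k_2}\sum_{j=1}^{J}q^{-j}(1-q^{-1})G(s_2,n_2-j)G(s_6+j,n_6-j).
\end{align*}

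The inner sum is a telescoping geometric series: since $s_2\geq 0$ and $s_6+j\geq 0$ for $j\geq 1$, we have $G(s_\al,n_\al-j)=1-q^{-1}$ for $j<n_\al$ and $G(s_\al,0)=1$ at $j=n_\al$, so routine calculation shows the sum equals $(1-q^{-1})^2q^{-1}$ whether $J=n_2$ or $J=n_6$. Adding this to $I_\lam(\bn)=\prod_\al G(s_\al,n_\al)x_1^{k_1}x_2^{k_2}$ produces precisely $\tilde{I}^{r}_\lam(\bn)$. The second case ($n_6>n_2=0$) proceeds analogously along the $f_2$-root string via ancestors $e_2^j\bn$, which preserve $n_1,n_2,n_6$ and send $n_5\mapsto n_5-j$ (with $n_3\mapsto n_3-j$ to preserve resonance) and $n_4\mapsto n_4+3j$. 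A parallel invariance computation shows $s_1$ and $s_6$ are constant along the $f_2$-string, so $G(s_1,n_1)G(s_6,n_6)$ factors out and the remaining telescoping sum collapses to $q^{-1}(1-q^{-1})$, yielding the $q^{-1}$ correction in $\tilde{I}^{r,l}_\lam(\bn)$. The main obstacle throughout is the combinatorial ruling-out of the edge cases $s_2=-1$ and $s_5=-1$ (and their analogues in the second case): once these are excluded, the $e_1$-invariance of the relevant $s_\al$ follows by direct substitution, and the resulting telescoping algebra matches the closed form in a routine manner.
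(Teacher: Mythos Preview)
The paper omits the proof entirely, calling it ``computational and straightforward,'' so your sketch is presumably exactly the intended argument: pull external terms along $f_1$-strings (respectively $f_2$-strings) onto the boundary, factor out the $e_1$-invariant $G$-factors, and telescope. Your invariance computations for $s_1,s_2,s_5$ under $e_1$ and the telescoping evaluation are correct.

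There is one slip in your edge-case analysis. The claim that $s_2=-1$ forces $n_2=n_6$ is false. Using $n_3=n_5$ one has $s_2=s_5+n_6-n_2$, so $s_2=-1$ gives $n_2-n_6=s_5+1\ge 0$, i.e.\ only $n_2\ge n_6$. Combined with the hypothesis $n_2\neq n_6$ this yields $n_2>n_6$, hence $J=\min\{n_2,n_6\}=n_6$, and then $n_2-j>0$ for every $j\le J$ so that $G(s_2,n_2-j)=G(s_2,n_2)$ is genuinely constant along the string even when $s_2=-1$. In other words, you do not need $s_2\ge 0$; what you actually need (and what the hypotheses do give) is that the problematic case $s_2=-1$ with $J=n_2<n_6$ cannot occur. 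The worry about $s_5=-1$ is superfluous: both $s_5$ and $n_5$ are $e_1$-invariant, so $G(s_5,n_5)$ factors out regardless. With this correction your telescoping gives the inner sum as $G(s_2,n_2)\,q^{-1}(1-q^{-1})$ (rather than $(1-q^{-1})^2q^{-1}$, which is only the special case $s_2\ge 0$), and this matches the claimed augmentation on the nose.
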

We now state the \textbf{left-hand rule} for augmenting contributions by summing over $f_2$-root strings.
\begin{Prop}\label{Prop: left hand rule}
In the case that $n_3=n_5>0$, $n_2\neq n_6$, and $s_3=s_4=0$,  the left-hand augmented contribution is
\begin{equation*}\label{eqn: left augmented}
\tilde{I}^{l}_\lam(\bn) = \prod_{i\neq4,5}G(s_i,n_i)\left[\prod_{j=4,5}G(s_j,n_j)+q^{-1}\right]x_1^{k_1}x_2^{k_2}.
\end{equation*}
If $n_3=n_5=0$, $n_2>n_6>0$, and $s_2=-1$, then there are no $f_2$-root strings and we have the augmented contribution 
\begin{equation*}\label{eqn: right augmented 1}
\tilde{I}^{r,l}_\lam(\bn) = \prod_{i=1,2,5}G(s_i,n_i)\left[\prod_{j=3,4,6}G(s_j,n_j)+q^{-1}(1-q^{-1})\right]x_1^{k_1}x_2^{k_2}.
\end{equation*}
For other resonant Lusztig data with $n_2\neq n_6$, we set $\tilde{I}^{l}_\lam(\bn) = I_\lam(\bn)$.
\end{Prop}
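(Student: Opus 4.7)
The proof mirrors that of Proposition \ref{Prop: right hand rule} with the roles of $f_1$ and $f_2$ interchanged: the augmentation $\tilde{I}^l_\lam(\bn)$ arises from summing MV integrals along the $f_2$-root string attributed to $\bn$ in $RF(\bm)^\circ$ (or along the $f_1$-root string in the degenerate Case 2). The verification is a direct case-by-case computation from Theorem \ref{Thm: all together} combined with a short geometric sum identity.

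For Case 1 ($n_3 = n_5 > 0$, $n_2 \neq n_6$, $s_3 = s_4 = 0$), the element $\bn$ sits at parameter $t_2 = n_5 > 0$ along its $f_2$-root string, and the attributed elements of $RF(\bm)^\circ$ are $\bn^{(j)} := e_2^j(\bn)$ for $j = 1, \ldots, n_5$. Unwinding the action of $e_2$ on the array data yields $n_3^{(j)} = n_5^{(j)} = n_5 - j$, $n_4^{(j)} = n_4 + 3j$, with $s_2^{(j)} = s_2 + j$, $s_5^{(j)} = s_5 + j$, $s_3^{(j)} = s_4^{(j)} = -2j$, and $s_1, s_6, n_1, n_2, n_6$ preserved. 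Substituting into the outside-crystal formula gives
\begin{equation*}
I_\lam(\bn^{(j)}) = q^{-j}(1-q^{-1})\, G(s_1,n_1)\, G(s_2+j, n_2)\, G(s_5+j, n_5-j)\, G(s_6, n_6)\, x_1^{k_1}x_2^{k_2},
\end{equation*}
in which $G(s_2+j, n_2)$ is constant for $j \geq 1$. The identity $\sum_{j=1}^{n_5} q^{-j}G(s_5+j, n_5-j) = q^{-1}$, obtained directly by geometric summation with the endpoint contribution $G(s_5+n_5, 0) = 1$ supplying the remainder, then combines with $I_\lam(\bn)$ to give the stated formula.

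Case 2 ($n_3 = n_5 = 0$, $n_2 > n_6 > 0$, $s_2 = -1$) handles the degenerate situation in which $RF(\bm)$ collapses to a single $f_1$-root string: the stated conditions force the head $\bm = (n_1, n_2 - n_6, 0, n_4 + n_6, 0, 0)$ to satisfy $\min\{s_2, s_5\}(\bm) = -1$, so the $f_2$-direction is trivial. The augmentation instead sums $I_\lam(e_1^j \bn)$ for $j = 1, \ldots, n_6$; analogous computations with $n_2, n_6 \mapsto n_2-j, n_6-j$, $n_4 \mapsto n_4+j$, and $s_6 \mapsto s_6 + j$ (other $s_i$ fixed), together with $G(s_2, n_2-j) = -q^{-1}$ constant and the same identity $\sum_{j=1}^{n_6} q^{-j}G(s_6+j, n_6-j) = q^{-1}$, yield the stated formula. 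The remaining resonant $\bn$ with $n_2 \neq n_6$ receive no attributed elements of $RF(\bm)^\circ$ under the left-hand rule, by the partitioning argument parallel to that of Proposition \ref{Prop: right hand rule}, so $\tilde{I}^l_\lam(\bn) = I_\lam(\bn)$.

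The main subtlety is the careful case analysis of boundary behavior of the Tokuyama factors $G(s_\al, n_\al)$ when some $s_\al = -1$ or $n_\al = 0$ (so that $G(s_\al, n_\al) = -q^{-1}$ or $0$ rather than $1-q^{-1}$), together with correctly identifying the attributed root string and how its endpoint factor $G(\cdot, 0)$ specializes in each subcase. The geometric sum identity above is robust enough to accommodate these edge cases uniformly, but distinguishing which factors of the product $\prod_{i\neq 4,5} G(s_i, n_i)$ arise from the head $\bm$ versus from $\bn$ itself—particularly in the crossover between Case 1 and Case 2 as the head type of $RF(\bm)$ varies—requires the bookkeeping that constitutes the bulk of the verification.
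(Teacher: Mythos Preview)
The paper omits the proof entirely, stating only that ``the proofs of these Propositions are computational and straightforward, so we omit the details.'' Your sketch is therefore more detailed than anything the paper provides, and it follows exactly the intended approach: sum the contributions of Theorem \ref{Thm: all together} along the $e_2$-string above $\bn$ (resp.\ the $e_1$-string in the degenerate Case 2) and collapse via a telescoping geometric sum.

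One point deserves more care. You assert that $G(s_2+j,n_2)$ is constant for $j\geq 1$ and then ``combine with $I_\lam(\bn)$'' to obtain the stated product. The constant value for $j\geq 1$ is $1-q^{-1}$ (or $1$ if $n_2=0$), but to factor out $\prod_{i\neq 4,5}G(s_i,n_i)$ you need this to coincide with $G(s_2,n_2)$ itself, i.e.\ you need $s_2\geq 0$. From $s_2-s_5=n_6-n_2$ and $s_5\geq -1$ this holds automatically when $n_2<n_6$, which is precisely the regime in which the paper declares the left-hand rule ``uniform'' and the only regime in which it is subsequently invoked (see the Definition following the two Propositions). But the Proposition as stated allows $n_2>n_6$, and there the edge case $s_2=-1$ genuinely breaks the clean factorization: the string sum produces a factor $(1-q^{-1})$ where the stated formula has $G(s_2,n_2)=-q^{-1}$. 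Your final paragraph gestures at such boundary bookkeeping, but this particular case is not merely bookkeeping---it is exactly the non-uniformity the paper alludes to, and it is why the Definition selects the right-hand rule when $n_2>n_6$. You should either restrict Case 1 to $n_2<n_6$ or note explicitly that the stated identity holds only up to this boundary discrepancy.
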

The proofs of these Propositions are computational and straightforward, so we omit the details. 
Note that if $n_2<n_6$,  the left-hand rule is uniform, while the right-hand rule is uniform if $n_2>n_6$.\footnote{This distinction is similar to the Class I, Class II, and totally resonant cases arising in the analysis of Fourier coefficeints of metaplectic Eisenstein series of type $B$ in \cite{FZ15}.} This motivates the following choice.
\begin{Def} Let $\bn\in \B(\lam+\rho)$ and assume that $n_3=n_5$ and $s_3=s_4=0$. If $n_2<n_6$, we set $$\tilde{I}_\lam(\bn) = \tilde{I}^l_\lam(\bn);$$ if $n_2>n_6$, we set $$\tilde{I}_\lam(\bn) = \tilde{I}^r_\lam(\bn).$$

When $\bm$ is not resonant, we set $\tilde{I}_\lam(\bn) = I_\lam(\bn)$ and $\tilde{I}_\lam(\bn)=\tilde{G}(\bn)x^{\mu(\bn)-w_0(\lam+\rho)}$, where $\mu(\bn)=k(\bn)+w_0(\lam+\rho)$.
\end{Def}
Recall that for a $\lam$-relevant resonance family $RF(\bm)$, we set $RF(\bm)^\circ =RF(\bm)-(RF(\bm)\cap\B(\lam+\rho))$. Combining Propositions \ref{Prop: right hand rule}, \ref{Prop: left hand rule}, and Theorem \ref{Thm: resonance family formula} gives us the following formula.
\begin{Thm}\label{Thm: tokuyamatype}
Let $\lam\in X^\ast(T)^+$ be a dominant weight for the split complex Lie group of type $G_2$ and let $\chi_\lam$ be its character. Then for $\tau\in T(\cc)$,
\begin{align*}
\prod_{\al\in \Phi^+}\left(1-q^{-1}\tau^\al\right)\chi_\lam(\tau)=\sum_{\bn\in \B(\lam+\rho)}&\tilde{G}(\bn)\tau^{\mu(\bn)-w_0(\lam+\rho)}\\
&+\sum_{\shortstack{$RF(\bm)$ \\$\lam$-$\mathrm{relevant}$\\{totally resonant}}}\left(\sum_{\bn\in RF(\bm)^\circ} G_{res}(\bn)\right)\tau^{\mu(\bn)-w_0(\lam+\rho)},
\end{align*}
\end{Thm}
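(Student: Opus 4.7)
The plan is to assemble the formula by combining every tool developed over the preceding sections, so the proof is essentially a bookkeeping argument once the framework is in place. First, I would invoke the Casselman--Shalika formula \eqref{Casselman-Shalika} to identify the left-hand side with $W_\tau(\vp^\lam)$, and then substitute the formal MV expansion \eqref{eqn: whittaker sum}, reducing the problem to the evaluation of
\[
W_\tau(\vp^\lam)=\sum_{\bm\in\B(-\infty)}I_\lam(\bm).
\]
Using Theorem \ref{Thm: all together}, I would split the indexing set $\B(-\infty)$ into three disjoint parts: (i) $\bm\in\B(\lam+\rho)$, (ii) $\bm\in\B(\lam+\rho)_{res}\setminus\B(\lam+\rho)$, and (iii) everything else. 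Theorem \ref{Thm: all together} immediately kills the contribution from (iii), and gives on (i) and (ii) the explicit contributions $G_1(\bn)x^{k(\bn)}$ and $G_{res}(\bn)x^{k(\bn)}$ respectively. Translating $x^{k(\bn)}=\tau^{\mu(\bn)-w_0(\lam+\rho)}$ recovers the structure of Theorem \ref{Thm: resonance family formula}, which is already a valid expansion but not yet in the desired form.

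Next, I would regroup the sum over (ii) by partitioning $\B(\lam+\rho)_{res}\setminus\B(\lam+\rho)$ into resonance families $RF(\bm)^\circ$, applying Theorem \ref{Thm: external vanishing} to discard every resonance family that is not $\lam$-relevant. The remaining sum is thus over $\lam$-relevant resonance families only, which is a finite collection. At this point the formula reads
\[
\prod_{\al\in\Phi^+}(1-q^{-1}\tau^\al)\chi_\lam(\tau)=\sum_{\bn\in\B(\lam+\rho)}G_1(\bn)\tau^{\mu(\bn)-w_0(\lam+\rho)}+\sum_{\substack{RF(\bm)\\\lam\text{-relevant}}}\sum_{\bn\in RF(\bm)^\circ}G_{res}(\bn)\tau^{\mu(\bn)-w_0(\lam+\rho)}.
\]

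The decisive step is then to use Propositions \ref{Prop: right hand rule} and \ref{Prop: left hand rule} to absorb the $\lam$-relevant resonance-family error terms into the main sum whenever possible. For any $\lam$-relevant family $RF(\bm)$ that is not totally resonant, the head $\bm$ satisfies either $n_2>n_6$ or $n_2<n_6$ (with the appropriate extremal entries nonzero), and in each case the corresponding handedness rule produces an element $\bn_0\in RF(\bm)\cap\B(\lam+\rho)$ together with an augmentation of its contribution $G_1(\bn_0)\rightsquigarrow\tilde G(\bn_0)$ whose difference exactly equals $\sum_{\bn\in RF(\bm)^\circ}G_{res}(\bn)\,\tau^{\mu(\bn)-w_0(\lam+\rho)}$. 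This is where the truncated-crystal structure of Theorem \ref{Thm: crystal structure} together with Corollary \ref{Cor: empty bound} does the real work: because the $f_1$- or $f_2$-root strings in a non-totally-resonant $\lam$-relevant family each meet $\B(\lam+\rho)$ exactly once at their ``edge,'' the geometric-series-like collapse seen in the proof of Theorem \ref{Thm: external vanishing} applies to each root string modulo the boundary term, and the boundary term is precisely the augmentation prescribed by the handedness rule.

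For totally resonant $\lam$-relevant families, neither handedness rule applies uniformly along the whole rank-two crystal: the $(m_2=m_6=0)$ symmetry blocks a canonical absorption, and the corresponding correction persists as the second sum in the statement. Combining the augmentations on all non-totally-resonant $\lam$-relevant families with the definition $\tilde I_\lam(\bn)=\tilde G(\bn)\tau^{\mu(\bn)-w_0(\lam+\rho)}$ produces the first sum over $\B(\lam+\rho)$; the totally resonant families remain as the geometric error term. The main obstacle is verifying the absorption identity in the non-totally-resonant case, which requires careful case analysis depending on whether $n_2<n_6$, $n_2>n_6$, or one of the special sub-cases in Propositions \ref{Prop: right hand rule}, \ref{Prop: left hand rule} occurs; once these local identities are checked the global formula follows by linearity.
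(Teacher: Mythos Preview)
Your proposal is correct and follows essentially the same approach as the paper, which simply states that the formula follows by combining Propositions \ref{Prop: right hand rule}, \ref{Prop: left hand rule}, and Theorem \ref{Thm: resonance family formula}; you have just unpacked this combination (and re-derived Theorem \ref{Thm: resonance family formula} along the way). One small imprecision: for a non-totally-resonant $\lam$-relevant family the augmentation is not applied to a single element $\bn_0$ but to each element of $RF(\bm)\cap\B(\lam+\rho)$ lying at the end of the relevant $f_1$- or $f_2$-root string, and Corollary \ref{Cor: empty bound} is not actually needed for the absorption step.
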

We end this section by noting that this formula \emph{does not} recover term-by-term the conjectured formula of \cite{friedlander2015crystal}, even for the case of $\lam=0$. Their formula has recently been established in \cite{defranco}, so it follows that their Tokuyama-type formula agrees with the ``geometric''  formula here derived, but the bad-middle phenomenon does not correspond to the correction terms arising from resonant MV polytopes. Nevertheless, both formulas may be interpreted in the form
\[
\left(\sum_{\nu \in \B(\lam+\rho)}\text{appropriate standard contribution}\right)+\text{finitely many correction terms}.
\]
It would be interesting to give a geometric interpretation for the augmented contributions they assign to bad-middle Littelman paths.

\section{The general notion of resonance}\label{Section: resonance}

In this final section, we discuss how the analysis of the previous section, and in particular the notion of resonant Lusztig data, might be generalized. We make no assumption on the degree $n$ of the covering group. Let $M$ by an MV polytope with $\underline{i}$-Lusztig data $\bm$, and assume that Conjecuture \ref{Conjecture} holds for $\i$. Assume we have made a choice of monomials $\{Y_k\}_{k=1}^N$ by associating to each root the associated $\i$-trail. Generalizing the formula (\ref{eqn: general formula}) in the case $\bm>0$, we have the formula
 \begin{equation*}\label{eqn: general formula1}
 I_\lam(\bm)= \displaystyle\prod_{\al}x_{\al}^{m_\al}\int\cdots\int\prod_k(Y_k,\vp)^{r_k}\prod_{i\in\Del}\psi_\lam\left(\sum_{\pi:\Lam_i\rightarrow w_0s_i\Lam_i}d_\pi Y_{1}^{c_\pi(1)}\cdots Y_{N}^{c_\pi(N)}\right)\prod_{k}dY_k,
 \end{equation*}
where the domain of integration and the precise exponents $c_\pi(k)\in \zz$ depend on the circling pattern of $\bm$. Define the map $$\mathrm{circ}: \zz^N_{\geq0}\lra 2^N,$$ such that $(\mathrm{circ}(\bm))_i=1$ if $m_i>0$ and $0$ if $m_i=0$. This map records the circling pattern of $\bm$.
By construction, the monomials $\{Y_k\}_{k=1}^N$ each occur in the additive character. For each subset $S\subset [N]$, we define $f_S(X_\al)\in \calo[Y_\al]$ to be the sum of monomials containing only $Y_k$ for $k\in S$. For $\nu=\mathrm{circ}(\bm) \in 2^N$, let $\mathcal{P}_{\i}(\nu)$ the be the finest partition of $[N]$ such that $f_S\neq 0$ for $S\in \mathcal{P}_{\i}(\nu)$. Then there exists a decomposition 
\[
\sum_{i\in I}\sum_{\pi:\Lam_i\rightarrow w_0s_i\Lam_i}\vp^{\lam_{i^\ast}}d_\pi Y_{1}^{c_\pi(1)}\cdots Y_{N}^{c_\pi(N)}=\sum_{S\in\mathcal{P}_{\i}(\nu) }f_S(Y_\al),
\]
and it is easy to see that this decomposition only depends on $\mathrm{circ}(\bm)$.
This leads to the decomposition $I_\lam(\bm)=\prod_{S\in \mathcal{P}_{\i}(\nu)}I_{\lam,S}(\bm)$, where
 \begin{equation}\label{eqn: resonance formula}
 I_{\lam,S}(\bm)= \displaystyle\prod_{k\in S}x_{\ga_k}^{m_k}\int\cdots\int\prod_{k\in S}(Y_k,\vp)^{r_k}\psi\left(f_S(Y_\al)\right)\prod_{k\in S}dY_k.
\end{equation}
This decomposition depends only on the choice of word $\i$ and $\mathrm{circ}(\bm)$. These integrals encode the notion of resonance.
\begin{Def} 
 We say $\bm$ is \textbf{resonant} with respect to $\i$, $\lam$, and $S$ if there are two indices $k,l\in S$, $k\neq l$,  such that for any $u\in C^{\i}(\bm)$ 
\[
\val(Y_k)=\val(Y_l).
\]
We say that $\bm$ is \textbf{critically resonant} if there is an $S$ such that $\val(Y_k)=\val(Y_l)$ for all $k,l\in S$.
\end{Def}

Suppose that $M$ is the MV polytope associated to the $\i$-Lusztig data $\bm$. A resonance of $\bm$ corresponds to the existence of certain additional symmetries of $M$. In the case of $G_2$ above, the unique critical resonance corresponded to the equality $m_3=m_5$, though in general resonances will correspond to certain affine relations among the Lusztig data that depend of the cocharacter $\lam$.

At present, we do not know how to conjecture a uniform evaluation of $I_\lam(\bm)$ for resonant data. To do so would require detailed knowledge of the algebraic relations between the monomials arising in $\mathfrak{s}_i(u)$, so is at least as complex as computing all appropriate $\i$-trails. Additionally, the precise relations depend on the circling pattern of $\bm$, increasing the complexity by a factor of $2^N$, though as we saw in Lemma \ref{Lem: simplifying} there is a lot of redundancy in these cases.
This question is the subject of future work. We end with the following conjecture, which generalizes Theorem \ref{Thm: all together} as well as \cite[Theorem 8.4]{McN}.
\begin{Conj}
Fix a long word $\i$. For a given dominant coweight $\lam$, if $I_\lam(\bm)\neq0$ while $(\i,\bm)\notin \B(\lam+\rho)$, then $\bm$ is critically resonant.
\end{Conj}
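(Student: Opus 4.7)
The strategy is to prove the contrapositive: assume $\bm$ is not critically resonant and $(\i,\bm)\notin \B(\lam+\rho)$, and deduce $I_\lam(\bm)=0$. Using the factorization
\[
I_\lam(\bm)=\prod_{S\in\mathcal{P}_{\i}(\nu)} I_{\lam,S}(\bm),
\]
it suffices to exhibit some block $S_0$ with $I_{\lam,S_0}(\bm)=0$. By \cite[Theorem 7.8]{McN}, our hypothesis supplies a simple index $i$ and an $\i$-trail $\pi:\Lam_i\to w_0s_i\Lam_i$ whose Laurent monomial $Y_\pi$ satisfies $a_\pi:=\val(\vp^{\lam_{i^\ast}}Y_\pi)<-1$ on $C^{\i}(\bm)$. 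Let $S_0$ be the block containing $Y_\pi$. Since $\bm$ is not critically resonant, the set $\{a_\sigma:\sigma\in S_0\}$ is not constant, so the minimum $a_{\min}\leq a_\pi<-1$ is attained on a proper subset $S_{\min}\subsetneq S_0$.

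The heart of the argument is a vanishing lemma for $I_{\lam,S_0}(\bm)$. Factor
\[
\sum_{\sigma\in S_0}d_\sigma\vp^{a_\sigma}Y_\sigma=\vp^{a_{\min}}\Phi(W_\bullet),
\]
where $\Phi(W_\bullet)=\Phi_0(W_\bullet)+\vp\cdot(\text{Laurent polynomial})$ and $\Phi_0:=\sum_{\sigma\in S_{\min}}d_\sigma W^{c_\sigma}$ is a non-zero unit-valued Laurent polynomial on the toric chart by positivity of the $d_\sigma$ (Proposition \ref{Prop: character sum}). The aim is to locate a coordinate $W_{k_0}$ on which $\Phi_0$ has nonvanishing derivative modulo $\vp$ and to perform the substitution $W'_{k_0}=\Phi$. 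Fubini together with the unit Jacobian then reduces the integral to an inner oscillatory integral
\[
\int_{\calo^\times}\psi(\vp^{a_{\min}}W'_{k_0})\,dW'_{k_0}=0,
\]
which vanishes by orthogonality of $\psi$ on $\calo^\times$ since $a_{\min}<-1$. The metaplectic Gauss-sum factors $(W_k,\vp)^{r_k}$ are invariant under $(1+\vp\calo)$-translations and cause no obstruction.

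The principal obstacle is producing such a coordinate $W_{k_0}$ uniformly in $\bm$ and in the block $S_0$. When $S_{\min}=\{\sigma_0\}$ is a singleton, the leading term is a monomial $d_{\sigma_0}W^{c_{\sigma_0}}$ and completing the primitive part of $c_{\sigma_0}$ to a basis of the exponent lattice produces a substitution of degree equal to $\gcd(c_{\sigma_0})$; the coprime case already yields a unit substitution and the general case requires a short residue-field argument using surjectivity of the appropriate power maps on $(\calo/\vp)^\times$. When $|S_{\min}|\geq 2$ -- the subtler case of resonance that falls short of critical resonance -- $\Phi_0$ is a sum of distinct monomials of common valuation and one must additionally verify that the Jacobian ideal of $\Phi_0$ is not contained in $\vp$. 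This is plausibly a convex-geometric fact about the exponents $c_\sigma$ of $\i$-trails in $V(\Lam_i)$, and in the simply-laced case one may attempt a direct proof via Lindström--Gessel--Viennot lattice paths; for multiply-laced types one can instead try induction on $|S_0|$, using weaker instances of the conjecture on proper sub-blocks to recursively dispose of the higher-order corrections $\vp\cdot(\cdots)$ in $\Phi$.
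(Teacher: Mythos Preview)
The statement you are attempting to prove is presented in the paper as an open \emph{conjecture}, not a theorem; the paper offers no proof and explicitly flags it as a generalization of the $G_2$ computation and of \cite[Theorem~8.4]{McN}. So there is no proof in the paper to compare your proposal against. Your task is therefore not to match an existing argument but to supply one where none exists, and your proposal does not do this: by your own phrasing (``plausibly a convex-geometric fact'', ``one may attempt'', ``one can instead try'') the central step is left as a hope rather than an argument.

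More concretely, there are two genuine gaps. First, the definition of \emph{critically resonant} in the paper concerns only the valuations of the distinguished monomials $Y_k$ for $k\in S$, whereas $f_S$ in general contains additional monomials coming from $\i$-trails that are not among the chosen $\{Y_k\}$ (e.g.\ the term $X_3^2/X_4$ in the $G_2$ case). Your leading term $\Phi_0$ must account for \emph{all} monomials of minimal valuation in $f_{S_0}$, and ``not critically resonant'' gives you no direct control over those extra terms; your passage from ``$\{a_\sigma:\sigma\in S_0\}$ not constant'' to ``$S_{\min}$ proper'' conflates the index set $S_0\subset[N]$ with the set of $\i$-trails contributing to $f_{S_0}$. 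Second, even granting that conflation, the Jacobian non-degeneracy you need when $|S_{\min}|\geq 2$ is exactly the hard part, and you have not proved it; the $G_2$ analysis in the paper handles this by explicit case-work (completing the square, decomposing over residues), not by a general Jacobian argument, and there is no indication that such an argument exists uniformly in $\i$. Note also that the entire framework you invoke (the formula for $I_\lam(\bm)$ in terms of the $Y_k$) already presupposes the paper's Conjecture~\ref{Conjecture}, which is itself open in general.
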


How the additional structure on the set of critically resonant Lusztig data in Theorem \ref{Thm: crystal structure} generalizes is an interesting and challenging problem. It would be interesting to find a purely geometric representation-theoretic method of isolating these structures.

\appendix
\section{Verification of Conjecture \ref{Conjecture} for $A_3$}\label{App: type A}
In this appendix, we verify Conjecture \ref{Conjecture} in the simplest case. Let $G=\SL(4)$, and consider the reduced expression $\i=(3,2,1,2,3,2)$. We enumerate the three simple roots in the standard way. The induced ordering on the positive roots $\{\ga_k\}_{k=1}^6$ is given by:
\[
\al_1<\al_1+\al_2<\al_1+\al_2+\al_3<\al_3<\al_2+\al_3<\al_2.
\]
Given our choice of $\i$, there is a unique $\i$-trail from $\Lam_2\lra w_0s_2\Lam_2$, two from $\Lam_3\lra w_0s_3\Lam_3$, and four $\i$-trails from $\Lam_1\lra w_0s_1\Lam_1$. This may be checked by applying \cite[Proposition 9.2]{BZ01}, as $\Lam_k$ is always minuscule in type $A$. Then by Proposition \ref{Prop: character sum}, we find that if $u=z_{\i}(b_\bullet)$, then $\fs_2(u)=1/b_6=X_6$, $\fs_3(u)=1/b_5+b_6/b_4b_5=X_5+X_4$, and finally 
\[
\fs_1(u) = \frac{1}{b_3}+\frac{b_4}{b_2b_3}+\frac{b_5b_6}{b_1b_2b_3}+\frac{b_6}{b_2b_3}=X_3+X_2+X_1+\frac{X_2X_4}{X_5}.
\]

 Following the construction of $g_i(t_\al,w_\al)$ in Section \ref{Section: boundary}, we have $g_2(t_\al,w_\al) = t_6$, 
\[
g_2(t_\al,w_\al) = t_5+\frac{t_4w_5}{w_6}, \text{  and  }g_1(t_\al,w_\al)=t_3+\frac{t_2w_3}{w_4}+\frac{t_1w_2w_3}{w_5w_6}+\frac{t_2t_4w_3}{w_4w_6}.
\]
On the other hand, if we apply the algorithm (\ref{Algo 2}), we find that $h_2=g_2$, $h_3=g_3$, but
\[
h_1(t_\al,w_\al)=g_1(t_\al,w_\al)+\frac{t_3w_4}{w_6}\left(1-\frac{t_4}{w_4}\right).
\]
As before, we denote the bounding data $s_k=\val(\vp^{\lam_{i^\ast}}X_k)$.
\begin{Prop}
Let $\i=(3,2,1,2,3,2)$. For any $\i$-Lusztig datum $\bm$ and dominant coweight $\lam$, we have the equality
\begin{equation}\label{what we want}
I_\lam(\bm) =  \int_{C^{\i}(\bm)}f(u)\psi\left(\sum_{i\in I}\vp^{\lam_{i^\ast}}g_i(t_\al,w_\al)\right)du.
\end{equation}
That is, Conjecture \ref{Conjecture} holds in this case.
\end{Prop}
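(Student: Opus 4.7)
The strategy is to exploit the explicit form of the discrepancy $E := h_1 - g_1 = \frac{t_3 w_4}{w_6}\bigl(1 - \frac{t_4}{w_4}\bigr)$. Since $h_i = g_i$ for $i = 2, 3$, identity~(\ref{what we want}) is equivalent to showing that the correction integral
\[
\int_{C^{\i}(\bm)} f(u)\, \psi\Bigl(\sum_{i\in I} \vp^{\lam_{i^*}} g_i\Bigr)\bigl(\psi(\vp^{\lam_3} E) - 1\bigr)\, du = 0
\]
vanishes for every Lusztig datum $\bm$. I will prove this by stratifying $C^{\i}(\bm)$ by the value of $m_4$. On the stratum $m_4 > 0$, Algorithm~\ref{Algo 2} forces $t_4 = w_4$, so $E \equiv 0$ pointwise and the integrand is identically zero.

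The nontrivial case is $m_4 = 0$, where $w_4 = 1$, $t_4 \in \calo$, and $E = t_3(1 - t_4)/w_6$. The plan is to evaluate the $(t_3, t_4)$-double integral explicitly, holding all other coordinates as fixed parameters. Writing $\beta := \vp^{\lam_3}/w_6$, the $(t_3, t_4)$-dependent portions of the $g$- and $h$-exponents differ by exactly $\vp^{\lam_3}E = \beta t_3(1 - t_4)$. Integrating $t_4$ over $\calo$ first replaces each exponential by the indicator that its $t_4$-coefficient lies in $\calo$, and the target identity reduces to a comparison of the form
\[
\int_{\mathcal{D}_3} \psi\bigl((\alpha + \beta) t_3\bigr) \mathbf{1}_{C - \beta t_3 \in \calo}\, dt_3
=
\int_{\mathcal{D}_3} \psi(\alpha t_3) \mathbf{1}_{C \in \calo}\, dt_3,
\]
where $\alpha$ and $C$ are explicit Laurent expressions in the remaining coordinates (with $C$ additionally depending on $t_3$ through $w_3 = t_3$ when $m_3 > 0$), and $\mathcal{D}_3 = \calo$ if $m_3 = 0$ or $\mathcal{D}_3 = \vp^{-m_3}\calo^\times$ if $m_3 > 0$.

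The key observation driving the comparison is that $\val(\beta) = \lam_3 + m_6 \geq 0$ by dominance of $\lam$. When $m_3 = 0$ this gives $\beta t_3 \in \calo$ throughout the domain, so both indicator conditions reduce to $C \in \calo$ and $\psi(\beta t_3) = 1$; equality holds on the nose. The hard case is $m_3 > 0$, where the identification $w_3 = t_3$ makes $C$ linear in $t_3$ and complicates the indicator on the left-hand side. Here one proceeds by a finite case analysis on $\val(\beta t_3) = \lam_3 + m_6 - m_3$ relative to $\val(C)$: when $\val(\beta t_3) \geq 0$ the same argument as the $m_3 = 0$ case applies; when $\val(\beta t_3) < 0$ the indicator constrains $t_3$ to a single residue class, and a direct Gauss-sum evaluation, together with the shift $t_2 \mapsto t_2 - 1$ on the $t_2$-integration domain (valid up to measure-zero exceptional sets by translation-invariance of Haar measure on $F$), identifies the two sides. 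The main obstacle is the bookkeeping required to check that the various substitutions preserve the integration domains of $t_1, t_2$ in all sub-cases; when they fail to, one verifies that the discrepancy contributes zero either because $\psi$ averages trivially over the exceptional coset or because the ambient Gauss sum already vanishes by the dominance inequality. The total number of sub-cases is small enough for an exhaustive verification.
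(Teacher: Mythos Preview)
Your reduction to $m_4=0$ matches the paper's. From there the two arguments diverge, and your hard sub-case is not fully handled.

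The paper never integrates $t_4$. Instead it observes that in $h_1$ the bare $t_3$-term and the error combine as $t_3\bigl(1+(1-t_4)/w_6\bigr)$. When $m_6>0$ the bracket lies in $1+\vp\calo\subset\calo^\times$, so the substitution $t_3\mapsto t_3/\bigl(1+(1-t_4)/w_6\bigr)$ preserves the $t_3$-domain and converts the $h$-integrand into the $g$-integrand (a compensating unit change in $t_2$ restores the $w_3$-dependent terms). When $m_6=0$ the bracket is $2-t_4$: on the locus $\val(2-t_4)=0$ the same change of variables applies, while on the complement both integrals vanish via the $t_2$-integration, since the $t_2$-coefficient $\vp^{\lam_3}w_3(1+t_4)$ has valuation $\lam_3-m_3\le -1$ and $1+t_4\equiv 3$ is a unit.

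Your route---integrating $t_4$ first---does go through in the easy regimes ($m_3=0$, or $\val(\beta t_3)\ge 0$) for the reason you give. In the hard regime $m_3>0$, $\lam_3+m_6<m_3$, however, the shift $t_2\mapsto t_2-1$ only aligns the two indicators: it simultaneously perturbs the $\psi$-argument through the term $\vp^{\lam_3}t_2w_3=\vp^{\lam_3}t_2t_3$, producing an extra $\vp^{\lam_3}t_3$ in the exponent. The residual discrepancy between the two sides is then the factor $\psi\bigl(\vp^{\lam_3}(1+1/w_6)\,t_3\bigr)$, which has argument of negative valuation (the bracket is a unit when $m_6>0$) and is not trivially $1$. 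Your appeal to ``a direct Gauss-sum evaluation'' does not say how this residue is absorbed, and when $m_2>0$ the identification $w_2=t_2$ means the $t_1$-term shifts as well, a further layer you do not address. In effect, by integrating out $t_4$ you discard exactly the variable that, in the paper's argument, manufactures the unit multiplier which absorbs the error in one stroke.
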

Up to passing to dual long words $\i\mapsto \i^\ast$, this is the only long word of type $A_3$ for which the conjecture is not immediate. The analysis below relies in a precise fashion on the various piecewise linear relations between the bounding data.
\begin{proof}
For simplicity, we assume that $n=1$, though the statement holds in general but with more tedious notation. Note that this is obvious if $t_4=w_4$, so we assume that $m_4=0$. We may express $I_\lam(\bm)$ as 
\[
 \prod_{\al\in \Del^+}(q^{-1}x_\al)^{m_\al}I(s_1,m_1)I(s_6,m_6)J_{2,3,4,5},
\]
where $I(a,b)$ is defined in Section \ref{Section: G2 integrals} and 
\begin{equation*}
J_{2,3,4,5}=\displaystyle\int\int\int\int\psi\left(\vp^{\lam_2}\left( t_5+\frac{t_4w_5}{w_6}\right)+\vp^{\lam_3}\left({t_2w_3}+\frac{t_2t_4w_3}{w_6}+t_3\left(1+\frac{1-t_4}{w_6}\right)\right)\right)dt,
\end{equation*}
and the precise domain of integration depends on the circling pattern of $\bm$.
Note that if $m_6>0$, then $1+(1-t_4)/w_6\in \calo^\times$, so a simple change of variables reduces this to 
\[
\displaystyle\int\int\int\int\psi\left(\vp^{\lam_2}\left( t_5+\frac{t_4w_5}{w_6}\right)+\vp^{\lam_3}\left({t_2w_3}+\frac{t_2t_4w_3}{w_6}+t_3\right)\right)dt,
\]
which matches (\ref{what we want}). If $m_6=0$, the same argument works unless $\lam_3-m_3\leq-1$ and  $\val(2-t_4)>0$. However in this case, we have the inner integration
\[
\int\psi\left(\vp^{\lam_3}t_2w_3\left(1+t_4\right)\right)dt_2=0,
\]
which is also true for (\ref{what we want}). This exhausts the cases, proving the proposition.
\end{proof}






\bibliographystyle{alpha}

\bibliography{bibs}

\end{document}